\DeclareMathOperator*{\colim}{colim}
\theoremstyle{plain}
  \newtheorem*{thm*}{Theorem}
 \theoremstyle{plain}
\newtheorem{thm}{Theorem}[section]
  \theoremstyle{plain}
  \newtheorem{prop}[thm]{Proposition}
  \theoremstyle{plain}
  \newtheorem{cor}[thm]{Corollary}
  \theoremstyle{plain}
  \newtheorem{lem}[thm]{Lemma}
\theoremstyle{plain}
  \newtheorem{rem}[thm]{Remark}
\theoremstyle{plain}
  \newtheorem{conj}[thm]{Conjecture}
\theoremstyle{plain}
\theoremstyle{plain}
  \theoremstyle{plain}
  \newtheorem{que}[thm]{Question}
    \theoremstyle{plain}
  \newtheorem{exam}[thm]{Example}
\newcommand{\sm}{\setminus}
\newcommand{\st}{\mathrm{star}}
\newcommand{\bbR}{\mathbb{R}}
\def\co{\colon\thinspace}
\newcommand{\injects}{\hookrightarrow}
\newcommand{\homeo}{\cong}
\newcommand{\isom}{\cong}
\newcommand{\leqs}{\leqslant}
\newcommand{\geqs}{\geqslant}
\newcommand{\cross}{\times}
\newcommand{\heq}{\simeq}
\newcommand{\maps}{\longrightarrow}
\newcommand{\inter}[1]{\stackrel{\circ}{#1}}
\newcommand{\Inter}[1]{\textrm{Inter}(#1)}
\newcommand{\srm}[1]{\stackrel{#1}{\maps}}
\newcommand{\srt}[1]{\stackrel{#1}{\to}}
\newcommand{\notsubset}{\nsubseteq}
\newcommand{\e}{\emph}
\newcommand{\xmaps}{\xrightarrow}
\newcommand{\wt}[1]{\widetilde{#1}}
\newcommand{\nsubset}{\notsubset}
\newcommand{\C}{\mathbb{C}}
\newcommand{\R}{\mathbb{R}}
\newcommand{\tr}[1]{\mathrm{tr}(#1)}
\newcommand{\aq}{/\!\!/}
\newcommand{\X}{\mathfrak{X}}
\renewcommand{\hom}{\mathrm{Hom}}
\newcommand{\F}{\mathtt{F}}
\newcommand{\XC}[1]{\mathfrak{X}_{#1}}
\newcommand{\Z}{\mathbb{Z}}
\newcommand{\SLm}[1]{\mathsf{SL}_{#1}}
\newcommand{\GLm}[1]{\mathsf{GL}_{#1}}
\newcommand{\SUm}[1]{\mathsf{SU}_{#1}}
\newcommand{\PSLm}[1]{\mathsf{PSL}_{#1}}
\newcommand{\Um}[1]{\mathsf{U}_{#1}}
\newcommand{\cp}{\C\p}
\newcommand{\p}{\mathsf{P}} 
\newcommand{\codim}{\mathrm{codim}}
   \def\MR#1{}
\begin{document}

\bibliographystyle{amsalpha}

\title[Homotopy of Character Varieties]{Homotopy Groups of Free Group Character Varieties}

\author[C. Florentino]{Carlos Florentino}

\address{Departamento de Matem\'{a}tica, Faculdade de Ci\^{e}ncias da Universidade de Lisboa, Edf. C6, Campo Grande 1749-016 Lisbon, Portugal}
\email{caflorentino@ciencias.ulisboa.pt}

\author[S. Lawton]{Sean Lawton}

\address{Department of Mathematical Sciences, George Mason University,
4400 University Drive,
Fairfax, Virginia  22030, USA}

\email{slawton3@gmu.edu}

\author[D. Ramras]{Daniel Ramras}

\address{Department of Mathematical Sciences, Indiana University-Purdue University Indianapolis, 402 N. Blackford, 
Indianapolis, IN 46202, USA}

\email{dramras@math.iupui.edu}

\thanks{Florentino was supported by the projects  PTDC/MAT-GEO/0675/2012 and EXCL/MAT-GEO/0222/2012, FCT, Portugal.  Lawton and Ramras were supported by Collaboration Grants from the Simons Foundation, USA.  Additionally, Lawton was supported by U.S. National Science Foundation grant DMS-1309376.}

\keywords{Character varieties of free groups, homotopy groups, GIT quotients, simplicial complexes}

\subjclass[2010]{Primary 14B05, 14L24,  55Q05; Secondary 14D20, 14L30, 55U10}

\begin{abstract} Let $G$ be a connected, complex reductive Lie group with maximal compact subgroup $K$, 
and let  $\XC{r}$ denote the moduli space of $G$-- or $K$--valued representations of a rank $r$ free group.  In this article, we develop methods for studying the low-dimensional homotopy groups of these spaces and of their subspaces $\XC{r}^{irr}$ of irreducible representations.

Our main result is that when $G = \GLm{n} (\C)$ or $\SLm{n} (\C)$, the second homotopy group of $\XC{r}$ is trivial.  The proof  depends on a new general position-type result in a singular setting.  This result is proven in the Appendix and may be of independent interest.   

We also obtain new information regarding the homotopy groups of the subspaces $\XC{r}^{irr}$.
Recent work of Biswas and Lawton determined $\pi_1 (\XC{r})$ for general $G$, and we describe $\pi_1 (\XC{r}^{irr})$.  Specializing to the case $G = \GLm{n} (\C)$, we explicitly compute the homotopy groups of the smooth locus  $\XC{r}^{sm} = \XC{r}^{irr}$ in a large range of dimensions, finding that they exhibit Bott Periodicity. 

As a further application of our methods (and in particular our general position result)
we obtain new results regarding centralizers of subgroups of $G$ and $K$, motivated by a question of Sikora.  
 
 Additionally, we use work of Richardson to solve a conjecture of Florentino--Lawton about the singular locus of $\XC{r}$, and we give a topological proof that for $G= \GLm{n} (\C)$ or $\SLm{n} (\C)$, the space $\XC{r}$ is not a rational Poincar\'e Duality Space for $r\geqs 4$ and $n=2$.
\end{abstract}

\maketitle

\tableofcontents

\section{Introduction}
Given an irreducible singular algebraic variety $X$ with singular set $X^{sing}$, what is the relationship between the topology of $X$ and that of its smooth locus $X^{sm} = X \setminus X^{sing}$?  In the case of a smooth manifold $M$ and a smooth submanifold $N\subset M$ of codimension $c$, transversality shows that the inclusion $M\setminus N \injects M$ is $(c-1)$--connected (that is, an isomorphism on homotopy groups $\pi_i$ for $i=0,...,c-2$, and surjective on $\pi_{c-1}$).  However, the corresponding statement for the inclusion $X^{sm}  \injects X$ fails in general.

In this article, we give general local conditions guaranteeing that the inclusion $X^{sm} \injects X$ is $2$--connected, and we prove that these conditions are satisfied for certain character varieties. We go on to show that in some cases, this inclusion is \e{not} $3$--connected despite the singular locus lying in codimension at least 4, illustrating the failure mentioned above.  This involves a calculation of the second homotopy groups of these character varieties and of their smooth loci, and also leads to new results on their homology. 

Our main result, which is proven using these ideas, is as follows:
\begin{thm*}[Theorem~\ref{pi2}] 
Let $G_n = \GLm{n}(\C), \SLm{n}(\C), \SUm{n}$
or $\Um{n}$. Then  $\pi_{2}(\XC{r}(G_n))= 0$, where $\XC{r} (G_n)$ is the character variety 
associated to the free group $\F_r$ of rank $r$ and the group $G_n$.
\end{thm*}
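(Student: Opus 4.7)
My approach combines the Florentino--Lawton deformation retraction, a principal-bundle analysis of the irreducible locus, and the Appendix's singular general-position result. First I would reduce to the compact case via $\XC{r}(G_n)\heq \XC{r}(K_n)$ with $K_n = \Um{n}$ or $\SUm{n}$, and then use the determinant map $\XC{r}(\Um{n}) \to (S^1)^r$, which is a locally trivial fibration with fiber $\XC{r}(\SUm{n})$ (local triviality from locally chosen $n$-th roots on small open sets in $S^1$), to deduce $\pi_2(\XC{r}(\Um{n})) \cong \pi_2(\XC{r}(\SUm{n}))$ from $\pi_i((S^1)^r) = 0$ for $i\geqs 2$. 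An analogous step (or the compact reduction directly) covers $\GLm{n}(\C)$ and $\SLm{n}(\C)$. So it suffices to show $\pi_2(\XC{r}(\SUm{n})) = 0$.

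Next I would analyze the smooth locus $\XC{r}^{sm}=\XC{r}^{irr}$. For $r,n$ large enough that the reducible locus in $\SUm{n}^r$ has real codimension $\geqs 4$ (the small exceptional cases, such as $r=n=2$ where $\XC{2}(\SUm{2})$ is a $3$-ball, are handled by classical direct computations), standard transversality gives $\pi_i(\hom^{irr}(\F_r,\SUm{n})) = \pi_i(\SUm{n}^r) = 0$ for $i=1,2$. Combined with the principal $\SUm{n}/Z(\SUm{n})$-bundle $\hom^{irr}\to \XC{r}^{irr}$ and its long exact sequence in homotopy, this identifies $\pi_2(\XC{r}^{irr}(\SUm{n}))$ with a subgroup of $\pi_1(\SUm{n}/Z(\SUm{n}))=\Z/n$. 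The Appendix's singular general-position result, applied to the singular locus of $\XC{r}$ (also of codimension $\geqs 4$ here), then shows that the inclusion $\XC{r}^{sm}\injects \XC{r}$ is $2$-connected, so in particular $\pi_2(\XC{r}^{irr}(\SUm{n})) \surjects \pi_2(\XC{r}(\SUm{n}))$.

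The hard part is to show that the remaining subgroup of $\Z/n$ dies when mapped into $\pi_2(\XC{r}(\SUm{n}))$. A generator corresponds, under the connecting map, to a loop in $\SUm{n}/Z$ that lifts to a path from $I$ to $\zeta I$ in $\SUm{n}$ with $\zeta$ a primitive $n$-th root of unity, and is realized as an $S^2$-family in $\XC{r}^{irr}$ obtained by clutching a disk in $\hom^{irr}$ with this fiber loop. To contract such an $S^2$ in the full character variety, I would deform its disk representative through $\hom(\F_r,\SUm{n})$ toward the trivial representation, whose $\SUm{n}$-orbit collapses to a single point. Such a deformation necessarily enters the reducible (singular) locus, and the key technical issue, which is exactly what the Appendix's general-position result is designed to handle, is that the resulting $3$-parameter family descends continuously to a $3$-disk in $\XC{r}(\SUm{n})$ bounding the given sphere. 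A possible alternative approach would invoke simple-connectivity of $\XC{r}(\SUm{n})$ (from $1$-connectedness of $\SUm{n}^r$ and connectedness of $\SUm{n}$) together with Hurewicz, reducing the problem to the vanishing of $H_2(\XC{r}(\SUm{n});\Z)$, which one might attack by equivariant methods applied to the conjugation action of $\SUm{n}$ on $\SUm{n}^r$, then separately eliminating the possible $\Z/n$-torsion.
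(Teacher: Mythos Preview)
Your setup is solid and matches the paper's: the reduction to $\SUm{n}$ via the deformation retraction and the fibration over $(S^1)^r$, the identification $\pi_2(\XC{r}(\SUm{n})^{irr}) \cong \Z/n\Z$ from the principal $P\SUm{n}$--bundle over the irreducible locus, and the surjection $\pi_2(\XC{r}^{irr}) \surjects \pi_2(\XC{r})$ from the $2$--connectedness statement (Proposition~\ref{2-connect}). The paper does exactly this much.

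The gap is in your ``hard part.'' Your plan to contract the generating $2$--sphere by building a $3$--parameter family and invoking the Appendix's general-position result does not work: Proposition~\ref{trans-prop} is stated and proved only for compact manifolds of dimension at most $2$, and the paper explicitly poses its extension to higher dimensions as an open question. So you cannot push a $3$--disk off the singular locus this way. Your geometric sketch (deforming toward the trivial representation) is also imprecise: the boundary $\partial D^2$ of your clutching disk lies in a single $P\SUm{n}$--orbit at time $0$, but there is no reason it remains in a single orbit during the contraction, so the family does not obviously descend to a $3$--disk in $\XC{r}$. Your alternative via Hurewicz just restates the problem, since the possible $\Z/n$--torsion in $H_2$ is exactly the image of $\pi_2(\XC{r}^{irr})$.

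The paper's argument for this last step is entirely different and quite elegant. First, using the split projections $\F_r \to \F_{r'}$ together with the surjections from $\pi_2(\XC{r}^{irr})\cong\Z/n\Z$ and naturality of the boundary map, one shows that the finite groups $\pi_2(\XC{r}(\SUm{n}))$ have order independent of $r$ (for $(r-1)(n-1)\geqs 2$). It therefore suffices to kill $\pi_2$ for a single value of $r$. For $n=2$ one uses the known homotopy equivalence $\XC{3}(\SUm{2})\heq S^6$. For $n\geqs 3$ one works at $r=2$: the $\SUm{n}$--equivariant maps $\mu(A,B)=(A,B,AB)$ and $\eta(A,B,C)=(AB,C)$ are both split, hence induce isomorphisms on $\pi_2$ of the character varieties, but their composite $[A,B]\mapsto[AB,AB]$ factors through the contractible Weyl alcove $\XC{1}(\SUm{n})$. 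This forces $\pi_2(\XC{2}(\SUm{n}))=0$. The key idea you are missing is this stabilization-in-$r$ trick combined with the factorization through rank $1$.
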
 
While character varieties of free groups and their topology have been studied extensively, previous computational results have focused on fundamental groups \cite{BiLa, BiLaRa, Lawton-Ramras} or on rational homology in the special cases $G = \SUm{2}$~\cite{Ba0}.  Theorem~\ref{pi2} appears to be the first systematic calculation of higher homotopy (excepting Steinberg's results for the case of $G\aq G$; see Remark~\ref{Ste-rmk}).

We now describe the  varieties studied in this paper.  Consider a rank $r$ free group $\F_r$, a Lie group $G$, and the set of group homomorphisms $\hom(\F_r,G)$. Being naturally in bijection with the Cartesian product $G^r$, $\hom(\F_r,G)$ has a natural smooth manifold structure. The group $G$ acts analytically on $\hom(\F_r,G)$ by conjugation, but the orbit space, $\hom(\F_r,G)/G$, is not generally Hausdorff when $G$ is not compact. 
We consider instead the subspace of closed orbits, also called the {\it polystable} locus and denoted by 
$\hom(\F_r,G)^*$ (it is generally not a subvariety, only a constructible set), and we study the corresponding quotient 
$\XC{r}(G):=\hom(\F_r,G)^*/G$, often referred to as the $G$--{\it character variety} of $\F_r$.

When $G$ is compact, $\XC{r}(G)$ coincides with the orbit space $\hom(\F_r,G)/G$, but when $G$ is a connected, complex reductive affine algebraic group (for short, a connected reductive $\C$--group), $\XC{r}(G)$ is homeomorphic to the (affine) Geometric Invariant Theory (GIT) quotient $\hom(\F_r,G)\aq G$ equipped with the Euclidean topology (for a detailed proof, see~\cite[Theorem 2.1]{FlLa3})\footnote{In fact, $\X_r(G)$ is the categorical quotient in the category of affine varieties, Hausdorff spaces or complex analytic varieties \cite{Lu2,Lu3}.}. 
Despite its name, it is not always an algebraic set nor does it always parametrize traditional characters, although whenever $G$ is a connected reductive $\C$--group, GIT implies $\XC{r}(G)$ is an irreducible algebraic set (a variety), and for some classical groups, it does parametrize characters (see \cite{Si5}, or Appendix A in \cite{FlLa2b}).  We note that there is a deformation retraction from the non-Hausdorff
space $\hom(\F_r, G)/G$ to $\XC{r} (G)$ (see Proposition \ref{def-retr}), so that our results on homotopy and homology apply to $\hom(\F_r, G)/G$ as well.

Now let $G$ be a connected reductive $\C$--group.   Recall that an affine algebraic group is reductive if and only if its radical is a torus.  It is well known that an affine algebraic $\C$--group is reductive if and only if it is the complexification of a maximal compact subgroup $K < G$.  The derived subgroup $DG = [G, G]$ is the maximal semisimple subgroup of $G$, and is a connected reductive $\C$--group with maximal compact subgroup $DK = [K, K]$.
 
Since $\XC{r}(G)$ is an irreducible algebraic set, it is path connected, and its zeroth homotopy group, $\pi_0(\XC{r}(G))$, is trivial.  In \cite{BiLa}, the fundamental group 
$\pi_1(\XC{r}(G))$ is shown to be isomorphic to $\pi_1(G/DG)^r$.  By \cite{FlLa}, these results remain true when $G$ is instead a compact connected Lie group.

In this paper we begin the study of higher homotopy groups of these moduli spaces and of 
their irreducible loci $\XC{r}(G)^{irr}$.
The irreducible locus $\XC{r}(G)^{irr}$ (a representation $\rho:\F_r \to G$ is irreducible if its image $\rho(\F_r)$ is not contained in a proper parabolic subgroup of $G$) also has a natural GIT interpretation.
Indeed, $\rho$ is irreducible if and only
if it is a GIT stable point of the $G$-action on the representation
space (see \cite{CaFl}). Therefore, $\XC{r}(G)^{irr}$ is actually the moduli space of stable
representations\footnote{Here we use the slight generalization of the notion of stability,
in affine GIT, introduced by Richardson \cite{Ri}.}  of $\F_r$ into $G$.  Such moduli spaces have been studied extensively for \e{closed surface groups} (analogous representation spaces where $\F_r$ is replaced by $\pi_1(X)$ or a certain central extension, for a compact Riemann surface $X$). 
Higher homotopy groups of these spaces were first studied in~\cite{BGG, DU}, using gauge theoretic methods. For $G = \GLm{n}(\C), \SLm{n}(\C), \SUm{n}$, or $\Um{n}$ we establish a periodicity result 
for  $\pi_* \XC{r}(G)^{irr}$ 
(Theorem~\ref{periodicity}) analogous to~\cite[Theorem 4.2(3)]{BGG} and~\cite[Theorem 3.1]{DU}.  All three of these results establish isomorphisms between the homotopy groups of a moduli space of stable bundles and the homotopy of an associated gauge group.
In the case $G=\Um{n}$, Lawson \cite{Law} showed that  the \e{stabilized} character variety $\colim_{n\to \infty} \XC{r} (\Um{n})$ is homotopy equivalent to $(S^1)^r$, but his methods do not give information about the spaces $\XC{r}(\Um{n})$ themselves.  

We begin in Section \ref{charvar-sec} with general relationships between $\XC{r}(G)$, $\XC{r}(DG)$, $\XC{r}(K)$, $\XC{r}(DK)$ and their various loci:  irreducible, reducible, good, smooth, and singular.  
In Section~\ref{fibration}, we use a result from \cite{BiLaRa} to show that $$\pi_k(\XC{r}(G))\cong\pi_k(\XC{r}(DG))\cong\pi_k(\XC{r}(K))\cong\pi_k(\XC{r}(DK)),$$ for $k\geqs 2$.  Next, Section \ref{sah} shows that 
$$\pi_1 (\XC{r} (G)^{irr}) \isom \pi_1 (G/DG)^r \times \pi_1 (\XC{r} (DG)^{irr}),$$ 
and that $\pi_1 (\XC{r} (DG)^{irr})$ is a quotient of the finite Abelian group $\pi_1 (DG)^r$.  

In Section \ref{linear-sec}, we specialize to the case where $G = \GLm{n}, \SLm{n}, \Um{n}$, or $\SUm{n}$.  In these cases, results in \cite{FlLa2} identify the smooth locus $\XC{r}(G)^{sm}$ with the locus of irreducible characters. 
After a close analysis of the reducible locus, we prove that $\pi_2 (\XC{r} (G)^{sm})$ is isomorphic to $\mathbb{Z}/n\mathbb{Z}$.  We use Bott Periodicity for $G$ to calculate $\pi_k (\XC{r} (G)^{sm})$ in a range of dimensions.
Going further, we show that the natural map $\XC{r}(G)^{sm}\hookrightarrow \XC{r}(G)$ is $2$--connected, and we deduce from this that  $\pi_{2}(\XC{r}(G)) = 0$ for all $r$.   

Our connectivity result is based on the intuition that maps from 2--spheres can be homotoped off of the singular locus, due to its high codimension.  However, the ambient space $\XC{r}(G)$ is singular, so we need a singular analogue of this general position argument.  Such a result is stated in Proposition \ref{trans-prop} and proved in the Appendix, and may be of independent interest.   While the hypotheses of this result do not explicitly refer to codimension, we use codimension arguments, together with the Conner Conjecture (as proven by Oliver \cite{Oliver}) to verify the hypotheses for the case of $\XC{r}(G)$.  

As an application of our topological methods to the the study of Lie groups (Section \ref{app-sec}), we show that if $\pi_1(DG)\not=1$, there exists a finitely generated subgroup $H\leqs G$ with disconnected $PG$--centralizer.  Sikora asked whether this was always true for connected reductive
$\C$--groups (excepting $\SLm{n}$ and $\GLm{n}$) with the additional assumption that $H$ be irreducible [Sik12, Question 19].  Our topological approach provides a new viewpoint on the problem.
 
As our study of homotopy demands attention to the singular locus, in Section \ref{sing-sec} we resolve a conjecture of Florentino-Lawton (see \cite{FlLa2}) that states, in part, that the reducible locus in $\XC{r}(G)$ is always singular for any reductive $G$ (not just for $\SLm{n}$ or $\GLm{n}$).
This relies on work of Richardson (see \cite{Ri}) concerning the case of semisimple $G$.  This further shows that whenever $G$ does not have property CI (see Section \ref{sing-sec} for the definition) then the irreducible locus $\XC{r} (G)^{irr}$ has a non-empty orbifold singular locus (the only known CI groups are $\SLm{n}$ and $\GLm{n}$, and many others are known  not to be CI). 
 
In Section \ref{PP}, we analyze the structure of the Poincar\'e polynomial of $\XC{r} (\SUm{2})$ (as computed in \cite{Ba0}), and give a new proof that these spaces are generally  not manifolds with boundary.  This was first established using algebro-geometric arguments in~\cite{FlLa2}\footnote{Section 8 was originally included as Appendix B in \cite{FlLa2b}.}.  In fact, we prove the stronger result that $\XC{r} (\SUm{2})$ is  generally not a rational Poincar\'e Duality Space.

\section*{Acknowledgments}
Some of this paper was completed at the Centre de Recerca Matem\`atica, Barcelona (Summer 2012) and some of it was completed at the Institute for Mathematical Sciences, National University of Singapore (Summer 2014).  The first two authors thank them both for their hospitality, and acknowledge support from U.S. National Science Foundation grants DMS 1107452, 1107263, 1107367 ``RNMS: GEometric structures And Representation varieties" (the GEAR Network).  The second author also thanks the University of Chicago for hosting him for a short stay in November 2009 where some of the questions addressed in this paper were suggested during the Geometry/Topology Seminar.  We also thank Oscar Garc\'ia-Prada, Bill Goldman, and Juan Souto for helpful discussions, and we thank Geoffroy Horel and Oscar Randall-Williams for helpful discussions on MathOverflow \cite{MO-Un}.  Lastly, we thank an anonymous referee.

\section{Free group character varieties}\label{charvar-sec}
In this section we collect some general facts about free group character varieties that will be used in later sections.  The reader may wish to skip this section and refer back to it as needed in the sequel.

Let $\F_r=\langle e_1,...,e_r\rangle$ be the free group on $r$ generators $e_1,...,e_r$, and let $G$ be either a connected reductive $\C$--group, or a connected compact Lie group\footnote{Given \cite{CFLO} we expect some of our results to generalize to the setting of non-compact real reductive groups.  See Conjecture \ref{conj} for example.}.   In this setting $G$ is always a linear algebraic group.  As in the introduction $DG = [G,G]$ is the derived subgroup.  Let $PG$ be the quotient of $G$ by its center $Z(G)$.    Then 
\begin{equation} \label{decomp}G \isom (DG \cross T)/F, \end{equation}
where $T \leqs Z(G)$ is a central torus in $G$ and the finite group $F = DG\cap T$ 
acts diagonally on $DG \cross T$. As shown in \cite{Lawton-Ramras}, this implies 
\begin{equation}\label{diag}\XC{r}(G)\cong \XC{r}(DG)\times_{F^r}T^r,\end{equation}
where the notation on the right indicates that we mod out the diagonal action of $F^r$ on 
$\XC{r}(DG) \cross T^r$, with $F^r$ acting by coordinate-wise multiplication on both $\XC{r}(DG)$ and $T^r$.

When $G$ is a reductive $\C$--group, we call a representation $\rho\in\hom(\F_r, G)$ \e{irreducible} if the image $\rho(\F_r)$ is not contained in a proper parabolic subgroup of $G$.  
By \cite[Prop. 15]{Si4}, a representation $\rho \in \hom(\F_r, G)$ is irreducible if and only if its stabilizer in $PG$ is finite, which means that $\rho$ is stable in the affine GIT sense (see \cite[Prop. 5.11]{CaFl}).  We denote the set of irreducible representations by $\hom(\F_r,G)^{irr} \subset \hom(\F_r, G)$, and its complement $\hom(\F_r,G)^{red}$ is the set of reducible representations\footnote{We note that when $G$ is Abelian, $\hom(\F_r,G)^{irr} = \hom(\F_r, G)$.}.  As shown in \cite{Si4},  $\hom(\F_r,G)^{red}$ is an algebraic subset of $\hom(\F_r, G)$.  If $K$ is a connected compact Lie group with complexification $G$, then $\XC{r} (K)$ naturally embeds in $\XC{r} (G)$ by \cite[Theorem 4.3]{FlLa-quiver}.
We define the set of irreducible representations to be $\hom(\F_r, K)^{irr} = \hom(\F_r, K) \cap \hom(\F_r, G)^{irr}$, and similarly for reducibles; since $K$ is a real algebraic subset of $G$, we see that $\hom(\F_r, K)^{red}$ is a real algebraic subset of $\hom(\F_r, K)$.

If $G$ is a reductive $\C$--group, then $\rho$ is {\it completely reducible} if for every proper parabolic $P$ containing $\rho(\F_r)$, there is a Levi subgroup $L<P$ with $\rho(\F_r)< L$.  (Note that irreducible representations are, vacuously, completely reducible.)
By \cite{Si4}, a representation $\rho \co \F_r \to G$ is completely reducible if and only if it is polystable (has a closed adjoint orbit), so the set $\hom(\F_r,G)^{irr} \subset \hom(\F_r, G)$ of irreducibles lies inside the set $\hom(\F_r, G)^*$ of closed orbits.  
Let $\XC{r}(G)^{irr} \subset \XC{r} (G)$ denote the conjugation quotient of $\hom(\F_r,G)^{irr}$.  Also, let $\XC{r}(G)^{red}=\XC{r}(G)- \XC{r}(G)^{irr}$ be the {\it reducible locus}; it is a subvariety (see \cite{Si4}).   Since every completely reducible representation has a closed orbit, we conclude that 
$$\XC{r}(G)^{red}\cong (\hom(\F_r,G)^{red}\cap \hom(\F_r, G)^*)/G.$$  

If $G$ is either a connected reductive $\C$--group or a connected compact Lie group, then (following  \cite{JM}) we define the {\it good locus} to be the subspace
$$\hom(\F_r,G)^{good}\subset\hom(\F_r,G)^{irr}$$
of representations whose $PG$--stabilizer is trivial, and we define $\XC{r}(G)^{good} \subset \XC{r} (G)$ to be the quotient $\hom(\F_r,G)^{good}/G$.

When $G$ is a reductive $\C$--group, $\XC{r}(G)$ is an algebraic variety and we define $\XC{r}(G)^{sm}:=\XC{r}(G)-\XC{r}(G)^{sing}$, where $\XC{r}(G)^{sing}$ is the subvariety of singular points.   
If $K < G$ is a maximal compact subgroup, we define $\XC{r} (K)^{sing} = \XC{r} (G)^{sing} \cap \XC{r} (K)$, and similarly for $\XC{r} (K)^{sm}$.  
As shown in \cite{FlLa2} and more generally in \cite{Si4}, $\XC{r}(G)^{good}$ is a submanifold of the complex manifold $\XC{r}(G)^{sm}$ and $\X_r(G)^{irr}$ is an orbifold in $\XC{r}(G)$.  Moreover, both $\hom(\F_r,G)^{irr}$ and $\hom(\F_r,G)^{good}$ are Zariski open (non-empty for $r\geqs 2$) subspaces of $G^r$, and as such are smooth manifolds too.  By \cite{BiLa}, these latter spaces are in fact bundles over the former:

\begin{lem}[Lemma 2.2 in \cite{BiLa}]\label{lem22BL}
Let $G$ be a connected reductive $\C$--group $($respectively a connected compact Lie group$)$.
Then $$\hom(\F_r,G)^{good}\to \XC{r}(G)^{good}$$
is a principal $PG$--bundle, and $$\hom(\F_r,G)^{irr}\to \XC{r}(G)^{irr}$$ is a $PG$--orbibundle in the \'etale topology $($respectively the usual topology$)$.
\end{lem}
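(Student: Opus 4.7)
The plan is to reduce both claims to the Luna \'etale slice theorem in the reductive $\C$--group setting, and to the differentiable slice theorem for compact Lie group actions in the compact setting. First I would record the basic algebraic setup: since $Z(G)$ acts trivially by conjugation, the $G$--orbits and $PG$--orbits on $\hom(\F_r,G)$ coincide, so $\XC{r}(G)^{good}$ and $\XC{r}(G)^{irr}$ are also the $PG$--orbit spaces of the good and irreducible loci. The $PG$--stabilizer of $\rho$ equals $G_\rho/Z(G)$, which is trivial on the good locus and finite on the irreducible locus by definition. The good and irreducible loci are smooth, being Zariski open subsets of the smooth variety $G^r$.

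Second, I would establish that orbits are closed. In the complex reductive case, irreducibility is equivalent to GIT--stability (as noted in the excerpt, citing \cite{CaFl}), and stable points have closed orbits. In the compact case, all $K$--orbits on $\hom(\F_r,K)$ are automatically closed by compactness of $K$. This also ensures, by Matsushima's criterion in the complex case, that the stabilizer $G_\rho$ is a reductive subgroup of $G$ at every irreducible $\rho$.

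Third, I would apply the appropriate slice theorem at a point $\rho$ of the relevant locus. In the compact case, the differentiable slice theorem produces a $K_\rho$--invariant open neighborhood of $\rho$ in $\hom(\F_r,K)$ that is $K$--equivariantly diffeomorphic to $K \times^{K_\rho} S$ for a suitable slice $S$. Passing to the $Z(K)$--quotient and then to $PK$--orbits yields a local chart of the form $(PK/\Gamma_\rho) \times U$ around the image of $\rho$ in $\XC{r}(K)^{irr}$, where $\Gamma_\rho = K_\rho/Z(K)$ is finite. When $\Gamma_\rho$ is trivial (the good case), this is exactly the local trivialization of a principal $PK$--bundle in the usual topology; in general, it is the defining local chart of a $PK$--orbibundle. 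In the complex reductive case, Luna's \'etale slice theorem gives a $G_\rho$--invariant locally closed smooth subvariety $S \ni \rho$ with a strongly \'etale $G$--equivariant morphism $G \times^{G_\rho} S \to \hom(\F_r,G)^{irr}$ onto a $G$--invariant open neighborhood of $G\cdot \rho$; taking the GIT quotient by $Z(G)$ and then by $PG$ converts this into an \'etale chart $PG \times^{G_\rho/Z(G)}(S/Z(G)) \to \XC{r}(G)^{irr}$. On the good locus this is an \'etale trivialization of the principal $PG$--bundle, and on the irreducible locus it presents the map as a $PG$--orbibundle in the \'etale topology.

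The main obstacle is verifying the hypotheses of Luna's theorem throughout the irreducible locus and translating its conclusion into the precise orbibundle language used here. Closedness of the orbit comes from GIT--stability, reductivity of $G_\rho$ comes from Matsushima's criterion applied to this closed orbit, and smoothness of the ambient space is immediate from Zariski openness in $G^r$; the remaining bookkeeping consists of descending the slice model through $Z(G)$ to obtain the stated $PG$--orbibundle structure, and checking that the compact and algebraic versions match up under the inclusion $\XC{r}(K) \hookrightarrow \XC{r}(G)$.
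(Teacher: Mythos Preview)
Your approach via slice theorems is correct and is precisely the route taken in \cite{BiLa} for the complex reductive case; the paper itself does not reprove the lemma but simply cites \cite{BiLa} and remarks that the compact case is analogous. One small correction: in the compact case you assert that the $PK$--stabilizer is finite on the irreducible locus ``by definition,'' but in this paper irreducibility for $K$--valued representations is \emph{defined} via the complexification (i.e., $\rho\co \F_r\to K$ is irreducible iff it is irreducible in $\hom(\F_r,G)$), so the finiteness of $\mathrm{Stab}_K(\rho)/Z(K)$ is not immediate---it is exactly the content of Proposition~\ref{irred-stab}, which the paper flags as the missing ingredient for the compact orbibundle statement. With that adjustment, your argument goes through.
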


We note that \cite[Lemma 2.2]{BiLa} refers only to the reductive case.  The argument in the compact case is the same; for the orbibundle statement one uses Proposition~\ref{irred-stab} below.

The isomorphism (\ref{decomp}) implies that if $\rho\in \hom(\F_r, DG)$, the adjoint orbits $\{g\rho g^{-1} \,:\, g\in DG\}$ and $\{g\rho g^{-1} \,:\, g\in G\}$ coincide, and it follows that 
there are natural inclusions $\XC{r} (DG) \injects \XC{r} (G)$ and $\XC{r} (DG)^{irr} \injects \XC{r} (G)^{irr}$.

\subsection{Stabilizers and irreducible representations in compact groups}

In this section, $K$ will denote a  compact, connected Lie group with complexification $G = K_\C$.  We will regard $K$ as a subgroup of $G$.  Recall that there exists a Cartan decomposition 
\begin{equation}\label{CD}K\cross \exp (\mathfrak{p}) \srm{\isom} G,\end{equation}
 $(k, \exp(p)) \mapsto k \exp(p)$, where $\mathfrak{p} \leqs \mathrm{Lie}(G)$ is the $(-1)$--eigenspace of a Cartan involution $\theta$ on $\mathrm{Lie} (G)$.

\begin{lem} \label{centralizer-SDR} Let $G$ be a connected reductive $\C$--group with maximal compact subgroup $K \leqs G$.  Then for each subgroup $H \leqs K$,  the centralizer $C_G (H)$ deformation retracts to $C_K (H)$. 
Moreover, the inclusion $C_K (H) \to C_G(H)$ induces a homotopy equivalence 
$$C_K (H)/Z(K) \srm{\simeq} C_G(H)/Z(G).$$
\end{lem}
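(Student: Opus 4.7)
The plan is to use the Cartan decomposition $(k,p)\mapsto k\exp(p)$ to give an explicit product structure on $C_G(H)$, from which both statements follow by linear contraction in the $\exp(\mathfrak{p})$-factor.

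For the first statement, I would take $g=k\exp(p)\in G$ with $k\in K$ and $p\in\mathfrak{p}$, and compute, for $h\in H\leqs K$,
$$hgh^{-1}=(hkh^{-1})\exp(\mathrm{Ad}(h)p),$$
using that $\mathrm{Ad}(K)$ preserves both $K$ and $\mathfrak{p}$ (since the Cartan involution $\theta$ is $K$-equivariant). By uniqueness of the Cartan decomposition, $g\in C_G(H)$ if and only if $hkh^{-1}=k$ and $\mathrm{Ad}(h)p=p$ for every $h\in H$, i.e.\ $k\in C_K(H)$ and $p\in\mathfrak{p}^H$. Thus \eqref{CD} restricts to a homeomorphism
$$C_K(H)\times\mathfrak{p}^H\xrightarrow{\;\cong\;} C_G(H),\qquad (k,p)\longmapsto k\exp(p),$$
and the straight-line contraction $(k,p)\mapsto(k,(1-t)p)$ gives a deformation retraction of $C_G(H)$ onto $C_K(H)$.

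For the second statement, I would first apply the above analysis with $H=K$ to get $C_G(K)=Z(K)\times\exp(\mathfrak{p}^K)$, and then identify this with $Z(G)$. The inclusion $\supseteq$ is immediate. For the reverse, an element $p\in\mathfrak{p}^K$ satisfies $[\mathfrak{k},p]=0$, which combined with $\mathfrak{p}=i\mathfrak{k}$ (valid because $G=K_\C$) yields $p\in Z(\mathfrak{g})\cap\mathfrak{p}$, so $\exp(p)\in Z(G)$; similarly $Z(K)\subset Z(G)$ because $\mathrm{Ad}(z_K)$ is trivial on $\mathfrak{k}$ and hence on $i\mathfrak{k}=\mathfrak{p}$. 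Thus $Z(G)=Z(K)\cdot\exp(\mathfrak{p}^K)$ with $\mathfrak{p}^K\subseteq Z(\mathfrak{g})$.

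Next I would compute the $Z(G)$-action on $C_G(H)$ in Cartan coordinates. Writing $z=z_K\exp(p_Z)$ with $z_K\in Z(K)$ and $p_Z\in\mathfrak{p}^K\subset Z(\mathfrak{g})$, and $g=k\exp(p)$ with $(k,p)\in C_K(H)\times\mathfrak{p}^H$, centrality of $p_Z$ in $\mathfrak{g}$ gives $\exp(p_Z)\exp(p)=\exp(p_Z+p)$ and $z_K$ commutes with $k$, so
$$z\cdot k\exp(p)=(z_Kk)\exp(p_Z+p).$$
Since $\mathfrak{p}^K\subseteq\mathfrak{p}^H$, this action preserves the product decomposition and is diagonal: $Z(K)$ acts on $C_K(H)$ by left multiplication while $\mathfrak{p}^K$ acts on $\mathfrak{p}^H$ by translation. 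Consequently
$$C_G(H)/Z(G)\;\cong\;\bigl(C_K(H)/Z(K)\bigr)\times\bigl(\mathfrak{p}^H/\mathfrak{p}^K\bigr),$$
and the second factor is a real vector space, hence contractible. Including $C_K(H)/Z(K)$ as the slice at $0$ in the second factor is therefore a homotopy equivalence, which is precisely the map induced by $C_K(H)\hookrightarrow C_G(H)$.

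The main point requiring care is the identification $Z(G)=Z(K)\cdot\exp(\mathfrak{p}^K)$, and in particular the inclusion $\mathfrak{p}^K\subseteq Z(\mathfrak{g})$, which relies essentially on $G$ being the complexification of $K$ (so $\mathfrak{g}=\mathfrak{k}\oplus i\mathfrak{k}$). Once this is in place, everything else is bookkeeping with the Cartan decomposition.
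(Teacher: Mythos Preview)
Your proof is correct. For the first statement, you use the same Cartan decomposition retraction as the paper, but your computation is cleaner: you compute $hgh^{-1}$ directly, whereas the paper computes $k^{-1}hkh^{-1}\exp(\mathrm{Ad}(h)p)$ and then invokes uniqueness of the Cartan decomposition together with injectivity of $\exp$ on $\mathfrak{p}$ separately.

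For the second statement, the approaches genuinely diverge. The paper argues abstractly via a comparison of fibration sequences: the map $C_K(H)/Z(K)\to C_G(H)/Z(K)$ is a homotopy equivalence because the total spaces are equivalent with the same fiber $Z(K)$, and then $C_G(H)/Z(K)\to C_G(H)/Z(G)$ is a homotopy equivalence because its fiber $Z(G)/Z(K)$ is contractible (as $Z(K)$ is the maximal compact of $Z(G)$). Your approach is instead entirely explicit: you identify $Z(G)=Z(K)\cdot\exp(\mathfrak{p}^K)$ with $\mathfrak{p}^K\subset Z(\mathfrak{g})$, show the $Z(G)$-action on $C_G(H)\cong C_K(H)\times\mathfrak{p}^H$ is diagonal, and read off $C_G(H)/Z(G)\cong (C_K(H)/Z(K))\times(\mathfrak{p}^H/\mathfrak{p}^K)$. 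Your argument yields more---an actual product decomposition rather than just a homotopy equivalence---at the cost of using the specific structure $\mathfrak{p}=i\mathfrak{k}$ coming from $G=K_\C$; the paper's fibration argument would survive in settings where this identification is less transparent.
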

\begin{proof}
 We claim that the map
$C_G (H) \cross [0,1]\to C_G (H)$, $(k \exp(p), t) \mapsto k \exp((1-t)p)$, is a deformation retraction onto $C_K (H)$.  
We need to prove that for each $t\in [0,1]$  and each $k \exp(p)\in C_G (H)$, we have 
$k \exp(tp)\in C_G (H)$ and $k = k\exp(0) \in C_K (H)$.

First we check that if  $k \exp(p)\in C_G (H)$ for some $k\in K$, $p\in \mathfrak{p}$, then $k \in C_K (H)$.  Say $h\in H$.  We have
$$k \exp(p) h \exp(-p) k^{-1} = h,$$
so 
$$k^{-1} h k = \exp(p) h \exp(-p) h^{-1} h = \exp (p) \exp( -\mathrm{ad}(h) p) h.$$
Hence
$$k^{-1} h k h^{-1} \exp( \mathrm{ad}(h) p) =  \exp (p).$$
But $k^{-1} h k h^{-1}\in K$ and $\exp( \mathrm{ad}(h) p) \in \mathfrak{p}$ (because the adjoint action of $K$ fixes the $(-1)$--eigenspace $\mathfrak{p}$ of $\theta$), so by bijectivity of the Cartan decomposition map (\ref{CD}), we must have $k^{-1} h k h^{-1} = 1$ and $\exp(\mathrm{ad}(h) p) = \exp(p)$.  Since $h\in H$ was arbitrary, we conclude that $k\in C_K (H)$.

Next, we must show that for each $t\in [0,1]$ and each $k\in K$, $p\in \mathfrak{p}$ with $k\exp(p) \in C_G (H)$, 
 we have $k \exp(tp)\in C_G (H)$.  Fix $t\in [0,1]$ and $h\in H$.
 The above computation shows that $k\in C_K (H)$ and that
$\exp(\mathrm{ad}(h) p) = \exp(p)$.  Since the exponential map is injective on $\mathfrak{p}$, we have $\mathrm{ad} (h) p = p$, and hence $\exp(t\mathrm{ad} (h)p) = \exp(tp)$.
 Following the above computation in reverse, we have 
 $$\exp(tp) = k^{-1} h k h^{-1} \exp(t\mathrm{ad} (h)p)$$
 so
 $$ k^{-1} h k = \exp(tp) \exp(-\mathrm{ad} (h) tp) h = \exp(tp) h \exp(- tp)$$
 and hence
 $$h = k  \exp(tp) h \exp(- tp) k^{-1},$$
 showing that $k  \exp(tp) \in C_G (H)$, as desired.
It follows that the inclusion $C_K (H) \injects C_G (H)$ is a homotopy equivalence.
To see that the natural map $f\co C_K (H)/Z(K) \to C_G(H)/Z(G)$ is a homotopy equivalence, 
consider the commutative diagram,
$$\xymatrix{ 	Z(K) \ar@{=}[r] \ar[d] & Z(K)\ar[d]\\
			C_K (H) \ar[r]^\heq \ar[d] & C_G (H) \ar[d]\\
			C_K (H)/Z(K) \ar[r] & C_G(H)/Z(K) \ar[r] &C_G (H)/Z(G),}$$
in which the composite of the bottom row is exactly $f$.						
The first two columns in this diagram are (surjective) fibration sequences, so we conclude that the first map in the bottom row is a homotopy equivalence.  The kernel of the homomorphism $C_G(H)/Z(K) \to C_G (H)/Z(G)$ is $Z(G)/Z(K)$, which is contractible since $Z(K)$ is the maximal compact subgroup of $Z(G)$.  Therefore $C_G(H)/Z(K) \to C_G (H)/Z(G)$ is a homotopy equivalence.
\end{proof}

Note that the $K$--orbit of a representation $\rho\co \F_r \to K$ is always closed, since $K$ is compact.  In fact, the $G$--orbit of such a representation is also closed, as shown in \cite{FlLa}; that is, $K$--valued representations in $\hom(\F_r, G)$ are always polystable.
By \cite[Theorem 30]{Si4}, polystability is equivalent to complete reducibility, which yields the following result.

\begin{lem}\label{K-red}  If  $\rho\co \F_r \to K$ has image contained in a parabolic subgroup $P<G$, then there exists a Levi subgroup $L< P$ with $\rho (\F_r) < L$.
Moreover, $\rho$ is reducible if and only if there exists a parabolic subgroup $P< G$ and a Levi subgroup $L < P$ such that $\rho (\F_r) < L$.  
\end{lem}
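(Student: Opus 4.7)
The plan is to chain together the two results cited in the paragraph immediately preceding the lemma. By \cite{FlLa}, any $K$-valued representation $\rho\co \F_r \to K \injects G$ has closed $G$-orbit, hence is polystable; and by \cite[Theorem 30]{Si4}, polystability is equivalent to complete reducibility. Therefore every $K$-valued $\rho$ is completely reducible when regarded as a representation into $G$.

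Given this, the first assertion of the lemma is a direct unpacking of the definition of complete reducibility: that definition asserts precisely that for \emph{every} proper parabolic $P < G$ containing $\rho(\F_r)$, there exists a Levi subgroup $L < P$ with $\rho(\F_r) < L$. Applied to the $P$ given in the hypothesis, this is exactly what is claimed. (If one allows the improper parabolic $P = G$, the statement is trivially witnessed by $L = G$.)

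For the ``moreover'' claim, the forward implication combines the first part with the definition of reducibility: if $\rho$ is reducible, then $\rho(\F_r)$ is contained in some proper parabolic $P < G$, and the first part produces a Levi $L < P$ with $\rho(\F_r) < L$. Conversely, if $\rho(\F_r) < L < P$ for some proper parabolic $P < G$ and some Levi $L < P$, then $\rho(\F_r) < P$, which by definition of reducibility gives the conclusion.

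I expect no substantive obstacle; the argument is essentially bookkeeping, combining the cited polystability/complete-reducibility equivalences with the definitions of reducible and completely reducible. The only point requiring attention is the convention that the parabolic ``$P < G$'' appearing in the iff statement should be read as \emph{proper}, so that the equivalence is not rendered vacuous by taking $P = L = G$.
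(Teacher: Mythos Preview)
Your proposal is correct and follows essentially the same approach as the paper: the paper's entire proof is the paragraph immediately preceding the lemma, which cites \cite{FlLa} for polystability of $K$--valued representations and \cite[Theorem~30]{Si4} for the equivalence with complete reducibility, and then simply states that this ``yields the following result.'' You have accurately reconstructed this reasoning and filled in the definitional unpacking that the paper leaves implicit.
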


\begin{lem}\label{reductive-stab}
If $\rho\co \F_r \to G$ is polystable, then $Stab_G (\rho)$ is a $($not necessarily connected$)$ reductive $\C$--group.  In particular, for every representation $\rho\co \F_r \to K$, $Stab_G (\rho)$ is a reductive $\C$--group.
\end{lem}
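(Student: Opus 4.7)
The plan is to invoke Matsushima's theorem, which states that if a reductive algebraic group $H$ acts on an affine variety $X$, then the stabilizer $\mathrm{Stab}_H(x)$ of a point $x \in X$ whose $H$-orbit is closed is itself a reductive algebraic group (possibly disconnected). Since $\hom(\F_r, G) \cong G^r$ is an affine variety, $G$ is a connected reductive $\C$--group acting algebraically on $G^r$ by simultaneous conjugation, and by definition a representation $\rho$ is polystable precisely when its $G$--orbit is closed, Matsushima's theorem applies directly and yields the first assertion.

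For the ``in particular'' clause, the key observation is that every $K$--valued representation $\rho \co \F_r \to K \leqs G$ has a closed $G$--orbit in $\hom(\F_r, G)$; this was established in \cite{FlLa} and is restated in the paragraph preceding Lemma~\ref{K-red}. Equivalently, by \cite[Theorem 30]{Si4}, such $\rho$ is completely reducible and hence polystable. Therefore the first part of the lemma applies to $\rho$, giving that $\mathrm{Stab}_G(\rho)$ is reductive.

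I would write this out as two short paragraphs: first cite Matsushima's theorem (e.g., as stated in standard GIT references) and apply it to $X = G^r$ with the conjugation action; second, invoke the fact that $K$--orbits in $\hom(\F_r, K)$, and even $G$--orbits of $K$--valued representations, are closed, so polystability is automatic. No obstacle is really expected here: the lemma is essentially a direct application of a classical GIT result, and the only thing to verify carefully is the translation between the definition of polystability used in the paper (closed adjoint orbit) and the hypothesis of Matsushima's theorem (closed orbit under the $G$--action on the ambient affine variety). Since the paper has just recalled this equivalence via \cite{Si4}, the argument is essentially a citation chain.
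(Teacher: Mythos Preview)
Your proposal is correct and follows essentially the same approach as the paper: both invoke Matsushima's theorem to conclude that the stabilizer of a closed orbit is reductive, and both deduce the $K$--valued case from the fact (recalled just before Lemma~\ref{K-red}) that $K$--valued representations have closed $G$--orbits.

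One small refinement worth noting: the paper is explicit about a subtlety you gesture at but do not quite resolve. Polystability in this paper is defined via the \emph{Euclidean} topology (closed adjoint orbit in $G^r$ with the analytic topology), whereas the algebraic form of Matsushima's theorem requires the orbit to be \emph{Zariski} closed (equivalently, $G/\mathrm{Stab}_G(\rho)\cong O_\rho$ must be an affine variety). The paper bridges this by observing, via \cite[Section~2]{FlLa3}, that $O_\rho$ is constructible (being the image of an algebraic map), so Euclidean-closed implies Zariski-closed. Your citation of \cite{Si4} handles the equivalence between polystability and complete reducibility, but not this Euclidean-versus-Zariski point; if you write out the proof, you should cite constructibility (or an equivalent argument) rather than \cite{Si4} for this step.
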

\begin{proof} By Matsushima's Theorem, a subgroup $H$ of a reductive $\C$--group $G$ is itself reductive if and only if $G/H$ is an affine algebraic variety.  Since $G/Stab_G (\rho) \isom O_\rho$, where $O_\rho \subset G^r$ is the adjoint orbit of $\rho$, we see that $O_\rho$ is an affine variety whenever it is Zariski closed.  However, as observed in \cite[Section 2]{FlLa3}, since $O_\rho$ is the image of an algebraic map, it is constructible, so $O_\rho$ is closed in the analytic topology on $G^r$ if and only if it is Zariski closed.
\end{proof}

\begin{prop}\label{irred-stab} A representation $\rho\co \F_r \to K$ is irreducible if and only if the $PK$--stabilizer of $\rho$ is finite.  
\end{prop}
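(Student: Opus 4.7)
The plan is to leverage the analogous characterization already established for $G$-valued representations (namely Sikora's Proposition~15 in \cite{Si4}, quoted earlier in this section) and transfer it to $K$ by means of Lemmas~\ref{centralizer-SDR} and~\ref{reductive-stab}. First I would unpack definitions: the $PG$-stabilizer of $\rho$ equals $C_G(\rho(\F_r))/Z(G)$, the $PK$-stabilizer equals $C_K(\rho(\F_r))/Z(K)$, and by the paper's convention $\rho\in\hom(\F_r,K)^{irr}$ precisely when $\rho\in\hom(\F_r,G)^{irr}$. Sikora's result therefore reduces the proposition to the purely group-theoretic claim
$$C_G(\rho(\F_r))/Z(G)\text{ is finite}\;\Longleftrightarrow\;C_K(\rho(\F_r))/Z(K)\text{ is finite.}$$

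Next I would invoke Lemma~\ref{centralizer-SDR} applied to the subgroup $H=\rho(\F_r)\leqs K$ to get a homotopy equivalence $C_K(\rho(\F_r))/Z(K)\heq C_G(\rho(\F_r))/Z(G)$. This immediately equates the $\pi_0$'s of the two sides and forces their identity components to be homotopy equivalent. The remaining task is to upgrade this topological matching into the algebraic statement that finiteness is preserved.

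For that upgrade I would use the structural input from Lemma~\ref{reductive-stab}: since $K$-valued representations are polystable, $C_G(\rho(\F_r))$ is a reductive $\C$-group, and because $Z(G)$ sits in it as a central reductive normal subgroup the quotient $C_G(\rho(\F_r))/Z(G)$ is again a reductive $\C$-group. Its identity component is a connected reductive $\C$-group, hence homotopy equivalent to its maximal compact subgroup. Likewise the identity component of $C_K(\rho(\F_r))/Z(K)$ is a connected compact Lie group. Thus each space is finite iff it is discrete iff its identity component is contractible, and a compact or a connected reductive complex Lie group is contractible only when it is trivial. Combining these equivalences with the homotopy equivalence above yields the desired biconditional.

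The main obstacle I foresee is precisely this last translation from homotopy to algebra: a homotopy equivalence between Lie groups does not in general preserve finiteness (for instance, $\mathbb{C}^{\times}$ is homotopy equivalent to the finite-dimensional but infinite group $S^1$). The resolution, and the reason the argument works, is that both sides of the equivalence here are groups of a restricted type (compact on one side, reductive complex on the other), for which "contractible identity component" does force "trivial identity component," so the strength of Lemma~\ref{reductive-stab} is what makes the passage possible.
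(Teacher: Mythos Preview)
Your proposal is correct and follows essentially the same approach as the paper: both arguments reduce to Sikora's characterization for $G$, invoke Lemma~\ref{centralizer-SDR} for the homotopy equivalence $C_K(\rho(\F_r))/Z(K)\heq C_G(\rho(\F_r))/Z(G)$, and use Lemma~\ref{reductive-stab} to ensure that a contractible identity component is actually trivial. The only minor difference is that the paper handles the forward direction (irreducible $\Rightarrow$ $PK$-stabilizer finite) by the more elementary observation that $Stab_K(\rho)/Z(K)$ injects into $Stab_G(\rho)/Z(G)$ as a subgroup, reserving the homotopy-to-algebra translation for the reverse implication only; your symmetric treatment of both directions via the homotopy equivalence is equally valid.
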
 
\begin{proof} Irreducibility means that the $PG$--stabilizer $Stab_G (\rho)/Z(G)$ is finite (see \cite{Si4}, and \cite{CaFl}).  The kernel of the composition
$$Stab_K (\rho) \maps Stab_G (\rho) \maps Stab_G (\rho)/Z(G),$$
is precisely $Stab_K (\rho) \cap Z(G) = Z(K)$, so
$Stab_K (\rho)/ Z(K)$ is a subgroup of the finite group $Stab_G (\rho)/Z(G)$ (in fact, the two are equal by Lemma~\ref{centralizer-SDR}).

In the other direction, say $Stab_K (\rho)/ Z(K)$ is finite.   By Lemma~\ref{centralizer-SDR}, 
$$Stab_G (\rho)/Z(G) = C_G (\rho(\F_r))/Z(G) \heq C_K (\rho(\F_r))/Z(K) = Stab_K (\rho)/ Z(K),$$
 which implies that the identity component $C$ of $Stab_G (\rho)/Z(G)$ is contractible.  By Lemma~\ref{reductive-stab}, $Stab_G (\rho)$ is a reductive $\C$--group, and the same is true of $C$ (since connected reductive $\C$--groups are characterized as affine algebraic groups over $\C$ whose radical is a torus).
Now, $C$ deformation retracts to its maximal compact subgroup $H < C$, which is a closed manifold.  But $H$ is contractible, so  $H = \{1\}$, and since $C$ is the complexification of $H$, we have $C = \{1\}$ as well.  Hence $Stab_G (\rho)/Z(G)$ is finite, and by  \cite{Si4}, we conclude that $\rho$ is irreducible.
\end{proof}

\begin{rem}
Note that the last three results are valid for representations of any finitely generated group $\Gamma$ replacing $\F_r$, since the respective proofs work in this more general case. 
\end{rem}

\subsection{The reducible locus}

In this section we study the reducible locus $\hom(\F_r, G)^{red}$.  By \cite[Proposition 27]{Si4}, this is an algebraic subset of $\hom(\F_r, G)$  and we begin with an analysis of its irreducible components.

\begin{lem}\label{H_P} Let $G$ be a connected reductive $\C$--group.
For a proper parabolic $P< G$, define $H_P:=\cup_{g\in G}\hom(\F_r,gPg^{-1})$.  Then $H_P$
and $H_P\aq G$ are irreducible algebraic sets, and for maximal $P$, these sets are exactly the irreducible components of $\hom(\F_r, G)^{red}$ and $\XC{r}(G)^{red}$ $($respectively$)$. 
\end{lem}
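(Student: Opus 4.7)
The plan is to realize $H_P$ as the image of a proper morphism out of a manifestly irreducible variety — so that closedness and irreducibility fall out simultaneously — and then, when $P$ is maximal, to identify the irreducible components of the reducible locus via a density-of-generators argument.

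First I would form the associated bundle $G \times_P P^r$, where $P$ acts on $G \times P^r$ by $p \cdot (g, \rho) = (gp^{-1}, p \rho p^{-1})$. This is a Zariski locally trivial fibre bundle over the flag variety $G/P$ with fibre $P^r$, and since $G/P$ and $P^r$ are irreducible (both $G$ and $P$ being connected) so is the total space. The conjugation map $G \times P^r \to G^r$, $(g, \rho) \mapsto g \rho g^{-1}$, is $P$-invariant and descends to
$$\bar\mu \co G \times_P P^r \maps G^r, \qquad [g, \rho] \longmapsto g \rho g^{-1},$$
whose image is exactly $H_P$. Properness of $\bar\mu$ I would get by factoring it as the closed embedding $G \times_P P^r \injects G/P \times G^r$, $[g, \rho] \mapsto (gP,\, g\rho g^{-1})$, followed by the projection $G/P \times G^r \to G^r$, which is proper because $G/P$ is complete. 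Hence $H_P$ is Zariski closed in $G^r \cong \hom(\F_r, G)$, and it is irreducible as the image of an irreducible variety. Since $H_P$ is then a $G$-stable, closed, irreducible affine subvariety, restricting the GIT quotient map to $H_P$ presents $H_P \aq G$ as a categorical quotient of an irreducible affine variety by a reductive group; hence $H_P \aq G$ is irreducible and closed in $\XC{r}(G)$.

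Now suppose $P$ is a maximal proper parabolic. Every reducible representation has image in some proper parabolic, hence in some maximal one, and $H_P$ depends only on the $G$-conjugacy class of $P$; since there are only finitely many such classes, this gives $\hom(\F_r, G)^{red} = \bigcup_{i=1}^s H_{P_i}$ for representatives $P_1, \dots, P_s$ of the conjugacy classes of maximal proper parabolics. Each summand is closed and irreducible by the first step, so the remaining task is to rule out containments $H_{P_i} \subset H_{P_j}$ for $i \neq j$. I would exhibit a distinguishing $\rho \in P_i^r$ using the standard density-of-generators fact that, for $r \geqs 2$, the set of $r$-tuples in $P_i^r$ generating a Zariski dense subgroup of the connected linear algebraic group $P_i$ is Zariski open and non-empty. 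For such $\rho$, any parabolic of $G$ containing $\rho(\F_r)$ must contain its Zariski closure $P_i$, and maximality of $P_i$ forces that parabolic to equal $P_i$; hence $\rho \in H_{P_i} \setminus H_{P_j}$ whenever $P_j$ is not $G$-conjugate to $P_i$.

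The $\XC{r}(G)^{red}$ version requires $\rho$ to have closed orbit, so I would instead take $\rho$ with dense image in the Levi $L_i \leqs P_i$ (which is reductive, so the $G$-orbit of $\rho$ is closed). If $[\rho] \in H_{P_j} \aq G$, then — since the closed-orbit representative of $[\rho]$ is unique up to $G$-conjugation — some $G$-conjugate of $\rho$ would land in $P_j$, forcing $L_i$ into a conjugate of $P_j$, and hence forcing $P_j$ to be $G$-conjugate to $P_i$ or to its opposite $P_i^-$. The main obstacle is this last non-containment step: it depends on the density-of-generators input and on careful bookkeeping of which parabolics contain a prescribed Levi, with the subtlety that opposite maximal parabolics share a Levi and can therefore collapse to the same GIT-component even when they are not $G$-conjugate — a harmless duplication in the list of $H_{P_i} \aq G$'s that does not affect the components-are-the-$H_{P_i} \aq G$ conclusion.
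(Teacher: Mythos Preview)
Your argument is correct and in fact more complete than the paper's in two respects.

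For closedness of $H_P$, the paper simply cites \cite[Proposition 27]{Si4} that the reducible locus is algebraic, and then obtains irreducibility as the image of the irreducible variety $\hom(\F_r,P)\times G$ under the conjugation map. Your route---realizing $H_P$ as the image of the proper morphism $G\times_P P^r \to G^r$ obtained by factoring through the closed embedding into $G/P\times G^r$ and using completeness of the flag variety---is self-contained and yields closedness and irreducibility simultaneously, without the external citation.

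For the identification of the irreducible components, the paper's proof is essentially a one-line assertion that the maximal elements of $\{H_P\}_P$ correspond to the maximal parabolics, leaving the non-containment step implicit. You supply this via the density-of-generators argument: a tuple with Zariski-dense image in $P_i$ witnesses $H_{P_i}\not\subset H_{P_j}$ for non-conjugate maximal $P_i,P_j$. For the GIT quotient you correctly pass to a tuple with dense image in a Levi $L_i$ (to ensure a closed orbit), and your observation about opposite parabolics is a genuine subtlety the paper does not mention: when $P_i$ and $P_i^-$ are non-conjugate maximal parabolics sharing the Levi $L_i$, one has $H_{P_i}\aq G = H_{P_i^-}\aq G$ even though $H_{P_i}\neq H_{P_i^-}$, so the list of GIT components can have repetitions---but, as you say, this is harmless for the stated conclusion. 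The bookkeeping you allude to (that the only proper parabolics containing a maximal Levi $L_i$ are $P_i$ and $P_i^-$) is standard: any reductive subgroup of a parabolic $Q$ is $Q$-conjugate into a Levi factor, and maximality of $L_i$ among proper Levis forces $L_i$ to \emph{be} a Levi factor of $Q$, whence $Q\in\{P_i,P_i^-\}$.
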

\begin{proof}
First note that $H_P$ is an algebraic set itself, again by \cite[Proposition 27]{Si4}, and moreover the map $\hom(\F_r,P)\times G\to H_P$ given by $(\rho, g)\mapsto g\rho g^{-1}$ is a surjective algebraic map from an irreducible variety to an algebraic set.  In general, a surjective morphism has the property that if the inverse image of an algebraic set is irreducible, then the original set was too.  Therefore, we conclude that $H_P$ is irreducible.  This implies $H_P\aq G$ is irreducible.  
Conversely, the reducible locus $\hom(\F_r, G)^{red}$ is the union of the irreducible subsets $H_P$, so its irreducible components are the maximal elements of the collection $\{H_P\}_P$ (ordered by inclusion), and these correspond to the maximal parabolics.  The situation for $\XC{r}(G)^{red}$ is analogous.
\end{proof}

The next lemma is a technical result that allows us to compute the codimension of the reducible locus.

\begin{lem}\label{finiteKconjugates}
Let $G$ be a connected reductive $\C$--group, and fix a maximal compact subgroup $K < G$.  Then 
for every parabolic subgroup $P \leqs G$, we have $KP = G$.
Consequently, the set
$$\{ H \leqs K \,:\, H = K\cap P \textrm{ for some parabolic subgroup } P \leqs G\}$$
contains only finitely many $K$--conjugacy classes of subgroups.
\end{lem}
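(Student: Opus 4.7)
The plan is to establish the first claim via the Iwasawa decomposition, then deduce the finiteness statement from the first claim combined with the standard classification of parabolic subgroups of $G$ up to conjugacy.

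For the first claim $KP = G$, I would begin with the Iwasawa decomposition $G = KAN$ of $G$ regarded as a real reductive Lie group, where $A = \exp(\mathfrak{a})$ with $\mathfrak{a} \subseteq i\mathfrak{k}$ maximal abelian and $N$ is unipotent. The key observation is that $AN$ lies inside a (complex) Borel subgroup $B_0 \leqs G$: indeed, if $T_\C \leqs G$ is the centralizer of $A$, then $B_0 := T_\C N$ is a Borel containing $AN$. Hence $G = KAN \subseteq KB_0 \subseteq G$, yielding $G = KB_0$. I would then extend this to an arbitrary Borel subgroup $B$ of $G$ using conjugacy of Borels: writing $B = gB_0 g^{-1}$ and factoring $g = kb$ with $k\in K$, $b \in B_0$ (by $G=KB_0$), one obtains $B = kb B_0 b^{-1} k^{-1} = kB_0 k^{-1}$, so
$$KB = KkB_0k^{-1} = KB_0 k^{-1} = Gk^{-1} = G.$$
Since every parabolic subgroup $P \leqs G$ contains some Borel subgroup $B$, this immediately gives $KP \supseteq KB = G$.

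For the finiteness statement, I would invoke the standard structural fact from the theory of reductive algebraic groups that $G$ has only finitely many $G$-conjugacy classes of parabolic subgroups (one for each subset of a chosen set of simple roots). It therefore suffices to show that $G$-conjugate parabolics yield $K$-conjugate intersections with $K$. Given $P' = gPg^{-1}$, apply the first claim to write $g = kp$ with $k\in K$, $p\in P$; then
$$P' = kp P p^{-1}k^{-1} = kPk^{-1},$$
so $K \cap P' = k(K \cap P)k^{-1}$. Combining these two observations yields the desired finiteness of $K$-conjugacy classes of subgroups of the form $K \cap P$.

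The only real subtlety is the identification of a Borel subgroup containing the Iwasawa factor $AN$; I expect this to be the main point where care is needed, though it is a well-known construction. The remaining steps are straightforward algebraic manipulations with the decomposition $G=KB$.
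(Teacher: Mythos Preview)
Your proposal is correct and follows essentially the same route as the paper: establish $G = KB_0$ for one Borel via the Iwasawa decomposition, propagate this to all Borels (and hence all parabolics) by conjugacy using the factorization $g = kb$, and then deduce finiteness from the finiteness of $G$--conjugacy classes of parabolics together with the observation that $G$--conjugate parabolics are already $K$--conjugate. The only cosmetic difference is that the paper applies the Iwasawa decomposition to the semisimple part $DG$ and then passes to $G$ via $G = (DG)Z(G)$, whereas you apply it directly to $G$; both are valid and lead to the same conclusion.
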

\begin{proof}
The Iwasawa decomposition of $DG$ implies that $DG = (DK) B'$ for some Borel subgroup  $B' < DG$ (see \cite{Bump}). 
Let $B < G$ be a Borel subgroup containing $B'$.  We have
$$G = (DG) (Z(G)) = (DK) B' (Z(G)) = (DK) B$$ 
(since $B', Z(G) < B$), so $G=KB$.  
Now let $P < G$ be parabolic.  Recall that $P$ contains a Borel subgroup of $G$, and all Borel subgroups of $G$ are conjugate, so $gBg^{-1} < P$ for some $g\in G$.  Since $G=KB$, we can write $g = kb$ for some $k\in K$, $b\in B$, and now 
$$gBg^{-1} = kbBb^{-1}k^{-1} = kBk^{-1}.$$  
Hence
$$KP \supset KgBg^{-1} = KkBk^{-1} = KBk^{-1} = G,$$
as claimed.

It is well known that there are only finitely many $G$--conjugacy classes of parabolic subgroups in $G$ (see \cite{Borel}).  
In fact, if $P, Q < G$ are parabolic and conjugate in $G$, then we have $gPg^{-1} = Q$ for some $g\in G$.  Since $G = KP$, we can write $g = kp$ for some $k\in K$ and some $p\in P$, and we have
$$Q = gPg^{-1} = kpPp^{-1} k^{-1} = kPk^{-1}.$$
Hence there are only finitely many $K$--conjugacy classes of parabolic subgroups in $G$, and the final statement of the lemma follows because for any $k\in K$ we have $k(P\cap K)k^{-1}=kPk^{-1}\cap K$.
\end{proof}

Our main results will rely crucially on the fact that the reducible locus has sufficiently large codimension in the representation space.
Recall that the rank $\mathrm{Rank}(G)$ of a complex (respectively, compact) reductive Lie group $G$ is the dimension over $\C$ (respectively, over $\R$) of a maximal torus.

\begin{thm}\label{gencodim}
Let $G$ be a connected reductive $\C$--group, or a connected compact Lie group.  Then $\hom(\F_r,G)^{red}$ has real codimension at least $4$ when $r\geqs 3$, and also when $r\geqs2$ and $\mathrm{Rank}(DG)\geqs2$.
\end{thm}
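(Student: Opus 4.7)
The plan is to use Lemma~\ref{H_P}, which identifies the irreducible components of $\hom(\F_r,G)^{red}$ with the sets $H_P = \bigcup_{g\in G}\hom(\F_r, gPg^{-1})$, as $P$ ranges over the maximal proper parabolic subgroups of $G$. Since the codimension of a finite union is the minimum over its components, it suffices to bound the real codimension of each $H_P$ in $G^r$ below by $4$.

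The next step is a dimension count. I would realize $H_P$ as the image of the canonical conjugation map
$$G \times_P P^r \longrightarrow G^r, \qquad [g, \rho] \longmapsto g\rho g^{-1},$$
where $P$ acts on $G$ by right multiplication and diagonally on $P^r$ by conjugation. The source is a fibre bundle over $G/P$ with fibre $P^r$, so has complex dimension $\dim G + (r-1)\dim P$. Using the standard fact $N_G(P) = P$ for any parabolic of a connected reductive group, this map is generically finite, yielding
$$\codim_\C H_P \geqs (r-1)\dim_\C G/P,$$
and hence real codimension at least $2(r-1)\dim_\C G/P$. Since $P$ is proper, $\dim_\C G/P \geqs 1$, which immediately handles the case $r\geqs 3$.

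For the case $r=2$ with $\mathrm{Rank}(DG)\geqs 2$, the bound becomes $2\dim_\C G/P$, so I need $\dim_\C G/P\geqs 2$ for every maximal proper parabolic of $G$. Via the decomposition~\eqref{decomp}, any parabolic of $G$ contains the central torus factor, so $\dim_\C G/P = \dim_\C DG/(DG \cap P)$; the bound then follows from the classification of maximal parabolics of simple complex Lie groups of rank $\geqs 2$ (the smallest flag varieties being $\cp^n$, quadrics, and Grassmannians, each of complex dimension $\geqs 2$), combined with the product structure of parabolics across the simple factor decomposition of $DG$. The compact case $G=K$ runs in parallel: Lemma~\ref{K-red} gives $\hom(\F_r,K)^{red} = \bigcup_P K\cdot \hom(\F_r, K\cap P)$, and Lemma~\ref{finiteKconjugates} (which yields $K_\C = KP$) together with compactness implies $K/(K\cap P)\cong K_\C/P$ as real manifolds, so the analogous balanced product $K\times_{K\cap P}(K\cap P)^r$ gives the identical real codimension bound $2(r-1)\dim_\C K_\C/P$.

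I expect the main obstacle to be the $r=2$ case: invoking the rank hypothesis to rule out maximal parabolics with $\dim_\C G/P = 1$ requires careful attention to how parabolics distribute among the simple factors of $DG$, and leans on the classification of minimal flag varieties of the simple complex groups.
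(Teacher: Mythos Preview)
Your dimension count for $r \geqs 3$ is essentially the paper's argument, repackaged: where you form the balanced product $G \times_P P^r$ and invoke generic injectivity via $N_G(P) = P$ (which tacitly needs a generic $\rho \in P^r$ to have Zariski-dense image in $P$), the paper maps $G \times P^r \to G^r$ directly and bounds the fibre dimension below by $\dim P$ via the Fibre Dimension Theorem. Both yield $\codim_\C H_P \geqs (r-1)\dim_\C G/P$, and the compact case is handled the same way in both, reducing to $K/(K\cap P) \cong K_\C/P$ via Lemma~\ref{finiteKconjugates}.

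Your instinct that the $r=2$ case is the obstacle is correct, but the difficulty is worse than you anticipate: the claim is actually false under the stated hypothesis. Take $G = \SLm{2} \times \SLm{2}$, so that $\mathrm{Rank}(DG) = 2$. The subgroup $P = B \times \SLm{2}$ (with $B$ a Borel of the first factor) is a maximal proper parabolic with $G/P \cong \cp^1$, and one computes directly that $H_P = \hom(\F_2,\SLm{2})^{red} \times (\SLm{2})^2$ has complex codimension $1$ (real codimension $2$) in $G^2$. So no argument via the product structure of parabolics across simple factors can salvage the bound; the hypothesis one actually needs is that \emph{every} simple factor of $DG$ have rank at least $2$ (compare the hypothesis in Theorem~\ref{conj-thm}). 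The paper's own argument for this case --- that Schubert cells in $DG/(P\cap DG)$ ``necessarily contain a maximal torus $T$ of $DG$'' and hence have dimension at least $\mathrm{Rank}(DG)$ --- breaks on the same example, since $\cp^1$ has no $2$-cells.
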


\begin{proof}
We first handle the complex case.  In general, $\hom(\F_r, G)^{red} =\cup_P \hom(\F_r,P)$ where $P$ runs through all proper parabolic subgroups of $G$.  
Up to conjugation there are only finitely many such $P$'s.  So the dimension is determined by one of the maximal dimensional proper parabolics. 
Call one of these $P_{max}$.  Recall that $H_{P_{max}}$ is the image of the mapping $q\co G\times \hom(\F_r,P_{max})\to \hom(\F_r,G)$ given by $(g,\rho)\mapsto g\rho g^{-1}$, and $H_P$ is an irreducible algebraic set by Lemma \ref{H_P}.  The morphism $q$ factors through the quotient set $(G\times \hom(\F_r,P_{max}))/P_{max}$, where the action of $P_{max}$ on $G\times \hom(\F_r,P)$ is given by $p\cdot(g,\rho)=(gp^{-1},p\rho p^{-1})$.  Therefore, every fiber of $q$ contains a copy of $P_{max}$ (up to isomorphism); this implies $\dim_\C \mathcal{F} \geqs \dim_\C P$ for every fiber $\mathcal{F}$ of $q$.\footnote{The fiber is equal to $P_{max}$ whenever $\rho(\F_r)$ is a Zariski dense subgroup of $P_{max}$, since the normalizer of $P_{max}$ in $G$ is $P_{max}$ itself.
And $\rho(\F_r)$ is generically dense when $r\geqs \max\{2, \dim P/L\}$ where $L$ is a Levi subgroup, since two elements generically generate a dense subgroup in a compact Lie group, and consequently two elements generically generate a Zariski dense subgroup in a reductive group.}  By the Fiber Dimension Theorem (see \cite{Sh1}), we conclude there exists a (generic) fiber $\mathcal{F}$ such that:\begin{eqnarray*}\dim_\C \left(\hom(\F_r,G)^{red}\right)&=&\dim_\C \left(H_{P_{max}}\right)\\&=&r(\dim_\C P_{max})+\dim_\C G -\dim_\C \mathcal{F}\\&\leqs& (r-1)\dim_\C(P_{max})+\dim_\C G.\end{eqnarray*}  Therefore, 
 \begin{eqnarray*}\codim_\C \left(\hom(\F_r,G)^{red} \right)\\ \geqs& r(\dim_\C G) - \left[(r-1)\dim_\C\left(P_{max}\right)+\dim_\C (G)\right] \\ =& (r-1)\left(\dim_\C (G)-\dim_\C \left(P_{max}\right)\right).\hspace{.8in}
 \end{eqnarray*}  So for $r\geqs 3$, we have at least complex codimension 2 (and hence real codimension at least 4) in general.  
 
Now suppose $\mathrm{Rank}(DG)\geqs2$ and let $P < G$ be a proper parabolic subgroup.  Then the codimension of $P$ in $G$ is the dimension of $G/P$, which is isomorphic to the flag variety $DG/(P\cap DG)$.  The flag variety has Schubert cells (which are CW cells) given by $B$--orbits of the left translation action of $DG$ on $DG/(P\cap DG)$, where $B\subset P$ is a Borel subgroup of $DG$.  These cells necessarily contain a maximal torus $T$ of $DG$ where $T\subset B$ (by the Bruhat decomposition; see \cite{Bump}).  Since $\mathrm{Rank}(DG)\geqs2$, we have $\dim_\C (T)\geqs2$ and so the Schubert cells and consequently the flag variety have dimension at least $2$.  We conclude the complex codimension of $P$ in $G$ is at least $2$, which implies (by the above inequalities) that the complex codimension of $\hom(\F_r,G)^{red}$ is at least 2 when $r\geqs 2$ (and again the real codimension is at least 4).

Next, consider the case in which $G$ is compact.   Then 
$$\hom(\F_r, G)^{red} =\bigcup_{P} \hom(\F_r,P\cap G),$$ 
where $P$ runs through all proper parabolic subgroups of the complexification $G_\C$.  Lemma \ref{finiteKconjugates} implies that the dimension is determined by one of the maximal dimensional proper parabolics in $G_\C$; again call it $P_{max}$.  A similar argument as above, where we use Hardt's Theorem (see \cite{Hardt}) instead of the Fiber Dimension Theorem, shows that $$\codim_\R \left(\hom(\F_r,G)^{red} \right) \geqs (r-1)\left(\dim_\R (G)-\dim_\R \left(P_{max}\cap G\right)\right).$$ 

By Lemma \ref{finiteKconjugates} we have $G_\C/P = GP/P  \cong G/(P\cap G)$  for every parabolic $P\leqs G_\C$.  Therefore, since the complex dimension of $G_\C/P$ is at least 1, the real codimension of $P\cap G$ in $G$ is at least 2.  And as shown above, when $\mathrm{Rank}(DG) = \mathrm{Rank}(D(G_\C)) \geqs2$, the real codimension of $P\cap G$ in $G$ is at least 4.

Hence when $r\geqs3$, and also when $r\geqs2$ and $\mathrm{Rank}(DG)\geqs2$, the real codimension of $\hom(\F_r, G)^{red}$ is at least 4.
\end{proof}

\subsection{Global description of loci}

We now show how these various loci relate to each other.   In this section, unless otherwise noted, $G$ will denote either a connected compact Lie group, or a connected reductive $\C$--group. 

The center $Z(G)^r < G^r$ acts on $\hom(\F_r, G)$ by setting $\rho \cdot z$ to be the representation $(\rho\cdot z) (e_i) = \rho(e_i) z_i$, where $z = (z_1, \ldots, z_r)$.  This action descends to an action on $\XC{r}(G)$ since it commutes with the adjoint action of $G$ and preserves the polystable locus in $\hom(\F_r,G)$.
Consider a property P such that if $\rho\in \hom(\F_r, G)$ satisfies P, then  for all $g\in G$ and $z\in Z(G)^r$, both $g\rho g^{-1}$ and $\rho\cdot z$ satisfy P.  
Let $\hom(\F_r, G)^\mathrm{P} = \{\rho \,:\, \rho \textrm{ satisfies P}\}$ and let $\XC{r}(G)^\mathrm{P} \subset \XC{r} (G)$ denote the image of $\hom(\F_r, G)^\mathrm{P}$ under the quotient map
$\hom(\F_r, G) \to \XC{r} (G)$.  Let $\XC{r}(DG)^\mathrm{P} = \XC{r}(DG)\cap \XC{r}(G)^\mathrm{P}$.  

The following result extends the free group case of  \cite[Lemma 2.3]{BiLaRa}, since 
when P is vacuous, we have $\XC{r}(G)^\mathrm{P} = \XC{r}(G)$ and  $\XC{r}(DG)^\mathrm{P} = \XC{r}(DG)$.

\begin{thm}\label{P-theorem} 
Let $T$ be a maximal central torus in $G$, and set $F=T\cap DG$.
Then  $\XC{r}(G)^\mathrm{P}\cong \XC{r}(DG)^\mathrm{P}\times_{F^r} T^r$ $($in the notation from $(\ref{diag})$$)$,
and 
\begin{equation} \label{P-fibn} \XC{r} (DG)^\mathrm{P}\maps \XC{r} (G)^\mathrm{P} \maps \XC{r} (G/DG)\end{equation}
is a Serre fibration sequence.
\end{thm}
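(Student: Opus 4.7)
My plan is to upgrade the free group case of \cite[Lemma 2.3]{BiLaRa} (equivalently, equation (\ref{diag})) from the vacuous property to the property $\mathrm{P}$, and then recognize the projection onto the second factor as the map induced by the quotient $G\twoheadrightarrow G/DG$.

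First I would unpack the homeomorphism (\ref{diag}) explicitly. Starting from the decomposition $G\isom (DG\times T)/F$ of (\ref{decomp}), because $\F_r$ is free one gets $\hom(\F_r,G)\isom(\hom(\F_r,DG)\times T^r)/F^r$, where the class of $(\rho,(t_1,\dots,t_r))$ corresponds to the homomorphism sending $e_i\mapsto \rho(e_i)t_i$ (i.e., $\rho\cdot t$ in the notation of the paper). Since $T\leqs Z(G)$, the conjugation action of $G$ passes through $DG$ and acts trivially on the $T^r$-factor, and closedness of a $G$-orbit in $\hom(\F_r,G)$ is equivalent to closedness of the underlying $DG$-orbit in $\hom(\F_r,DG)$. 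Taking quotients then gives the homeomorphism $\XC{r}(G)\cong\XC{r}(DG)\times_{F^r}T^r$.

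Next I would check that this homeomorphism identifies $\XC{r}(G)^\mathrm{P}$ with $\XC{r}(DG)^\mathrm{P}\times_{F^r}T^r$. By hypothesis $\mathrm{P}$ is preserved by $Z(G)^r$, and in particular by $T^r\leqs Z(G)^r$, so $\rho\cdot t$ satisfies $\mathrm{P}$ if and only if $\rho$ does. Hence on the right-hand side, the $\mathrm{P}$-locus is saturated in the $T^r$-direction and equals $\{[\rho]\in\XC{r}(DG): [\rho]\in\XC{r}(G)^\mathrm{P}\}\times_{F^r}T^r=\XC{r}(DG)^\mathrm{P}\times_{F^r}T^r$, using the definition $\XC{r}(DG)^\mathrm{P}=\XC{r}(DG)\cap\XC{r}(G)^\mathrm{P}$.

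For the fibration, note that since $G/DG\isom T/F$ is abelian, $\XC{r}(G/DG)=(G/DG)^r=T^r/F^r$, and under the homeomorphism above the map $\XC{r}(G)^\mathrm{P}\to\XC{r}(G/DG)$ induced by $G\twoheadrightarrow G/DG$ becomes the projection $\XC{r}(DG)^\mathrm{P}\times_{F^r}T^r\to T^r/F^r$ onto the (reduction of the) second factor. Since $F$ is finite and acts on $T$ by translation, $F^r$ acts freely on $T^r$ and $T^r\to T^r/F^r$ is a finite covering map; pulling back the projection along any local section of this covering produces a local trivialization $\XC{r}(DG)^\mathrm{P}\times U$, so we have a locally trivial fiber bundle with fiber $\XC{r}(DG)^\mathrm{P}$. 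Every such bundle is a Serre fibration, and the fiber over the basepoint is exactly $\XC{r}(DG)^\mathrm{P}$ (corresponding to $t\in F^r$, which gives representations landing in $DG$), completing the sequence.

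The step I expect to be the most delicate is the compatibility of the $\mathrm{P}$-locus with taking polystable orbits through the identification (\ref{diag}): one needs to know that the quotient by closed-orbit reduction on $\hom(\F_r,G)$ coincides with the corresponding closed-orbit reduction upstairs on $\hom(\F_r,DG)\times T^r$, which ultimately follows from $T$ being central (so the $T^r$ factor contributes no non-closed orbits) together with the hypothesis that $\mathrm{P}$ is preserved by conjugation and by the $Z(G)^r$-action.
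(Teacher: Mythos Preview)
Your proposal is correct and follows essentially the same route as the paper's proof: both identify $\XC{r}(G)^{\mathrm{P}}$ with $\XC{r}(DG)^{\mathrm{P}}\times_{F^r}T^r$ via the action map $[[\rho],t]\mapsto[\rho\cdot t]$ (using that $T^r\leqs Z(G)^r$ preserves $\mathrm{P}$), and then recognize the projection to $T^r/F^r\cong\XC{r}(G/DG)$ as a fibration. The only minor difference is that the paper obtains the Serre fibration by quoting an abstract result (\cite[Corollary~A.2]{BiLaRa}) about quotienting a fibration by a free action on the base, whereas you argue directly that the free translation action of $F^r$ on $T^r$ makes $T^r\to T^r/F^r$ a finite cover and then pull back to get local triviality; these are equivalent in this setting.
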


\begin{proof}
We have $G \isom (DG \cross T)/F$ as in (\ref{decomp}).
The product fibration sequence $$\XC{r} (DG)^\mathrm{P}\to\XC{r} (DG)^\mathrm{P} \cross T^r \to T^r$$ admits a free action of $F^r \leqs Z(G)^r$ on the base and total space, making the projection equivariant: on the total space, the action is $f\cdot [[\rho], t] = [ [\rho]\cdot f^{-1}, f t]$ and on the base it is simply left multiplication.  By~\cite[Corollary A.2]{BiLaRa}, there is an induced fibration sequence after passing to the quotient spaces, and the fiber is homeomorphic to $\XC{r} (DG)^\mathrm{P}$:  $$\XC{r} (DG)^\mathrm{P}\maps \XC{r} (DG)^\mathrm{P} \cross_{F^r} T^r \to \XC{r} (G/DG) \isom T^r/F^r.$$

We claim that the image of the map 
\begin{equation}\label{act-map}\XC{r} (DG)^\mathrm{P} \cross_{F^r} T^r \maps\XC{r} (G),\end{equation} 
given by $[[\rho],t]\mapsto [\rho\cdot t]$, lies in $\XC{r} (G)^\mathrm{P}$.  This follows since if $\rho$ satisfies P, then by assumption so does $\rho\cdot z$ for any $z\in Z(G)^r$, and since $T\subset Z(G)$ we have $t\in Z(G)^r$.

It remains to check that every point in $\XC{r} (G)^\mathrm{P}$ is in the image of (\ref{act-map}).  Say $[\rho]\in \XC{r} (G)^\mathrm{P}$, and write $\rho(e_i) = \rho'(e_i) t_i$ for some $\rho'(e_i)\in DG$ and $t_i\in T$.  Then, setting $t = (t_1, \ldots, t_r)$, we have $[[\rho'], t] \mapsto [\rho]$, and $[\rho']$ satisfies P since $\rho$ satisfies P and $[\rho'] = [\rho]\cdot t^{-1}$.

This completes the proof that $\XC{r}(G)^\mathrm{P}\cong \XC{r}(DG)^\mathrm{P}\times_{F^r} T^r$, and also shows that (\ref{P-fibn}) is a Serre fibration.
\end{proof}

\begin{cor}\label{P-cor}  
$\XC{r}(G)\cong \XC{r}(DG)\times_{F^r} T^r$, $\XC{r}(G)^{irr}\cong \XC{r}(DG)^{irr}\times_{F^r} T^r$, $\XC{r}(G)^{red}\cong \XC{r}(DG)^{red}\times_{F^r} T^r$, and $\XC{r}(G)^{good}\cong \XC{r}(DG)^{good}\times_{F^r} T^r.$
\end{cor}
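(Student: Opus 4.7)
The plan is to apply Theorem~\ref{P-theorem} with the four choices of property $\mathrm{P}$: the vacuous property (yielding $\XC{r}(G)$ itself), irreducibility, reducibility, and goodness. For each, I must check that $\mathrm{P}$ is preserved by both the conjugation action of $G$ and the translation action of $Z(G)^r$ on $\hom(\F_r, G)$. Conjugation invariance is immediate from each definition.

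For $Z(G)^r$-invariance the vacuous case is trivial. For irreducibility (and hence reducibility), the key observation I would record is that $Z(G) \leqs P$ for every parabolic subgroup $P \leqs G$: indeed $P$ contains a maximal torus $T$, and since $C_G(T) = T$ for connected reductive $G$, we have $Z(G) \leqs C_G(T) = T \leqs P$. Consequently $\rho(\F_r) \leqs P$ if and only if $(\rho\cdot z)(\F_r) \leqs P$, because each $z_i \in Z(G) \leqs P$. For goodness, each $z_i$ is central in $G$, so $Stab_G (\rho\cdot z) = Stab_G (\rho)$ and the $PG$-stabilizers agree.

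The subtle point will be reconciling the definition $\XC{r}(DG)^\mathrm{P} := \XC{r}(DG) \cap \XC{r}(G)^\mathrm{P}$ used in Theorem~\ref{P-theorem} with the intrinsic interpretation on the right-hand side of the corollary. Using the decomposition (\ref{decomp}), the parabolic subgroups of $G$ are precisely the subgroups of the form $(P' \cross T)/F$ for $P' \leqs DG$ parabolic, and a $DG$-valued representation lies in such a parabolic iff it lies in $P'$; hence $G$- and $DG$-irreducibility coincide for $DG$-valued $\rho$, and likewise for reducibility. For goodness, the same decomposition yields $Z(G) \cong (Z(DG) \cross T)/F$ and $C_G (\rho) \cong (C_{DG}(\rho) \cross T)/F$ for $DG$-valued $\rho$, giving $C_G (\rho)/Z(G) \cong C_{DG}(\rho)/Z(DG)$, so the $PG$- and $P(DG)$-stabilizers agree as well. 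The compact case is handled identically, since the compact definitions of all four properties pass through parabolics of the complexification.

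The main obstacle I foresee is the bookkeeping around the quotient $G \cong (DG \cross T)/F$ when identifying the two interpretations of $\XC{r}(DG)^\mathrm{P}$; once this is done, all four homeomorphisms fall out immediately from Theorem~\ref{P-theorem}.
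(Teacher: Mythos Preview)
Your proposal is correct and follows essentially the same approach as the paper: apply Theorem~\ref{P-theorem} and verify that each property is invariant under conjugation and central translation, then check that the extrinsic and intrinsic notions of $\XC{r}(DG)^\mathrm{P}$ agree. The only minor difference is that the paper handles the $Z(G)^r$--invariance of irreducibility via the stabilizer characterization (Proposition~\ref{irred-stab} and \cite{Si4}: irreducible $\Leftrightarrow$ finite $PG$--stabilizer), which makes invariance under central translation immediate since $Stab_G(\rho\cdot z) = Stab_G(\rho)$; you instead argue directly with parabolics, showing $Z(G)\leqs P$. Both are perfectly valid, and your more explicit treatment of the intrinsic-versus-extrinsic issue is a welcome elaboration of what the paper dispatches in a single sentence.
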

\begin{proof} By Theorem~\ref{P-theorem}, we just need to check that each of these properties is invariant under conjugation and under multiplication by central elements.  But the above properties are characterized in terms of stabilizers (see Proposition \ref{irred-stab} in particular), and these operations do not change the stabilizers.  Note that for these properties, $\XC{r} (DG)^\mathrm{P}$ (as defined above) agrees with the intrinsic notion.
\end{proof}

We next generalize \cite[Corollary 2.6, Corollary 2.7]{FlLa2} and thereby reduce the study of singularities in $\XC{r}(G)$ for a general reductive $\C$--group $G$ to the semisimple case.  Let $X^{sing}$ denote the singular locus of an algebraic variety $X$ 
and let $X^{sm}=X-X^{sing}$ be the smooth locus.

\begin{prop}\label{etale} 
Let $G$ be a connected, reductive $\C$--group, and let $T$ be a maximal central torus in $G$.  Then $\XC{r}(G)$ and $\XC{r}(DG)\times T^r$ are \'etale equivalent.  Consequently, $\XC{r}(G)^{sing}\cong \XC{r}(DG)^{sing}\times_{F^r} T^r$
and $\XC{r}(G)^{sm}\cong \XC{r}(DG)^{sm}\times_{F^r} T^r$.
\end{prop}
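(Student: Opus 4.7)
\medskip

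\noindent\textbf{Proof plan for Proposition \ref{etale}.}

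The plan is to exhibit $\XC{r}(DG)\times T^r$ as a finite étale cover of $\XC{r}(G)$, and then transfer smoothness information across this cover. First, I would invoke the isomorphism (\ref{decomp}), $G\cong (DG\times T)/F$ with $F = DG\cap T$ finite, together with Corollary~\ref{P-cor}, to identify
\[
\XC{r}(G)\;\cong\;\XC{r}(DG)\times_{F^r}T^r,
\]
and view the natural quotient map $q\co \XC{r}(DG)\times T^r \to \XC{r}(G)$ as a $G$-invariant realization of this identification. The diagonal action of $F^r$ on $\XC{r}(DG)\times T^r$ is free, because its projection to the multiplicative action of $F^r$ on $T^r$ is already free. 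Since $F$ is finite and acts by algebraic automorphisms (translation on $T$ and conjugation/multiplication on $\XC{r}(DG)$), the quotient map $q$ is a finite étale cover of algebraic varieties. This gives the asserted étale equivalence: $\XC{r}(DG)\times T^r$ is a common étale cover of itself and of $\XC{r}(G)$.

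Next, for the singular locus statement I would use that étale morphisms are smooth, so a point of $\XC{r}(G)$ is smooth if and only if each of its preimages in $\XC{r}(DG)\times T^r$ is smooth. Since $T^r$ is a smooth variety, the product structure gives
\[
\bigl(\XC{r}(DG)\times T^r\bigr)^{sing}\;=\;\XC{r}(DG)^{sing}\times T^r,
\]
and likewise for the smooth locus. Because the $F^r$-action is by algebraic automorphisms on each factor, it preserves $\XC{r}(DG)^{sing}\times T^r$ and $\XC{r}(DG)^{sm}\times T^r$, so these loci descend to the quotient. Combined with the previous paragraph, this yields
\[
\XC{r}(G)^{sing}\;\cong\;\XC{r}(DG)^{sing}\times_{F^r}T^r,\qquad
\XC{r}(G)^{sm}\;\cong\;\XC{r}(DG)^{sm}\times_{F^r}T^r.
\]

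The substantive step is verifying that $q$ is étale, which reduces to (i) the freeness of the $F^r$-action on $\XC{r}(DG)\times T^r$, guaranteed by freeness on $T^r$, and (ii) the general fact that the quotient of a variety by a free action of a finite group is étale. The potential subtlety is that $\XC{r}(DG)$ is itself singular, but this poses no difficulty: étaleness of $q$ is a statement about the morphism, not about the source, and is detected by the freeness of the group action together with finiteness of $F$. Once étaleness is in hand, the transfer of the (sm)/(sing) decomposition across $q$ is automatic, and the $F^r$-equivariance of these loci — which is the only additional thing needed to descend to $\XC{r}(G)$ — follows because $F$ acts on $DG$ and $T$ by group automorphisms.
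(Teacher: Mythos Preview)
Your proposal is correct and follows essentially the same route as the paper: identify $\XC{r}(G)\cong \XC{r}(DG)\times_{F^r}T^r$ via Corollary~\ref{P-cor}, observe the $F^r$--action is free (because it is free on $T^r$), conclude the quotient map is \'etale, and then read off the singular and smooth loci using smoothness of $T^r$. The only notable difference is that the paper first checks that $\XC{r}(DG)\times T^r$ is normal (as a GIT quotient of the smooth variety $DG^r\times T^r$) before invoking the \'etaleness criterion from \cite{DrJ}; you instead appeal directly to the general fact that a free finite-group quotient is \'etale, which is valid without normality and makes your argument slightly leaner. One small slip: $F$ acts on $T$ by translation, not by group automorphisms, but this is harmless since translations are still algebraic automorphisms of the variety $T^r$ and therefore preserve the smooth/singular decomposition.
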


\begin{proof}
By Corollary~\ref{P-cor}, we have $\XC{r}(G)\cong \XC{r}(DG)\times_{F^r} T^r$, where $F^r$ is acting freely.  Since $DG^r\times T^r$ and $G^r$ are smooth, they are normal.  This implies (see \cite{DrJ}) that $(DG^r\times T^r)\aq DG=\XC{r}(DG)\times T^r$ is also normal.   However, the GIT projection 
$$\XC{r}(DG)\times T^r\to\XC{r}(DG)\times_{F^r}T^r$$ is then \'etale because $F^r$ is 
finite and acts freely (see \cite{DrJ}), which completes the proof that $\XC{r}(G)$ and $\XC{r}(DG)\times T^r$ are \'etale equivalent.  

Next, since \'etale maps determine local isomorphisms (at both singularities and smooth points), we immediately conclude (since $T^r$ is smooth) that $\XC{r}(G)^{sing}\cong \XC{r}(DG)^{sing}\times_{F^r} T^r$ and $\XC{r}(G)^{sm}\cong \XC{r}(DG)^{sm}\times_{F^r} T^r$, as required.
\end{proof}

\subsection{Local description of loci}

We now note some local properties of $\XC{r}(G)$ that are important in the study of its global topology.
 
An algebraic variety is irreducible if and only if there exists a path connected Zariski open dense smooth subset (for a proof, see \cite[Theorem 8.4]{CoMa}). Since $\XC{r}(G)$ is normal this fact is also local, in the following sense.

When $G$ is a connected reductive $\C$--group, the variety $\XC{r} (G)$ is normal (since it is a GIT quotient of the smooth, connected variety $G^r$ \cite{Do, Sh1}).  Mumford's topological version of Zariski's main theorem \cite[III.9]{Mu} shows that for every point $x$ in a normal, affine variety $X$, and every neighborhood $V$ of $x$ in $X$ (in the Euclidean topology), there exists a neighborhood $V' \subset V$ such that $V\cap X^{sm}$ is path connected.

We now show that in the compact case, there are contractible neighborhoods around 
certain reducibles that remain simply connected after removing the reducible locus.  When $K = \Um{n}$, this lemma actually applies to \e{all} reducibles, since by \cite[Lemma 4.3]{Ramras2}, every subgroup of $ \Um{n}$ has connected centralizer.

\begin{lem}\label{locallysimplyconnected} Let $K$ be a compact Lie group.  Suppose that either $r\geqs3$, or $\mathrm{Rank}(DK)\geqs 2$ and $r\geqs 2$.  Let $\rho \in \hom(\F_r, K)$ be a representation such that $Stab (\rho)/Z(K)$ is path connected, where $Stab (\rho) \leqs K$ is the stabilizer of $\rho$ under the adjoint action.
Then for every neighborhood $U \subset \XC{r} (K)$ containing $[\rho]$, there exists a contractible neighborhood $V \subset U$, with $[\rho]\in V$, such that $V^{irr} := V\cap \X_r (K)^{irr}$ is simply connected.
\end{lem}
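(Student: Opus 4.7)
The plan is to use the slice theorem for compact Lie group actions to reduce to a linear local model, combine this with the codimension estimate of Theorem~\ref{gencodim}, and then apply the classical fact that proper actions of connected Lie groups induce surjections on fundamental groups. To begin, I would apply the slice theorem to the conjugation action of $K$ on $\hom(\F_r,K)\isom K^r$ at $\rho$. Let $H := \mathrm{Stab}(\rho)\leqs K$, and let $S$ denote the slice representation, i.e.\ the linear $H$-representation arising from the adjoint action of $H$ on $T_\rho K^r/T_\rho(K\rho)$. Choosing an $H$-invariant inner product on $S$ and an $H$-invariant open ball $W\subset S$ about the origin of sufficiently small radius, the slice theorem produces an $H$-equivariant open embedding $W\hookrightarrow\hom(\F_r,K)$ whose $K$-saturation is a tubular neighborhood of the orbit $K\rho$; this descends to a homeomorphism of $W/H$ onto an open neighborhood of $[\rho]$ in $\XC{r}(K)$. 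Choosing the radius small enough that the image lies in $U$ and setting $V:=W/H$, the $H$-equivariant linear straight-line homotopy $(w,t)\mapsto(1-t)w$ descends to a contraction of $V$ onto $[0]$, so $V$ is contractible.

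Under this slice identification the reducible locus pulls back to an $H$-invariant real algebraic subset $W^{red}\subset W$, and $V^{irr}=(W\setminus W^{red})/H$. Since the slice realizes a neighborhood of the orbit $K\rho$ as the bundle $K\times_H W$, we have $\codim_W W^{red} = \codim_{\hom(\F_r,K)} \hom(\F_r,K)^{red}$, which is at least $4$ by Theorem~\ref{gencodim} together with the hypothesis on $r$. The semialgebraic set $W^{red}$ admits a finite Whitney stratification by smooth strata of codimension $\geqs 4$ in $W$, and a standard transversality argument shows that any continuous map $D^2\to W$ can be homotoped rel $\partial D^2$ to avoid $W^{red}$. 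Since $W$ is a ball, this gives $\pi_1(W\setminus W^{red})=0$.

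Finally, $Z(K)\leqs H$ acts trivially on $S$ (as it lies in the kernel of the adjoint representation of $K$), so the $H$-action on $W\setminus W^{red}$ factors through an action of $H/Z(K)$, which is path-connected by hypothesis and hence is a connected compact Lie group acting properly on $W\setminus W^{red}$. The classical fact that proper actions of connected Lie groups induce surjections on fundamental groups (see, e.g., Bredon's \emph{Introduction to Compact Transformation Groups}) then yields $\pi_1(V^{irr})=0$. The principal technical hurdles are justifying the stratified transversality step and the invocation of the quotient $\pi_1$-surjection for non-free proper actions; the degenerate case where $\rho$ is itself irreducible is harmless, since the hypothesis then forces $H=Z(K)$, so $V^{irr}=V$ is already contractible.
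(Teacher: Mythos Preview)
Your proof is correct and follows essentially the same route as the paper: slice theorem, the codimension estimate from Theorem~\ref{gencodim}, transversality in the linear slice to get $\pi_1(W\setminus W^{red})=0$, and then a $\pi_1$--surjection for the quotient by the connected group $H/Z(K)$ (the paper phrases this last step as path-lifting with path-connected fibers, citing the same source in Bredon).

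The one genuine difference worth noting is your proof of contractibility of $V$. The paper invokes the Conner Conjecture (Oliver's Theorem) to conclude that $D_\epsilon/\mathrm{Stab}(\rho)$ is contractible. Your argument---that the $H$--equivariant linear retraction $(w,t)\mapsto (1-t)w$ descends to a contraction of $W/H$---is more elementary and entirely sufficient here, since the slice action is linear by construction. This avoids a rather heavy hammer. One minor imprecision: you assert $\codim_W W^{red}=\codim_{\hom(\F_r,K)}\hom(\F_r,K)^{red}$, but only the inequality $\geqs$ is both true in general and needed (the local dimension of the reducible locus is at most the global one); the paper is careful to argue only this direction.
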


\begin{proof}  Let $K$ act by conjugation on $\hom(\F_r, K) \cong K^r$, and choose a $K$--invariant metric on the tangent bundle $T(\hom(\F_r, K))$.  Let 
$$O_\rho\subset \hom(\F_r, K)$$ 
denote the conjugation orbit of $\rho$, and let $T_\rho (O_\rho) \subset T_\rho (\hom(\F_r, K))$ denote the tangent space to the orbit at $\rho$.
Let 
$$K\cross_{Stab (\rho)} D_\epsilon = (K \cross D_\epsilon)/Stab (\rho),$$ 
 where $D_\epsilon$ is the ball of radius $\epsilon$ in the orthogonal complement of $T_\rho (O_\rho)$ inside $T_\rho (\hom(\F_r, K))$
and the action is given by
$s \cdot (k, x) = (ks^{-1}, \mathrm{ad}(s) x)$.  Note that $K$ acts on $K\cross_{Stab (\rho)} D_\epsilon$ via multiplication on the left.
By the Slice Theorem~\cite[Theorem B.24]{GGK}, there exists an $\epsilon > 0$ and a $K$--equivariant diffeomorphism 
$$\phi \co K\cross_{Stab (\rho)} D_\epsilon \srm{\isom} \wt{V}$$
onto a neighborhood $\wt{V}$ of $\rho$ in $\hom(\F_r, K)$.  
By $K$--equivariance, this diffeomorphism descends to a homeomorphism from $(K\cross_{Stab (\rho)} D_\epsilon)/K \isom D_\epsilon/Stab (\rho)$ onto a neighborhood $V$ of $[\rho]$ in $\XC{r} (K)$.  
By shrinking $\epsilon$ if necessary, we may assume that $V\subset U$.  

We claim that $V$ satisfies the desired properties.  First, $Stab (\rho)$ is a compact Lie group, so $D_\epsilon/Stab (\rho)$ is contractible by the Conner Conjecture (Oliver's Theorem)~\cite{Oliver}.  Next, we must show that $V^{irr}$ is simply connected.  Let $D_\epsilon^{red} = \{x\in D_\epsilon \,:\, \phi ([1_K, x]) \in \hom(\F_r, K)^{red}\}$, where $1_K$ is the identity element in $K$.

Define
$$(K\cross_{Stab(\rho)} D_\epsilon)^{red} := \{[k,x] \in K\cross_{Stab(\rho)} D_\epsilon \,:\, \phi ([k, x]) \in \hom(\F_r, K)^{red}\}.$$  
We claim that the (real) codimension\footnote{Since $\hom(\F_r, K)^{red}$ is an algebraic subset of $\hom(\F_r, K)$, it can be written as a union of locally closed submanifolds (see \cite{BCR}), and hence the same is true of $D_\epsilon^{red}$ and $(K\cross_{Stab(\rho)} D_\epsilon)^{red}$; the dimensions of these spaces then refer to the maximum dimension of one of these submanifolds.} of $D_\epsilon^{red}$ in $D_\epsilon$ is the same as the codimension of 
$(K\cross_{Stab(\rho)} D_\epsilon)^{red}$ in $(K\cross_{Stab(\rho)} D_\epsilon)$.
Since $\hom(\F_r, K)^{red}$ is a $K$--invariant subset of $\hom(\F_r, K)$ and $\phi$ is $K$--equivariant, we have
$$(K\cross_{Stab(\rho)} D_\epsilon)^{red}  = K\cross_{Stab(\rho)} D_\epsilon^{red},$$
and since $Stab (\rho)$ acts freely on $K\cross D_\epsilon$, we have
 $$\dim_\bbR (K\cross_{Stab(\rho)} D_\epsilon) - \dim_\bbR (K\cross_{Stab(\rho)} D_\epsilon)^{red}=$$ $$\dim_\bbR (K) + \dim_\bbR (D_\epsilon) -\dim_\bbR (Stab (\rho))
- \left[\dim_\bbR (K) + \dim_\bbR (D_\epsilon)^{red} - \dim_\bbR (Stab (\rho))\right]$$
$$ = \dim_\bbR  D_\epsilon -  \dim_\bbR D_\epsilon^{red}.$$
Now, $\dim_\bbR (K\cross_{Stab(\rho)} D_\epsilon) = \dim_\bbR \hom(\F_r, K)$, while 
$$\dim_\bbR (K\cross_{Stab(\rho)} D_\epsilon)^{red} \leqs \dim_\bbR  \hom(\F_r, K)^{red},$$ 
so
the codimension of $(K\cross_{Stab(\rho)} D_\epsilon)^{red}$ in $(K\cross_{Stab(\rho)} D_\epsilon)$ is at least the codimension of $\hom(\F_r, K)^{red}$ in $\hom(\F_r, K)$, which is at least 4 by Theorem~\ref{gencodim}.  

We claim that $D_\epsilon^{irr} := D_\epsilon \setminus D_\epsilon^{red}$ is simply connected.  Given $x, y\in D_\epsilon^{irr}$, there exists a path between them in $D_\epsilon$, and by transversality\footnote{Our use of transversality in this context is analogous to \cite[Corollary 4.8]{Ramras}.  Here we are using the stratification of a real algebraic set by locally closed submanifolds, as in the previous footnote.}, there exists a path that avoids the subset $D_\epsilon^{red}$.  Similarly, every loop $\gamma$ in $D_\epsilon^{irr}$ is nullhomotopic in $D_\epsilon$, and applying transversality again shows that there is a nullhomotopy of $\gamma$ that avoids $D_\epsilon^{red}$.

The map $\pi\co D_\epsilon \to D_\epsilon/Stab(\rho) \srt{\isom} V$ restricts to a surjection $D_\epsilon^{irr} \to V^{irr}$, so $V^{irr}$ is path connected.  Moreover, since $Stab(\rho)$ is a compact Lie group, the  map 
$\pi$ satisfies path-lifting, as does its restriction $D_\epsilon^{irr} \to V^{irr}$.
The fibers of this map are quotients of $Stab (\rho)/Z(K)$, and hence are path connected by hypothesis.
It follows that the induced map $\pi_1 (D_\epsilon^{irr}) \to \pi_1 (V^{irr})$ is surjective~\cite[II.6]{Br}.  Since $D_\epsilon^{irr}$ is simply connected, we conclude that the same is true for $V^{irr}$.
\end{proof}

\begin{rem} 
We expect the above lemma to remain valid when $G$ is a reductive $\C$--group, as a consequence of the Luna Slice Theorem.  For example, let $G=\SLm{n}$.  Then in \cite{FlLa2}, the Luna Slice Theorem is used to show that at a generic singularity $($having stabilizer $\C^*$$)$, there exists a model neighborhood of the form $\C^N\times \mathcal{C}_\C(\cp^{m}\times\cp^{m})$ for appropriate values of $N$ and $m$, where $\mathcal{C}_\C(\cp^{m}\times\cp^{m})$ is the affine cone over $\cp^{m}\times\cp^{m}$.  The complement of the unique singularity in this affine cone $($the cone point$)$ is a bundle over $\cp^{m}$ with fiber $\C^{m+1}-0$.\footnote{Although it makes no difference to any result in \cite{FlLa2}, we take this opportunity to note that this sentence corrects Lemma 3.18 $(ii)$ in \cite{FlLa2}.}  Consequently, we see that the set of smooth points $($in this case equal to the irreducible points$)$ of this local $($contractible$)$ model is simply connected as long as $m>0$.  The above theorem achieves this in the compact case without knowing the specific homeomorphism type of the local model. 
\end{rem}

\section{Comparisons between homotopy groups}\label{fibration}

In this section, we compare the homotopy groups of the $G$--character varieties $\XC{r} (G):=\hom(\F_r,G)\aq G$, and of their various loci, to those of the corresponding $DG$, $K$, and $DK$ moduli spaces (see Theorem \ref{higherhomotopy} below). 

The following result allows us to ignore basepoints throughout our discussion of homotopy groups.

\begin{lem}\label{pi_0}
 Let $G$ be a connected reductive $\C$--group or a connected compact Lie group.
 Then both $\XC{r} (G)$ and $\XC{r}(G)^{irr}$ are path connected.
 \end{lem}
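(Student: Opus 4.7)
The first claim is immediate. Since $G$ is connected, $\hom(\F_r,G) = G^r$ is path-connected, and $\XC{r}(G)$ is the continuous image of $\hom(\F_r,G)$ under the GIT quotient map (in the reductive $\C$-case) or the ordinary orbit map (in the compact case); continuous surjective images of path-connected spaces are path-connected.

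For the second claim, it suffices to show that $\hom(\F_r, G)^{irr}$ is path-connected, since the quotient map restricts to a continuous surjection onto $\XC{r}(G)^{irr}$. One may assume $r \geqs 2$: for $r=1$ and non-abelian $G$ every element lies in a Borel subgroup (so the irreducible locus is empty), while for abelian $G$ the irreducible locus coincides with all of $\hom(\F_r,G)$ by the footnote in Section~\ref{charvar-sec}.

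In the complex reductive case, $\hom(\F_r, G)^{irr}$ is a non-empty Zariski-open subset of the irreducible algebraic variety $G^r$ (as noted before Lemma~\ref{lem22BL}). Hence it is itself irreducible, and being a smooth complex variety, it is connected and therefore path-connected in the Euclidean topology.

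In the compact case, $\hom(\F_r, G)^{red}$ is a real algebraic subset of the connected smooth manifold $G^r$. The codimension estimate carried out in the proof of Theorem~\ref{gencodim} shows that, for $r \geqs 2$, one has $\codim_\R \hom(\F_r, G)^{red} \geqs (r-1)(\dim_\R G - \dim_\R(P \cap G)) \geqs 2$, where $P$ is a maximal parabolic of $G_\C$ (the factor $\dim_\R G - \dim_\R(P \cap G) \geqs 2$ coming from $G_\C/P \isom G/(P\cap G)$ having complex dimension at least $1$). A closed real algebraic subset of codimension at least $2$ in a connected smooth manifold has path-connected complement, by transversality applied to the stratification of the subset into smooth locally closed submanifolds (analogously to its use in Lemma~\ref{locallysimplyconnected}). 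Hence $\hom(\F_r, G)^{irr}$, and therefore $\XC{r}(G)^{irr}$, is path-connected. The main technical point is this compact case: one has to isolate the codim $\geqs 2$ bound from the proof of Theorem~\ref{gencodim} (whose stronger codim $\geqs 4$ conclusion requires extra rank hypotheses) and invoke stratified transversality on the real algebraic reducible locus.
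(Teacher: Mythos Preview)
Your proof is correct and follows essentially the same strategy as the paper: path-connectivity of $\XC{r}(G)$ via the continuous image of $G^r$, the complex irreducible locus via Zariski-openness in an irreducible variety, and the compact irreducible locus via a codimension bound plus transversality.

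The one noteworthy difference is in the compact case. The paper invokes Theorem~\ref{gencodim} as stated (real codimension $\geqs 4$), which requires either $r\geqs 3$ or $\mathrm{Rank}(DG)\geqs 2$, and then handles the residual case $r=2$, $\mathrm{Rank}(DG)\leqs 1$ separately ``by hand using Corollary~\ref{P-cor}.'' You instead extract from the \emph{proof} of Theorem~\ref{gencodim} the weaker bound $\codim_\R \hom(\F_r,G)^{red} \geqs (r-1)\cdot 2 \geqs 2$, valid for all $r\geqs 2$ and non-abelian $G$, which already suffices for path-connectivity of the complement. This is a small but genuine streamlining: it avoids any low-rank case analysis. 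The paper's route has the advantage of citing a clean statement rather than reaching into a proof, but your version is logically more uniform.
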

\begin{proof}First assume $G$ is complex.
Then $\XC{r} (G)$ is an irreducible algebraic set, hence path connected.  
The irreducible locus is Zariski open in $\hom(\F_r, G)$, and is non-empty (except when $r=1$ and $G$ is non-Abelian, in which case Jordan decomposition shows that $\XC{r}(G)^{irr}$ is empty and hence path connected).  Since $\hom(\F_r, G)$ is an irreducible algebraic set, every non-empty Zariski open subset of $\hom(\F_r, G)$ is path connected, and it follows that $\XC{r}(G)^{irr}$ is also path connected.  

Now assume $G$ is compact.   Since $G$ is path connected, so are $G^r$ and $\XC{r} (G)$.  Theorem~\ref{gencodim} together with transversality (as in Lemma~\ref{locallysimplyconnected}) establishes connectivity of $\XC{r}(G)^{irr}$ except in the cases $r = 1$, or $r=2$ and $\mathrm{Rank}(DG)\leqs 1$.  These low-rank cases can be addressed by hand using Corollary~\ref{P-cor}.
\end{proof}

To analyze the higher homotopy groups, we will use the following result.

\begin{thm}[\cite{FlLa}]\label{deformretract}Let $G$ be connected reductive $\C$--group and $K$ a maximal compact subgroup. Then $\XC{r}(K)$ is a strong 
deformation retract of $\XC{r}(G)$. In particular, these character varieties are homotopy equivalent.
\end{thm}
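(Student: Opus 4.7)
The plan is to combine the Kempf--Ness theorem with a Cartan decomposition retraction.

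First, I would fix a faithful linear embedding $G \hookrightarrow \GLm{N}(\C)$ with $K = G \cap \Um{N}$, and equip $G^r = \hom(\F_r, G)$ with the $K$-invariant Frobenius norm $\|\rho\|^2 = \sum_{i=1}^{r}\mathrm{tr}(\rho(e_i)^* \rho(e_i))$. Letting $\M \subset G^r$ denote the Kempf--Ness minimal set, namely the locus where $\|\cdot\|^2$ attains its minimum on the corresponding $G$-conjugation orbit, the Kempf--Ness theorem ensures that $\M \subset \hom(\F_r, G)^*$, that each closed orbit meets $\M$ in a single $K$-orbit, and that the inclusion induces a homeomorphism $\M/K \xrightarrow{\cong} \XC{r}(G)$.

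Next, because $\mathrm{tr}(g^* g) \geqs N$ for $g$ in the image of $G$, with equality iff $g \in \Um{N}$, the squared norm attains its absolute minimum $rN$ exactly on $K^r$; hence $K^r \subset \M$ and $\XC{r}(K) = K^r/K$ sits inside $\M/K \cong \XC{r}(G)$ via the obvious inclusion. To produce the deformation retraction, I would contract along the $\mathfrak{p}$-factor of the Cartan decomposition $G \cong K \cdot \exp(\mathfrak{p})$: using the unique factorization $g_i = u_i \exp(p_i)$ with $u_i \in K$ and $p_i \in \mathfrak{p}$, define
$$F_t\bigl(u_1 e^{p_1}, \ldots, u_r e^{p_r}\bigr) = \bigl(u_1 e^{(1-t) p_1}, \ldots, u_r e^{(1-t) p_r}\bigr), \qquad t \in [0,1].$$
Since $K$-conjugation sends $u_i e^{p_i}$ to $(k u_i k^{-1}) \exp(\Ad(k) p_i)$, the map $F_t$ is $K$-equivariant; moreover $F_0 = \Id$, $F_1(\M) \subset K^r$, and $F_t$ fixes $K^r$ pointwise.

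The main obstacle, and the heart of the argument, is verifying that $F_t(\M) \subset \M$ for every $t \in [0,1]$. This amounts to showing that contracting the positive Hermitian factors preserves the Kempf--Ness condition of minimizing the squared norm on the $G$-orbit, which is a statement about the moment map for the conjugation action of $K$ on $G^r$. I would establish it through a direct analysis of the behavior of $\|\cdot\|^2$ along the path $t \mapsto F_t(\rho)$ together with the characterization of Kempf--Ness minimizers via vanishing of the moment map. Once this is in hand, passing to $K$-quotients converts $F$ into a strong deformation retraction $\XC{r}(G) \cong \M/K \longrightarrow K^r/K = \XC{r}(K)$, proving the theorem.
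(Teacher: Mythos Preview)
This theorem is not proved in the present paper; it is quoted from \cite{FlLa}. Your overall plan---identify $\XC{r}(G)$ with $\M/K$ via Kempf--Ness and then retract $\M$ onto $K^r$---matches that reference. But the step you flag as ``the main obstacle'' is not merely hard; it is false for the retraction you wrote down: the coordinatewise Cartan contraction $F_t$ does \emph{not} preserve $\M$. For $G=\GLm{2}$, $r=2$, the Kempf--Ness (vanishing moment map) condition is $[A_1,A_1^*]+[A_2,A_2^*]=0$. Take $U=\left(\begin{smallmatrix}0&1\\1&0\end{smallmatrix}\right)\in\Um{2}$ and $A_i=UP_i$ with $P_1=\mathrm{diag}(1,\sqrt{5})$, $P_2=\mathrm{diag}(2\sqrt{2},2)$. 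Then $[A_i,A_i^*]=UP_i^{2}U^{-1}-P_i^{2}$ gives $\mathrm{diag}(4,-4)+\mathrm{diag}(-4,4)=0$, so $(A_1,A_2)\in\M$. Replacing each $P_i$ by $P_i^{1/2}$, the same computation yields $\mathrm{diag}(\sqrt{5}-1,\,1-\sqrt{5})+\mathrm{diag}(2-2\sqrt{2},\,2\sqrt{2}-2)$, whose $(1,1)$ entry is $\sqrt{5}+1-2\sqrt{2}\neq 0$. Thus $F_{1/2}(A_1,A_2)\notin\M$, and no ``direct analysis'' will rescue this particular flow.

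What repairs the argument, and is the extra ingredient in \cite{FlLa}, is the Neeman--Schwarz deformation retraction $\pi\colon G^r\to\M$, which is also $K$--equivariant and fixes $\M$ pointwise. Setting $R_t:=\pi\circ F_t|_{\M}$ one checks immediately that $R_0=\Id_{\M}$, that $R_1(\M)\subset\pi(K^r)=K^r$, and that $R_t|_{K^r}=\Id$ for all $t$; passing to $K$--quotients yields the desired strong deformation retraction of $\XC{r}(G)\cong\M/K$ onto $\XC{r}(K)$. So your outline is salvageable, but the polar contraction alone is the wrong tool for the key step---it must be followed by the flow back onto the Kempf--Ness set.
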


We come to the main result of this section, which reduces the study of higher homotopy groups for these character varieties, and their irreducible loci, to the semisimple case. 

\begin{thm}\label{higherhomotopy} Let $G$ be a connected reductive $\C$--group, with derived subgroup $DG = [G,G]$ and maximal compact subgroup $K$.  Consider any property $\mathrm{P}$ such that if $\rho\in \hom(\F_r, G)$ satisfies $\mathrm{P}$, then $g\rho g^{-1}$ and $\rho\cdot z$ satisfy $\mathrm{P}$, where $g\in G$ and $z\in Z(G)^r$. 
For $m\geqs 2$, or $m=0$, we have isomorphisms:
$$\pi_m(\XC{r}(G))\cong \pi_m(\XC{r}(DG))\cong \pi_m(\XC{r}(DK))\cong \pi_m(\XC{r}(K)).$$ 
For $m\geqs 2$, we have:
$$\pi_m(\XC{r}(G)^\mathrm{P}) \cong\pi_m(\XC{r}(DG)^\mathrm{P}) \textrm{ and } \pi_m(\XC{r}(K)^\mathrm{P})\cong \pi_m(\XC{r}(DK)^\mathrm{P}).$$ \end{thm}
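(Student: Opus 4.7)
The plan is to combine the Serre fibration sequence of Theorem~\ref{P-theorem} with the deformation retraction of Theorem~\ref{deformretract}, exploiting the fact that the base of that fibration is a torus, and therefore aspherical above degree one.

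First I would invoke Theorem~\ref{P-theorem} in both the complex and compact settings to obtain Serre fibration sequences
$$\XC{r}(DG)^{\mathrm{P}} \longrightarrow \XC{r}(G)^{\mathrm{P}} \longrightarrow \XC{r}(G/DG) \isom T^r/F^r$$
and
$$\XC{r}(DK)^{\mathrm{P}} \longrightarrow \XC{r}(K)^{\mathrm{P}} \longrightarrow \XC{r}(K/DK) \isom T_K^r/F_K^r,$$
where $T < G$ and $T_K < K$ are maximal central tori and $F = T\cap DG$, $F_K = T_K \cap DK$ are finite subgroups.

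Next I would observe that since $F$ sits inside the torus $T$ and acts on $T$ by multiplication, the quotient $T/F$ is again a torus of the same dimension; hence $T^r/F^r \isom (T/F)^r$ is a torus, and in particular $\pi_m(\XC{r}(G/DG)) = 0$ for every $m \geqs 2$. The same argument handles $T_K^r/F_K^r$. Taking the long exact sequence in homotopy for each Serre fibration and using this vanishing in degrees $m$ and $m+1$, one extracts the isomorphisms
$$\pi_m(\XC{r}(DG)^{\mathrm{P}}) \isom \pi_m(\XC{r}(G)^{\mathrm{P}}) \quad \textrm{and} \quad \pi_m(\XC{r}(DK)^{\mathrm{P}}) \isom \pi_m(\XC{r}(K)^{\mathrm{P}})$$
for all $m \geqs 2$, which is the second half of the theorem.

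For the first, unrestricted, half, specializing $\mathrm{P}$ to the vacuous property gives the corresponding isomorphisms between $\pi_m(\XC{r}(DG))$ and $\pi_m(\XC{r}(G))$, and between $\pi_m(\XC{r}(DK))$ and $\pi_m(\XC{r}(K))$, for $m \geqs 2$. Theorem~\ref{deformretract}, applied to the pairs $(G, K)$ and $(DG, DK)$, furnishes homotopy equivalences $\XC{r}(G) \heq \XC{r}(K)$ and $\XC{r}(DG) \heq \XC{r}(DK)$, and hence isomorphisms of homotopy groups between these pairs in \emph{all} degrees. Concatenating these four isomorphisms yields the desired four-term equality of $\pi_m$ for $m\geqs 2$, and the case $m=0$ follows immediately from the path-connectedness asserted by Lemma~\ref{pi_0}. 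The only point requiring care is the asphericity of $T^r/F^r$ in degrees $\geqs 2$, but this reduces to the elementary fact that a torus modulo a finite subgroup of itself is again a torus; no other serious obstacle arises because the requisite fibration sequence and deformation retraction have already been established.
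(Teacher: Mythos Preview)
Your proof is correct and follows essentially the same approach as the paper: invoke the Serre fibration of Theorem~\ref{P-theorem}, use that the base $\XC{r}(G/DG)$ is a torus and hence has vanishing $\pi_m$ for $m\geqs 2$, read off the fiber--total space isomorphisms from the long exact sequence, and combine with the deformation retraction of Theorem~\ref{deformretract}. The only cosmetic difference is that you apply Theorem~\ref{deformretract} to both pairs $(G,K)$ and $(DG,DK)$, whereas the paper applies it once to $(G,K)$ and then chains through the fibration isomorphisms; both routes are equally valid.
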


\begin{proof}
By Theorem~\ref{deformretract}, the moduli spaces $\XC{r} (G)$ and $\XC{r} (K)$ have isomorphic $\pi_m$ for all $m\geqs  0$.  Now consider the  long exact homotopy sequence associated to the fibration in Theorem~\ref{P-theorem}, which has the form
$$\cdots\to\pi_m(\XC{r}(DG)^\mathrm{P})\to \pi_m(\XC{r}(G)^\mathrm{P})\to\pi_m(G/DG)\to\cdots\to\pi_0(G/DG) = 0.$$
Since $G/DG$ is a complex torus, $\XC{r} (G/DG)\heq (S^1)^n$ for some $n$, and hence $\pi_m (\XC{r} (G/DG)) = 0$ for $m\geqs  2$.  The long exact  sequence  now gives the desired isomorphism $\pi_m(\XC{r}(DG)^\mathrm{P})\stackrel{\isom}{\maps} \pi_m(\XC{r}(G)^\mathrm{P})$.  
Similarly, $\pi_m (\XC{r}(DK)^\mathrm{P})\stackrel{\isom}{\maps} \pi_m(\XC{r}(K)^\mathrm{P})$.  This completes the proof.
\end{proof}

We end this section by noting that our results regarding homotopy groups of the GIT quotient $\XC{r} (G)$ also apply to the ordinary quotient  $\hom(\F_r, G)/G$, despite the fact that this space need not be Hausdorff.

\begin{prop}\label{def-retr}
Let $G$ be a connected reductive $\C$--group, and let $X$ be a complex affine $G$--variety equipped with the Euclidean topology inherited from the embedding $X\subset \C^N$.  Then there is a strong deformation retraction from $X/G$ to $X^*/G \homeo X\aq G$, where $X^* \subset X$ is the subspace of points whose $G$--orbits are closed.
In particular, $X/G$ is homotopy equivalent to $X\aq G$.
\end{prop}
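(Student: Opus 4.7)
The plan is to build an explicit strong deformation retraction from $X/G$ onto $X^*/G$ using the moment map flow on an ambient linear representation. Since $X$ is affine and $G$ is reductive, I would first choose a $G$--equivariant closed embedding $X \injects V := \C^N$ into some finite-dimensional linear $G$--representation $V$ (this may differ from the given embedding, but the Euclidean topology on $X$ does not depend on the choice). Next, fix a maximal compact subgroup $K \leqs G$ and a $K$--invariant Hermitian inner product on $V$; these produce a moment map $\mu \co V \to \K^*$ for the Hamiltonian $K$--action together with the norm-squared function $f := \norm{\mu}^2$.

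By the Kempf--Ness theorem, the zero locus $V^{\mathrm{min}} := \mu^{-1}(0)$ lies inside $V^*$, meets every closed $G$--orbit in exactly one $K$--orbit, and the inclusion induces a homeomorphism $V^{\mathrm{min}}/K \homeo V^*/G \homeo V\aq G$, with the analogous statements after restriction to $X$. The heart of the argument is the negative gradient flow $\phi_t \co V \to V$ of $f$. The essential geometric input is the standard Kähler-quotient identity $\nabla f(v) = 2 J \cdot \xi_v$ with $\xi := \mu(v) \in \K$ and $\xi_v$ its infinitesimal action on $v$; this forces $\nabla f(v)$ to lie in the tangent space to the $G$--orbit at $v$. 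Consequently each flow line $\{\phi_t(v)\}_{t \geqs 0}$ remains inside $Gv$, and by a theorem of Neeman (via the Łojasiewicz inequality applied to the real-algebraic function $f$) the limit $\phi_\infty(v) := \lim_{t \to \infty} \phi_t(v)$ exists and lies in $V^{\mathrm{min}} \cap \overline{Gv}$, hence in the unique closed orbit of $\overline{Gv}$.

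Because $X$ is $G$--invariant the flow restricts to $X$, and because each $\phi_t$ preserves $G$--orbits it descends for every $t \in [0,\infty]$ to a map $\wt{\phi}_t \co X/G \to X/G$. Reparametrizing $[0,\infty]$ by $s = t/(1+t)$ would then produce a homotopy $H \co X/G \cross [0,1] \to X/G$ with $H_0 = \Id$, $H_1(X/G) \subset X^*/G$, and $H_s|_{X^*/G} = \Id$ for every $s$ (a trajectory starting at a polystable point remains in its closed orbit for all time, by Kempf--Ness). Combined with the homeomorphism $X^*/G \homeo X\aq G$, this would give the claimed deformation retraction and homotopy equivalence $X/G \heq X\aq G$.

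The hard part will be verifying continuity of $H$ at the endpoint $s=1$, equivalently continuity of the descended limit map $\wt{\phi}_\infty$. This is a nontrivial convergence statement that rests on the Kirwan--Hesselink stratification of $V$ by instability type together with the Łojasiewicz inequality: the limit is continuous on each stratum, and the strata fit together so that the induced map on $X/G$ is globally continuous. As a consistency check, the retraction $H_1$ is forced to agree with the composition of the obvious continuous map $X/G \to X\aq G$ (from the universal property of the GIT quotient) with the homeomorphism $X\aq G \homeo X^*/G$, giving an independent route to continuity of the endpoint.
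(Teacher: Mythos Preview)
Your approach is correct but takes a much heavier route than the paper, and in fact collapses to something simpler than you seem to realize. Since each $\phi_t$ (finite $t$) keeps every point inside its own $G$--orbit, the descended map $\wt{\phi}_t$ on $X/G$ is the \emph{identity} for every $s<1$. So after passing to the quotient your flow is not gradual at all: it is the ``constant then jump'' homotopy $H([x],s)=[x]$ for $s<1$ and $H([x],1)=f([x])$, where $f$ sends $[x]$ to the class of the unique closed orbit in $\overline{Gx}$. All of the moment-map machinery, Neeman's convergence theorem, and the \L ojasiewicz inequality are being used only to construct this endpoint map $f$---which, as your own final paragraph observes, is simply the composite $X/G\to X\aq G\srm{\isom} X^*/G\injects X/G$ and is continuous for soft reasons.

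The paper defines $H$ directly in this ``constant then jump'' form and proves continuity by a short point-set argument: if $C\subset X$ is closed and $G$--invariant and $x\in C$, then $\overline{Gx}\subset C$, so $f([x])\in C/G$ as well; hence $H^{-1}(C/G)=\bigl((C/G)\times[0,1]\bigr)\cup\bigl(f^{-1}(C/G)\times\{1\}\bigr)$ is closed. No K\"ahler geometry, no stratifications. What your approach buys is a genuine geometric deformation upstairs on $X$ itself (the Kempf--Ness retraction), but that extra structure is invisible on $X/G$, where the statement is really about the specialization order on a non-Hausdorff space rather than about any flow.
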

\begin{proof} We write $Gx$ to denote the $G$--orbit of $x\in X$, viewed as a subspace of $X$, and we write $[x]$ to denote the corresponding point in $X/G$.  By definition of $X^*$, $Gx$ is closed in $X$ if and only if $x\in X^*$, and furthermore these conditions hold if and only if $\{[x]\}$ is closed in $X/G$.
Recall that for each $G$--orbit $G x \subset X$, there exists a unique closed $G$--orbit $Gy \subset X^*$ such that $Gy \subset \overline{Gx}$, where $\overline{Gx}$ denotes the closure of this set in $X$, and the map $f\co X/G\to X/G$ given by $f([x]) = [y]$ is continuous.\footnote{This can be seen as follows.  Recall that $X\aq G$ is the quotient of $X/G$ by the relation $[x]\sim [y]$ if and only if $\overline{Gx} \cap \overline{Gy} \neq \emptyset$.  Let $\pi\co X/G\to X\aq G$ denote the  quotient map, and $i\co X^*/G \to X/G$ the inclusion.  As  discussed in the Introduction,
$\pi \circ i$ is a homeomorphism.  Now $f$ factors as $i\circ (\pi \circ i)^{-1} \circ \pi$.}

The deformation retraction  $H\co (X/G)\cross [0,1] \to X/G$ is defined by $H([x], t) = [x]$ for $t<1$, and $H([x], 1) = f([x])$.  Note that when $x\in X^*$, $f([x]) = [x]$, so we just need to check that $H$ is a continuous mapping.  Each closed set in $X/G$ has the form $C/G$ for some closed, $G$--invariant set $C \subset X$, and it suffices to show that $H^{-1} (C/G)$ is closed in $(X/G)\cross [0,1]$.  We have
$$H^{-1} (C/G) = \left((C/G)\cross [0,1)\right) \cup \left(f^{-1} (C/G) \cross \{1\}\right).$$
Say $x\in C$.  Since $C$ is $G$--invariant, we have $Gx \subset C$, and since $C$ is closed, we have 
$\overline{Gx} \subset C$.
Writing $f([x]) = [y]$, we have $y\in C$ as well, since $Gy \subset \overline{Gx} \subset C$.  Hence $H([x],1) = [y] \in C/G$ as well, so in fact we may write
$$H^{-1} (C/G) = \left( (C/G)\cross [0,1] \right) \cup \left(f^{-1} (C/G) \cross \{1\}\right).$$
Since $C/G$ is closed in $X/G$, the first set on the right is closed in $(X/G) \cross [0,1]$, and the second set is closed by continuity of $f$.\end{proof}

\section{The fundamental group}
\label{sah}

The fundamental group of
$\XC{r} (G)$ was first determined in \cite{BiLa}, and can also be obtained from more general considerations in \cite{BiLaRa}.

\begin{thm}[\cite{BiLa}]\label{simplyconnected}Let $G$ be a connected reductive $\C$--group, or a connected compact Lie group.  Then $\XC{r}(G)$ is path connected and 
$$\pi_1(\XC{r}(G))=\pi_1(G/DG)^r.$$
\end{thm}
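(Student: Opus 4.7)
Path-connectedness is already given by Lemma~\ref{pi_0}, so the content is the computation of $\pi_1$. By Theorem~\ref{deformretract}, the complex and compact statements are equivalent, so I would work in the compact setting and write $K$ for our group.

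The main tool is the Serre fibration of Theorem~\ref{P-theorem} taken with vacuous property $\mathrm{P}$:
\[
\XC{r}(DK) \longrightarrow \XC{r}(K) \longrightarrow \XC{r}(K/DK).
\]
Since $K/DK$ is a torus, conjugation on $(K/DK)^r$ is trivial, so $\XC{r}(K/DK) \cong (K/DK)^r$ is itself a torus and hence a $K\bigl(\pi_1(K/DK)^r,1\bigr)$. In particular its higher homotopy groups vanish, and the long exact sequence collapses to the short exact sequence
\[
0 \longrightarrow \pi_1(\XC{r}(DK)) \longrightarrow \pi_1(\XC{r}(K)) \longrightarrow \pi_1(K/DK)^r \longrightarrow 0.
\]
This reduces the theorem to the semisimple assertion
\[
(\ast)\qquad \pi_1(\XC{r}(DK)) = 0 \text{ for } DK \text{ semisimple compact.}
\]

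To prove $(\ast)$, the plan is to pass to the irreducible locus, compute its $\pi_1$ via the orbibundle structure, and then push the computation back to $\XC{r}(DK)$. Concretely: (i) Theorem~\ref{gencodim} gives that $\hom(\F_r,DK)^{red}$ has real codimension $\geq 4$ in $\hom(\F_r,DK)$, hence in $\XC{r}(DK)$ after accounting for the orbit map. Combining this codimension bound with the local neighborhoods supplied by Lemma~\ref{locallysimplyconnected} and the singular general-position result of Proposition~\ref{trans-prop}, the inclusion $\XC{r}(DK)^{irr}\hookrightarrow \XC{r}(DK)$ is $2$-connected, so in particular surjective on $\pi_1$. (ii) Lemma~\ref{lem22BL} realizes $\hom(\F_r,DK)^{irr}\to \XC{r}(DK)^{irr}$ as a $PDK$-orbibundle, giving
\[
\pi_1(PDK)\longrightarrow \pi_1(\hom(\F_r,DK)^{irr})\longrightarrow \pi_1(\XC{r}(DK)^{irr})\longrightarrow 1,
\]
and the real codimension $\geq 2$ part of Theorem~\ref{gencodim} identifies $\pi_1(\hom(\F_r,DK)^{irr})\cong \pi_1(DK^r)=\pi_1(DK)^r$ by transversality on the smooth manifold $DK^r$. (iii) It then remains to show the connecting map $\pi_1(PDK)\to\pi_1(DK)^r$ is surjective; generators of $\pi_1(DK)$ can be represented by loops in a maximal torus $T\subset DK$, and the central subgroup $Z(DK)\subset T$ provides, under the orbibundle action, preimages for each factor.

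The substantive step is $(\ast)$, and within it the main obstacle is part (i): the ambient space $\XC{r}(DK)$ is singular, so ordinary transversality fails and one needs the singular general-position argument of Proposition~\ref{trans-prop}, applied in neighborhoods whose structure is controlled by the Slice Theorem and the Conner Conjecture as in Lemma~\ref{locallysimplyconnected}. Once the irreducible locus is known to carry the full $\pi_1$, the orbibundle computation in (ii)--(iii) is comparatively mechanical.
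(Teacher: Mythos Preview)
The paper does not prove this theorem; it is quoted from \cite{BiLa} (and \cite{BiLaRa}), so there is no in-paper argument to compare against. That said, your proposed proof has a genuine gap, and in fact the paper's own results show why your strategy cannot succeed as written.

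The fatal problem is step (iii). The map $\pi_1(PDK)\to\pi_1(\hom(\F_r,DK)^{irr})\cong\pi_1(DK)^r$ is induced by an orbit-inclusion $[g]\mapsto g\rho g^{-1}$, and Lemma~\ref{nullhomotopy} shows this map is \emph{nullhomotopic}. Hence the induced map on $\pi_1$ is zero, not surjective. Your sequence therefore yields $\pi_1(\XC{r}(DK)^{irr})\cong\pi_1(DK)^r$ (on the good locus; in general a quotient thereof, as in Theorem~\ref{fund-thm}), which is typically nonzero. The torus/center heuristic you sketch does not rescue this: loops in $PDK$ coming from paths in $DK$ ending at central elements map to conjugation loops $t\mapsto\gamma(t)\rho\gamma(t)^{-1}$, and these are contractible in $(DK)^r$ by sliding $\rho$ to the identity.

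Step (i) is also overclaimed. Two-connectedness of $\XC{r}(DK)^{irr}\hookrightarrow\XC{r}(DK)$ via Proposition~\ref{trans-prop} needs condition (3), which you verify through Lemma~\ref{locallysimplyconnected}. But that lemma requires each $\mathrm{Stab}(\rho)/Z(DK)$ to be connected, and Theorem~\ref{disconnected} shows this fails precisely when $\pi_1(DK)\neq 1$. Indeed, combining Theorem~\ref{fund-thm-CI} with the cited result shows that, for CI groups, the inclusion of the irreducible locus is \emph{not} a $\pi_1$-isomorphism when $\pi_1(DK)\neq 1$: the finite group $\pi_1(DK)^r$ survives in $\pi_1(\XC{r}(DK)^{irr})$ but dies in $\pi_1(\XC{r}(DK))$. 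So the killing you need happens exactly at the reducible locus, which your approach avoids.

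A workable route to $(\ast)$ is instead to pass to the simply connected cover $\widetilde{DK}\to DK$ with kernel $\Gamma=\pi_1(DK)\subset Z(\widetilde{DK})$. Then $\XC{r}(\widetilde{DK})$ is simply connected by path-lifting (the total space $\widetilde{DK}^r$ is simply connected and orbits are connected), and $\XC{r}(DK)\cong\XC{r}(\widetilde{DK})/\Gamma^r$. One then shows that $\Gamma^r$ is normally generated by elements with fixed points on $\XC{r}(\widetilde{DK})$, so the quotient remains simply connected. This is essentially the covering-space method of \cite{Lawton-Ramras} and \cite{BiLaRa}.
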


In particular, $\XC{r}(\SLm{n})$ and $\XC{r}(\SUm{n})$ are simply connected, and $$\pi_1(\XC{r}(\GLm{n}))\cong \pi_1(\XC{r}(\Um{n}))\cong \mathbb{Z}^{r}.$$ 
Note that since $G/DG$ is a complex torus, $\pi_1 (G/DG)$ is always a free Abelian group of finite rank.

We proved in Lemma~\ref{pi_0} that the irreducible locus $\XC{r}(G)$, which coincides with the moduli space of stable representations
in the sense of affine GIT, is path connected.
In this section, we describe its fundamental group.

\begin{thm} \label{fund-thm}
Let $G$ be a connected reductive $\C$--group or a connected compact Lie group.  
Assume that either $r\geqs3$, or that $r\geqs2$ and $\mathrm{Rank}(DG)\geqs 2$.  Then 
\begin{equation}\label{pd}\pi_1 (\XC{r} (G)^{irr})\cong \pi_1 (G/DG)^r \times \pi_1 (\XC{r} (DG)^{irr}),\end{equation}
and $\pi_1 (\XC{r} (DG)^{irr})$ is a quotient of the finite Abelian group $\pi_1 (DG)^r$.  
In particular, if $\pi_1(DG)=1$, then $\pi_1 (\XC{r} (DG)^{irr})=1$, and
$$\pi_1 (\XC{r} (G)^{irr})\cong \pi_1(G)^r.$$
\end{thm}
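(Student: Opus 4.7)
The plan is to apply Theorem~\ref{P-theorem} with $\mathrm{P}$ being irreducibility (invariant under conjugation and central multiplication by Corollary~\ref{P-cor}) to produce a Serre fibration sequence
$$\XC{r}(DG)^{irr}\maps \XC{r}(G)^{irr}\maps\XC{r}(G/DG)\isom (G/DG)^r.$$
Since $G/DG$ is a (complex or compact) torus, the base has trivial $\pi_k$ for $k \geqs 2$ and $\pi_1(G/DG)^r$ is free abelian. Combined with the path-connectivity of the fiber (Lemma~\ref{pi_0}), the long exact sequence collapses to
$$0 \maps \pi_1(\XC{r}(DG)^{irr}) \maps \pi_1(\XC{r}(G)^{irr}) \maps \pi_1(G/DG)^r \maps 0.$$

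Next I would bound the kernel as a quotient of $\pi_1(DG)^r$. By Theorem~\ref{gencodim}, $\hom(\F_r, DG)^{red}$ has real codimension at least $4$ in $DG^r$, so transversality (stratifying the reducible locus into smooth submanifolds, as in the proof of Lemma~\ref{locallysimplyconnected}) gives $\pi_1(\hom(\F_r, DG)^{irr}) \isom \pi_1(DG)^r$, which is finite since $DG$ is semisimple. By Lemma~\ref{lem22BL} the quotient map $\hom(\F_r, DG)^{irr} \to \XC{r}(DG)^{irr}$ is a principal $PDG$--orbibundle with path connected structure group, and the orbifold long exact sequence combined with the natural surjection from the orbifold to the topological fundamental group yields a surjection $\pi_1(DG)^r \surjects \pi_1(\XC{r}(DG)^{irr})$. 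In particular the kernel $K$ of the SES above is a finite abelian group.

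Since $\pi_1(G/DG)^r$ is free abelian and $K$ is abelian, any central extension of $\pi_1(G/DG)^r$ by $K$ splits as a direct product; thus it suffices to show the monodromy action of $\pi_1(G/DG)^r$ on $\pi_1(\XC{r}(DG)^{irr})$ is trivial, and this is the main obstacle. Using the identification $\XC{r}(G)^{irr} \isom \XC{r}(DG)^{irr}\times_{F^r} T^r$ from Corollary~\ref{P-cor}, a loop $\gamma$ in $T^r/F^r$ lifts to a path in $T^r$ from $1$ to some $f\in F^r\subset DG^r \cap T^r$, and chasing this lift through the $F^r$--action identifies the monodromy self-map of the fiber as $[\rho]\mapsto [\rho\cdot f]$. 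This self-map is induced by right multiplication by $f$ on $\hom(\F_r, DG)^{irr}$, and since $DG$ is path connected, a choice of paths $\alpha_i$ from $1$ to $f_i$ in $DG$ produces a homotopy on $DG^r$ from right multiplication by $f$ to the identity. Hence the action on $\pi_1(DG^r) \isom \pi_1(\hom(\F_r, DG)^{irr})$ is trivial, and by the equivariant surjection above the action on $\pi_1(\XC{r}(DG)^{irr})$ is trivial too.

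Finally, if $\pi_1(DG) = 1$ then $\pi_1(\XC{r}(DG)^{irr}) = 1$, and the principal $DG$--bundle $DG \to G \to G/DG$ yields $\pi_1(G) \isom \pi_1(G/DG)$ from its long exact sequence (using $\pi_1(DG) = 1$ and $\pi_0(DG) = 0$), so $\pi_1(\XC{r}(G)^{irr}) \isom \pi_1(G)^r$.
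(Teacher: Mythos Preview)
Your overall strategy matches the paper's: use the fibration from Theorem~\ref{P-theorem}, identify the relevant homotopy groups via the codimension estimate of Theorem~\ref{gencodim}, and then split the resulting short exact sequence. However, there is a genuine gap in your splitting step.

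You assert that ``since $\pi_1(G/DG)^r$ is free abelian and $K$ is abelian, any central extension of $\pi_1(G/DG)^r$ by $K$ splits as a direct product,'' and then spend effort proving triviality of the monodromy to make the extension central. But this assertion is false: central extensions of $\Z^n$ by a finite abelian group $A$ are classified by $H^2(\Z^n;A)\cong A^{\binom{n}{2}}$, which is generally nonzero. For instance, the mod--$2$ Heisenberg group gives a non-split central extension $1\to\Z/2\to H\to\Z^2\to 1$. So centrality alone does not force the splitting, and your monodromy computation, while correct, does not finish the argument.

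The paper closes this gap more simply and without any monodromy analysis: it applies the codimension/transversality argument and the path-lifting surjection to $G$ itself (not only to $DG$), obtaining a surjection $\pi_1(G)^r\surjects\pi_1(\XC{r}(G)^{irr})$. Since $\pi_1(G)^r$ is abelian, the middle term of the short exact sequence is abelian, so the sequence lives in the category of abelian groups. There the quotient $\pi_1(G/DG)^r$ is free abelian, hence projective, and the sequence splits. Once you know the middle group is abelian, your monodromy argument becomes unnecessary; conversely, adding this one observation to your proof repairs it completely.
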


We note that the product decomposition (\ref{pd}) is not canonical, as will be apparent from the proof.

\begin{proof}
By Theorem~\ref{gencodim}, the stated hypotheses imply that $\hom(\F_{r},G)^{red}$ has real codimension at least 4 inside of $\hom(\F_{r},G)$, so the inclusion $$\hom(\F_r,G)^{irr}\hookrightarrow \hom(\F_r,G)$$ is an isomorphism on fundamental groups (by transversality; see the discussion in Theorem \ref{locallysimplyconnected}).

We therefore have the following commutative diagram, in which the bottom row comes from the fibration sequence in Theorem \ref{P-theorem}:
\[
\xymatrix{\pi_{1}(DG)^r\ar@{=}[d]\ar@{^{(}->}[r] & \pi_{1}(G)^r\ar@{->>}[r]\ar@{=}[d] &\pi_1(G/DG)^r\ar@{=}[d]\\
\pi_1(\hom(\F_r,DG)^{irr})\ar@{^{(}->}[r] \ar@{->>}[d] & \pi_1(\hom(\F_r,G)^{irr})\ar@{->>}[r] \ar@{->>}[d]&\pi_1(\hom(\F_r,G/DG))\ar[d]^{\cong}\\
\pi_1(\XC{r}(DG)^{irr})\ar@{^{(}->}[r]& \pi_1(\XC{r}(G)^{irr})\ar@{->>}[r]&\pi_1(\XC{r}(G/DG)).
}
\] 

The vertical maps are surjective by a path-lifting argument similar to that in the proof of Theorem \ref{locallysimplyconnected}.  Since the fundamental group of a Lie group is Abelian, we conclude that the fundamental groups on the bottom row are also Abelian.  Since $\pi_1 (\XC{r} (G)) \isom \pi_1(G/DG)^r$ is free Abelian, the exact sequence on the bottom row splits (non-canonically) as 
$$\pi_1 (\XC{r} (G)^{irr})\cong\pi_1 (G/DG)^r\times \pi_1 (\XC{r} (DG)^{irr}),$$ 
as claimed.  Surjectivity implies that $\pi_1 (\XC{r} (DG)^{irr})$ is a quotient of $\pi_1(DG)^r$, which  is a finite Abelian group by Weyl's Theorem, and is trivial if $DG$ is simply connected.  Hence $\pi_1(\XC{r}(G)^{irr})\cong \pi_1(G/DG)^r \cong \pi_1(G)^r$ when $\pi_1 (DG) = 1$.  
\end{proof}

Recall that a subgroup $H < G$ is \e{irreducible} if it is not contained in any proper parabolic subgroup of $G$ (so a representation $\rho\co \F_r \to G$ is irreducible if and only if its image is an irreducible subgroup).  
Following Sikora in \cite{Si4}, we say $G$ has property {\it CI} if the centralizer of every irreducible subgroup of $G$ coincides with the center of $G$.  Clearly, $\SLm{n}$ and $\GLm{n}$ are CI, by Schur's lemma.  Sikora (\cite[Question 19]{Si4}) asks if there are any other reductive $\C$--groups with this property.

In \cite{Si4} and \cite{FlLa2} it is shown that orthogonal and symplectic groups are not CI, and in \cite{Si4} that finite quotients of groups that are not CI are not CI.  Moreover, it is clear that $G\times H$ is not CI if either $G$ or $H$ is not CI. 
We also extend the definition of CI group to the case of compact Lie groups.

\begin{thm} \label{fund-thm-CI}
Let $G$ be a connected reductive $\C$--group or a connected compact Lie group with property CI.  Assume that either $r\geqs3$, or that $r\geqs2$ and $\mathrm{Rank}(DG)\geqs 2$. 
Then 
$$\pi_1 (\XC{r} (G)^{irr})\cong \pi_1(G)^r,$$
and it follows that
$$\pi_1 (\XC{r} (DG)^{irr}) =\pi_1(DG)^r.$$
\end{thm}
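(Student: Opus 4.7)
My strategy is to upgrade the orbibundle of Lemma~\ref{lem22BL} to an honest principal $PG$-bundle using the CI hypothesis, then to compute the connecting map in its long exact homotopy sequence explicitly. First I would observe that CI forces the $PG$-stabilizer of every irreducible representation $\rho$ to be trivial: if $\rho(\F_r)$ is irreducible, then by CI $C_G(\rho(\F_r))=Z(G)$, so $\mathrm{Stab}_{PG}(\rho)=C_G(\rho(\F_r))/Z(G)=1$. Hence $\hom(\F_r,G)^{irr}=\hom(\F_r,G)^{good}$, and Lemma~\ref{lem22BL} delivers a principal $PG$-bundle
$$PG \maps \hom(\F_r,G)^{irr} \maps \XC{r}(G)^{irr}.$$
Since $PG$ is connected, the tail of the long exact homotopy sequence reads
$$\pi_1(PG) \xmaps{\mu_*} \pi_1(\hom(\F_r,G)^{irr}) \maps \pi_1(\XC{r}(G)^{irr}) \maps 0,$$
and Theorem~\ref{gencodim} combined with transversality (as used in the proof of Theorem~\ref{fund-thm}) identifies $\pi_1(\hom(\F_r,G)^{irr})$ with $\pi_1(G)^r$.

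The heart of the argument is to show $\mu_*=0$. The orbit map $\mu$ sends $[g]$ to $(g\rho(e_1)g^{-1},\ldots,g\rho(e_r)g^{-1})$, and right-translating its $i$-th component by $\rho(e_i)^{-1}$ produces the commutator map $\kappa_i\co PG \to DG$, $[g]\mapsto [g,\rho(e_i)]$, which is well-defined on $PG$ since $Z(G)$ commutes with $\rho(e_i)$. To null-homotope $\kappa_i$, I would use path-connectedness of $G$ to pick a path $\beta\co [0,1]\to G$ from $\rho(e_i)$ to $e$ and form
$$H\co PG\cross[0,1] \maps DG, \qquad H([g],s) = g\beta(s)g^{-1}\beta(s)^{-1}.$$
This $H$ is $Z(G)$-invariant (since $\beta(s)$ commutes with the centre), lands in $DG$ (commutators do), fixes the basepoint $H([e],s)=e$, and interpolates between $\kappa_i$ at $s=0$ and the constant map $e$ at $s=1$. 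Consequently each $\kappa_i$ is based-null-homotopic, so $\mu_*=0$ and the exact sequence yields $\pi_1(\XC{r}(G)^{irr}) \isom \pi_1(G)^r$.

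For the second assertion, I would invoke Theorem~\ref{fund-thm}, which identifies $\pi_1(\XC{r}(DG)^{irr})$ with the kernel of $\pi_1(\XC{r}(G)^{irr}) \surjects \pi_1(\XC{r}(G/DG))\isom \pi_1(G/DG)^r$. The fibration $DG\to G\to G/DG$ (with $\pi_2(G/DG)=0$, since $G/DG$ is a torus) produces a short exact sequence $0\to \pi_1(DG)\to \pi_1(G)\to \pi_1(G/DG)\to 0$, which splits because $\pi_1(G/DG)$ is free abelian; hence $\pi_1(G)^r \isom \pi_1(DG)^r\oplus \pi_1(G/DG)^r$. Combining with the first conclusion identifies the kernel as $\pi_1(DG)^r$. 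Equivalently, since CI of $G$ implies CI of $DG$ (because $C_G(H)=C_{DG}(H)\cdot Z(G)$ for $H\leqs DG$ forces $C_{DG}(H)\leqs Z(DG)$), one could apply the argument above with $DG$ in place of $G$.

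The main obstacle I foresee is the null-homotopy step for $\mu_*$: since $\mu$ is not a group homomorphism, the usual argument that inner automorphisms act trivially on $\pi_1$ does not directly apply, and I must verify carefully that $H$ descends to $PG$, lands in $DG$, and preserves basepoints throughout, so that the conclusion about $\pi_1$ goes through.
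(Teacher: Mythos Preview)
Your proof is correct and follows essentially the same route as the paper: use CI to identify $\XC{r}(G)^{irr}$ with $\XC{r}(G)^{good}$, invoke the principal $PG$--bundle from Lemma~\ref{lem22BL}, identify $\pi_1(\hom(\F_r,G)^{irr})\cong\pi_1(G)^r$ via the codimension estimate of Theorem~\ref{gencodim}, and show the orbit map $PG\to G^r$ is null-homotopic by dragging $\rho$ to the identity along a path in $G^r$. Your coordinate-wise commutator formulation of the null-homotopy is just a basepoint-careful rewriting of the paper's Lemma~\ref{nullhomotopy} (which moves $\rho_t\to 1$ directly), and your derivation of the $DG$ statement from Theorem~\ref{fund-thm} matches the paper's appeal to~(\ref{pd}); your alternative observation that CI for $G$ implies CI for $DG$ is a nice addition not made explicit in the paper.
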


For the proof, we need a lemma, provided to us by I. Biswas (personal communication).

\begin{lem}$\label{nullhomotopy}$ Let $G$ be a connected Lie group and let $PG = G/Z(G)$ act on $\hom(\F_r, G) = G^r$ by conjugation.  Then for each $\rho \in G^r$, the map $PG\to G^r$ given by $[g]\mapsto [g]\rho[g]^{-1}$ is nullhomotopic.
\end{lem}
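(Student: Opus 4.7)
The plan is to exhibit an explicit nullhomotopy of the orbit map $[g]\mapsto g\rho g^{-1}$ by continuously deforming the ``basepoint'' $\rho$ to the identity of $G^r$ while simultaneously conjugating by $g$. The key observation is that when $\rho$ is replaced by the identity element $e=(1_G,\ldots,1_G)\in G^r$, the resulting orbit map $g\mapsto geg^{-1}=e$ is constant; so to build a nullhomotopy of the orbit map through $\rho$ it suffices to interpolate continuously between $\rho$ and $e$ and use the action to promote this to a homotopy of maps.

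Since $G$ is connected, so is $G^r$, and I would choose a continuous path $\gamma\co[0,1]\to G^r$ with $\gamma(0)=\rho$ and $\gamma(1)=e$. I will then define
$$\wt{H}\co G\times[0,1]\maps G^r, \qquad \wt{H}(g,t)=g\,\gamma(t)\,g^{-1},$$
where $g\in G$ acts on $G^r$ diagonally. Continuity of $\wt{H}$ reduces to continuity of the diagonal conjugation action $G\times G^r\to G^r$, and by construction $\wt{H}(g,0)=g\rho g^{-1}$ while $\wt{H}(g,1)=e$. Hence $\wt{H}$ is a homotopy, along the lift $G\to PG$, from $g\mapsto g\rho g^{-1}$ to the constant map at $e$. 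Because central elements of $G$ act trivially by conjugation on $G^r$, the homotopy $\wt{H}$ factors through the quotient map $G\to PG$, yielding the required nullhomotopy $H\co PG\times[0,1]\to G^r$ of $[g]\mapsto [g]\rho[g]^{-1}$.

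There is no substantive obstacle here: the proof rests only on the path-connectedness of $G^r$ (immediate from the connectedness of $G$) and the continuity of the conjugation action, together with the observation that conjugation descends to $PG$. Notably, no Lie-theoretic structure beyond that of a connected topological group is used.
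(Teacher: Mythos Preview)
Your proof is correct and is essentially identical to the paper's own argument: the paper also chooses a path $\rho_t$ in $G^r$ from $\rho$ to the identity and observes that $[g]\mapsto [g]\rho_t[g]^{-1}$ is the desired nullhomotopy. You have simply spelled out the details (continuity, descent to $PG$) a bit more explicitly.
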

\begin{proof} Since $G$ is connected, we may choose a path $\rho_t$ from $\rho\in G^r$ to the identity, and $[g]\mapsto [g]\rho_t [g]^{-1}$ is a nullhomotopy.
\end{proof}

\noindent \emph{Proof of Theorem \ref{fund-thm-CI}.}
If $G$ is CI, then $\XC{r} (G)^{irr} =  \XC{r} (G)^{good}$, and by Lemma \ref{lem22BL}
$$PG\to \hom(\F_r, G)^{irr}\to \XC{r}(G)^{irr}$$ 
is a $PG$--bundle (where the first map is the inclusion of an adjoint orbit).  Hence we have an exact sequence
\begin{equation} \label{es}\cdots \to \pi_1(PG)\to \pi_1(\hom(\F_r, G)^{irr})\to \pi_1(\XC{r}(G)^{irr})\to 0.\end{equation}
As discussed at the start of the proof of Theorem~\ref{fund-thm},
the map 
$$\pi_1(\hom(\F_r, G)^{irr}, \rho) \to \pi_1 (\hom(\F_r, G), \rho)$$
is an isomorphism, and the composite
$$\pi_1(PG)\maps \pi_1(\hom(\F_r, G)^{irr}) \maps \pi_1 (\hom(\F_r, G))$$
is zero by Lemma~\ref{nullhomotopy}.  Hence $\pi_1(PG)\to \pi_1(\hom(\F_r, G)^{irr})$ is also zero, and by exactness of (\ref{es}) we conclude
$$\pi_1(\XC{r}(G)^{irr}) \isom \pi_1(\hom(\F_r, G)^{irr})  \isom \pi_1 (G)^r.$$
The final statement then follows from (\ref{pd}).
The case of compact Lie groups is analogous.
$\hfill \Box$

\section{The linear case} \label{linear-sec}

In this section, $G_n$ will denote one of the groups $\GLm{n}, \SLm{n}, \Um{n}$, or $\SUm{n}$.
Here, we study the higher homotopy groups of $\XC{r} (G_n)^{irr}$ and $\XC{r} (G_n)$.

For these groups, a representation in $\hom(\F_r, G_n)$ is good if and only if it is irreducible.  In fact, by \cite{FlLa2}, for $(r-1)(n-1)\geqs 2$, we have $\XC{r} (\GLm{n})^{irr} = \XC{r} (\GLm{n})^{sm}$  and $\XC{r} (\SLm{n})^{irr} = \XC{r} (\SLm{n})^{sm}$.  For this reason, we will focus on the cases $(r-1)(n-1)\geqs 2$ in what follows.  In the compact cases,
we define $\XC{r}(\Um{n})^{sm} = \XC{r} (\GLm{n})^{sm} \cap \XC{r}(\Um{n})$, and similarly for $\SUm{n}$.  We note that this subspace coincides with the smooth locus of the character variety, viewed as a real analytic variety.  With the above restrictions on $r$ and $n$ we again have $\XC{r} (\Um{n})^{sm} = \XC{r} (\Um{n})^{irr}$ and $\XC{r} (\SUm{n})^{sm} = \XC{r} (\SUm{n})^{irr}$.

\begin{rem}\label{othercases} 
Suppose $(r-1)(n-1) < 2$. Then $\XC{r} (G_n)$ is a topological manifold $($with non-empty boundary in some cases$)$ and homotopy equivalent to either a product of some number of circles, or a point.  The proof of this is a good exercise for the reader.
\end{rem}

\subsection{Bott periodicity for the irreducible locus}

Since $\XC{r} (G_n)^{irr} = \XC{r} (G_n)^{good}$, Lemma \ref{lem22BL} shows that the sequence
\begin{equation}\label{PG-p}PG_n\maps \hom(\F_r, G_n)^{irr} \maps \XC{r} (G_n)^{irr}\end{equation}
is a principal $PG_n$--bundle.   We will use this fact to show that in a range of dimensions, the homotopy groups of $\X_r (G_n)^{irr}$ are $2$--periodic.

\begin{lem}\label{linear-codim} The real codimension of $\hom(\F_r, G_n)^{red}$ in $\hom(\F_r, G_n)$ is at least $2(r-1)(n-1)$.  Hence, by transversality, the inclusion map induces an isomorphism 
$$\pi_k  \left(\hom(\F_r, G_n)^{irr}\right) \stackrel{\isom}{\maps}  \pi_k  \left(\hom(\F_r, G_n)\right)\isom \pi_k (G_n)^r$$
for $k < 2(r-1)(n-1) -1$.
\end{lem}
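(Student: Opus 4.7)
The plan is to prove the codimension estimate using the general formula from Theorem~\ref{gencodim}, specialized to the linear groups $G_n$, and then invoke a standard transversality argument (of the sort already used in Lemma~\ref{pi_0} and Lemma~\ref{locallysimplyconnected}) to deduce the homotopy statement.

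For the codimension bound in the complex cases $G_n = \GLm{n}(\C), \SLm{n}(\C)$, I would apply the inequality established inside the proof of Theorem~\ref{gencodim}, namely
$$\codim_{\C} \hom(\F_r, G_n)^{red} \geqs (r-1)\bigl(\dim_{\C} G_n - \dim_{\C} P_{max}\bigr),$$
where $P_{max}$ is a proper parabolic of $G_n$ of maximal dimension. The proper parabolics of $\GLm{n}(\C)$ and $\SLm{n}(\C)$ are the stabilizers of proper flags in $\C^n$, and the flag variety $G_n/P$ of smallest dimension is $\C\p^{n-1}$, obtained by taking $P$ to be the stabilizer of a line (or, equivalently, of a hyperplane). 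Thus $\dim_\C G_n - \dim_\C P_{max} = \dim_\C \C\p^{n-1} = n-1$, so
$$\codim_{\C} \hom(\F_r, G_n)^{red} \geqs (r-1)(n-1),$$
which gives real codimension at least $2(r-1)(n-1)$. For the compact cases $G_n = \Um{n}, \SUm{n}$, the corresponding argument from the proof of Theorem~\ref{gencodim} (using Lemma~\ref{finiteKconjugates} and the identification $G_\C/P \cong K/(P\cap K)$) yields the same real codimension bound.

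For the second statement, the key input is that $\hom(\F_r, G_n)^{red}$ is a real algebraic subset of the smooth manifold $\hom(\F_r, G_n) \cong G_n^r$, and hence can be stratified by locally closed submanifolds (see \cite{BCR}), each of real codimension at least $c := 2(r-1)(n-1)$. Writing $k < c-1$, any continuous map $S^k \to \hom(\F_r, G_n)$ can be approximated by a smooth map transverse to every stratum of $\hom(\F_r,G_n)^{red}$, and since $k < c$ such a transverse map misses the reducible locus entirely; similarly any nullhomotopy $S^k \cross I \to \hom(\F_r, G_n)$ has domain of dimension $k+1 < c$ and can be homotoped (rel boundary) off the reducible locus. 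This is precisely the argument used in the proofs of Lemma~\ref{pi_0} and Lemma~\ref{locallysimplyconnected} in the present paper. It follows that the inclusion $\hom(\F_r, G_n)^{irr} \injects \hom(\F_r, G_n)$ induces an isomorphism on $\pi_k$ for $k < c - 1 = 2(r-1)(n-1) - 1$, and the identification $\pi_k(\hom(\F_r, G_n)) \isom \pi_k(G_n^r) \isom \pi_k(G_n)^r$ is immediate.

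The main obstacle in this plan is essentially bookkeeping: identifying the correct minimal parabolic to sharpen the general bound of Theorem~\ref{gencodim} to the exact number $(r-1)(n-1)$. The transversality step is routine once the codimension estimate is in place, because it is the same technique invoked repeatedly earlier in the paper.
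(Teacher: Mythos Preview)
Your proposal is correct and follows essentially the same approach as the paper's proof. The only difference is presentational: the paper explicitly lists the maximal parabolics $P_k$ (stabilizers of $\C^k\times\{0\}$), computes $\dim_\C P_k = k^2 + n(n-k)$, and minimizes $(r-1)(nk-k^2)$ over $k$, whereas you identify the minimal flag variety $G_n/P_{max}$ as $\C\p^{n-1}$ directly; both routes yield the same bound $(r-1)(n-1)$ and the transversality step is handled identically.
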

\begin{proof} First we consider the case $G_n = \GLm{n}$.  Recall (Lemma~\ref{H_P}) that reducible locus is the union of finitely many irreducible algebraic sets of the form $H_P$, where $P < \GLm{n}$ is a maximal proper parabolic.  Hence it suffices to compute the codimensions of the $H_P$.
Each maximal proper parabolic $P < \GLm{n}$ is conjugate to one of the subgroups
$$P_k = \{A \in G_n \,:\, A(\C^k \cross \{0\}) \subset \C^k \cross \{0\}\},$$
where $k$ ranges from $1$ to $n-1$.
We have $\dim_\C (P_k) = k^2 + n(n-k)$, so the argument in the proof of Theorem~\ref{gencodim} shows that
$$\codim_{\C} (H_{P_k}) \geqs (r-1) (n^2 - k^2 - n(n-k)) = (r-1) (nk - k^2)$$
and this is minimized when $k = 1$ (or, symmetrically, $n-1$).  Hence
$$\codim_{\C} (\hom(\F_r, \GLm{n})^{red}) \geqs (r-1) (n - 1)$$
as claimed.

The arguments for $\SLm{n}$, $\Um{n}$, and $\SUm{n}$ are completely analogous.
\end{proof}

\begin{lem}\label{fiber-null}   For any $\rho\in \hom(\F_r, G_n)^{irr}$, the orbit-inclusion map $PG_n \to \hom(\F_r, G_n)^{irr}$, $[g] \mapsto [g]\rho [g]^{-1}$, induces the zero map on homotopy groups in dimensions less than $2(r-1)(n-1)-1$.
\end{lem}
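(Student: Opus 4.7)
The plan is to combine Lemma \ref{nullhomotopy} with the connectivity estimate of Lemma \ref{linear-codim}. Consider the commutative diagram
\[
\xymatrix{
PG_n \ar[r]^-{\iota_\rho} \ar[dr]_{j_\rho} & \hom(\F_r, G_n)^{irr} \ar@{^{(}->}[d]^{i} \\
& \hom(\F_r, G_n),
}
\]
where $\iota_\rho$ and $j_\rho$ are both given by $[g]\mapsto [g]\rho[g]^{-1}$, and $i$ is the inclusion.

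First I would invoke Lemma \ref{nullhomotopy} (with $G = G_n$) to conclude that $j_\rho \co PG_n \to \hom(\F_r, G_n) \isom G_n^r$ is nullhomotopic, so $(j_\rho)_* = 0$ on all homotopy groups. Second, by Lemma \ref{linear-codim}, transversality gives that $i_* \co \pi_k(\hom(\F_r, G_n)^{irr}) \to \pi_k(\hom(\F_r, G_n))$ is an isomorphism for every $k < 2(r-1)(n-1) - 1$. Combining these two facts via the commuting triangle: for $k$ in this range we have $i_* \circ (\iota_\rho)_* = (j_\rho)_* = 0$, and since $i_*$ is injective, this forces $(\iota_\rho)_* = 0$.

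I do not expect a serious obstacle; the statement is essentially a packaging of results already proved. The only small point of care is confirming that the basepoint choices are harmless, but $\XC{r}(G_n)^{irr}$ and $\hom(\F_r, G_n)^{irr}$ are path connected (Lemma \ref{pi_0}), so the assertion is independent of the choice of $\rho$ and the corresponding basepoint in the orbit.
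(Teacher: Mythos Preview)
Your argument is correct and matches the paper's proof essentially verbatim: the paper also factors the orbit-inclusion through the composite $PG_n \to \hom(\F_r, G_n)^{irr} \to \hom(\F_r, G_n)$, invokes Lemma~\ref{nullhomotopy} for nullhomotopy of the composite, and Lemma~\ref{linear-codim} for the isomorphism on $\pi_k$ in the stated range.
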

\begin{proof} By Lemma~\ref{nullhomotopy}, the composite map
$$PG_n \to \hom(\F_r, G_n)^{irr} \to \hom(\F_r, G_n)$$
is nullhomotopic, and by Lemma \ref{linear-codim}, the second map in this composition is an isomorphism in the stated range.
\end{proof}

\begin{thm}\label{periodicity} Assume $(r-1)(n-1)\geqs 2$ and $1 < k < 2(r-1)(n-1)-1$.  Then
$$\pi_k (\XC{r}(G_n)^{irr}) = \begin{cases} \Z/n\Z, & \textrm{ if $k = 2$}\\
							\Z^r, & \textrm{ if $k$ is odd and $ k<2n$} \\
						   \Z, & \textrm{ if  $k$ is even and $2 < k<2n$}\\
						   (\Z/n!\Z)^r \oplus \Z, & \textrm{ if  $k=2n$} \,\,[\textrm{and }2n < 2(r-1)(n-1) - 1].\\
						   			\end{cases}
$$
Moreover, $\pi_k (\XC{r}(G_n)^{irr})$ is finite for $2n < k < 2(r-1)(n-1)-1$.

When $k$ is even and less than $2n$, the above isomorphisms are induced by the boundary map for the principal $PG_n$--bundle $(\ref{PG-p})$.
\end{thm}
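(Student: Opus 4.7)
The plan is to invoke the long exact sequence in homotopy of the principal $PG_n$-bundle (\ref{PG-p}). By Lemma \ref{fiber-null}, the orbit-inclusion $PG_n \injects \hom(\F_r, G_n)^{irr}$ induces the zero map on $\pi_j$ for every $j<2(r-1)(n-1)-1$. Applying this at both degrees $k$ and $k-1$, in the range $1<k<2(r-1)(n-1)-1$ the long exact sequence collapses into short exact sequences
$$0\to \pi_k(\hom(\F_r, G_n)^{irr})\to \pi_k(\XC{r}(G_n)^{irr}) \xrightarrow{\partial} \pi_{k-1}(PG_n)\to 0,$$
whose surjection is the connecting homomorphism. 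Lemma \ref{linear-codim} identifies the leftmost term with $\pi_k(G_n)^r$, so the final assertion of the theorem --- that the isomorphism in the even-$k$ case is induced by $\partial$ --- is immediate from this sequence whenever $\pi_k(G_n) = 0$.

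Next I would compute $\pi_*(PG_n)$ from the central extension $Z(G_n) \to G_n \to PG_n$. In every one of the four cases, $Z(G_n)$ is either finite (when $G_n$ is $\SLm{n}$ or $\SUm{n}$) or a circle (when $G_n$ is $\GLm{n}$ or $\Um{n}$), and in the latter case the induced map $\pi_1(Z(G_n))\to \pi_1(G_n)$ is multiplication by $n$. The associated long exact sequence therefore yields $\pi_j(PG_n)\isom \pi_j(G_n)$ for $j\geqs 2$ together with $\pi_1(PG_n)\isom \Z/n\Z$. Combining this with the Bott-periodicity calculation of $\pi_*(G_n) = \pi_*(U(n))$ --- namely $\pi_j(G_n)=\Z$ for odd $j\leqs 2n-1$, $\pi_j(G_n)=0$ for even $j$ with $2\leqs j\leqs 2n-2$, $\pi_{2n}(G_n)\isom\Z/n!\Z$, and $\pi_j(G_n)$ finite for all $j\geqs 2n$ (the last assertion follows since $U(n)$ has the rational homotopy type of $S^1\times S^3\times\cdots\times S^{2n-1}$) --- and substituting into the short exact sequence delivers the first three numerical cases of the theorem directly.

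For $k=2n$ the short exact sequence becomes $0\to (\Z/n!\Z)^r\to \pi_{2n}(\XC{r}(G_n)^{irr})\to \Z\to 0$, which splits since $\Z$ is free, producing $(\Z/n!\Z)^r\oplus \Z$. For $k>2n$ within the stated range, both $\pi_k(G_n)^r$ and $\pi_{k-1}(PG_n)\isom \pi_{k-1}(G_n)$ are finite, whence $\pi_k(\XC{r}(G_n)^{irr})$ is finite. The main bookkeeping concern is checking that Lemma \ref{fiber-null} is applicable in both degrees $k$ and $k-1$ --- which is automatic from the hypothesis $k<2(r-1)(n-1)-1$ --- and matching the parity of $k$ against the stable and unstable ranges of $\pi_*(U(n))$; beyond these routine checks there is no substantial obstacle.
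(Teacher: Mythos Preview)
Your proof is correct and follows essentially the same approach as the paper: both use Lemmas~\ref{linear-codim} and~\ref{fiber-null} to break the long exact sequence of the bundle~(\ref{PG-p}) into short exact sequences $0\to\pi_k(G_n)^r\to\pi_k(\XC{r}(G_n)^{irr})\to\pi_{k-1}(PG_n)\to 0$, compute $\pi_*(PG_n)$ from the central extension, and feed in Bott periodicity and the Borel--Hirzebruch value $\pi_{2n}(G_n)=\Z/n!\Z$. Your treatment is in fact slightly more explicit about why the long exact sequence splits (applying Lemma~\ref{fiber-null} at both $k$ and $k-1$) and about how $\pi_1(PG_n)\cong\Z/n\Z$ arises in the non-semisimple cases.
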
 

In particular, if $(r-1)(n-1)\geqs 2$ we have 
\begin{equation}\label{bdry} \xymatrix{ \pi_2 (\XC{r}(G_n)^{irr}) \ar[r]^-{\partial}_-{\isom} & \pi_{1}(PG_n) \isom \Z/n\Z},\end{equation}
 and for $r, n\geqs 3$ we have
$\pi_3 (\XC{r}(G_n)^{irr}) \isom \Z^r$. 
Furthermore, when $n=2$ we have $\pi_4 (\XC{r}(G_2)^{irr}) \isom (\Z/2\Z)^r \oplus \Z$ so long as $r\geqs 4$, and when $n\geqs 3$ we have $\pi_4 (\XC{r}(G_n)^{irr}) \isom \Z$ so long as $r\geqs 3$.

\begin{rem} The periodic groups $\Z$ and $\Z^r$ in Theorem~\ref{periodicity} are precisely the homotopy groups of the gauge group $\mathrm{Map} (\Sigma, G_n)$ of the trivial bundle $\Sigma \cross G_n$ $($with a shift in dimension$)$, where $\Sigma\heq \bigvee_r S^1$ is an open surface with fundamental group $\F_r$.
Analogous periodicity results $($again describing the homotopy of the moduli space of stable bundles in terms of a gauge group$)$ were established in~\cite{DU, BGG}. 
\end{rem}

\vspace{.2in}
\noindent \emph{Proof of Theorem \ref{periodicity}.}
By Lemmas \ref{linear-codim} and \ref{fiber-null}, if $k < 2(r-1)(n-1)-1,$ then the long exact sequence in homotopy associated to the bundle (\ref{PG-p}) breaks up into short exact sequences
\begin{equation}\label{per-seq}1\to  \pi_k (G_n)^r \to \pi_k \left(\XC{r} (G_n)^{irr}\right) \to \pi_{k-1} (PG_n)\to 1.\end{equation}
The short exact sequence of groups $1\to Z(G_n) \to G_n \to PG_n \to 1$ shows that
$\pi_1 (PG_n) \isom \Z/n\Z$, while $\pi_{k} (PG_n) \isom \pi_{k} (G_n)$ for $k\geqs 2$.
By Bott periodicity and stability of the homotopy groups of $G_n$, we have 
$$\pi_k (G_n) = \begin{cases} \Z, & \textrm{ if $k$ is odd and $1 < k<2n$} \\
						   0, & \textrm{ if  $k$ is even and $k<2n$,}\\
						   			\end{cases}$$
and by Borel--Hirzebruch \cite{BH} we have $\pi_{2n} (G_n) =    (\Z/n!\Z)$ $(n\geqs 2$).  
The calculation of $\pi_k (\XC{r} (G_n)^{irr})$ now follows easily from (\ref{per-seq}).

By a standard computation in rational homotopy theory (see \cite{Griffiths-Morgan}), $\pi_k (G_n) \otimes \mathbb{Q}$ vanishes for  $k > 2n-1$, so $\pi_k (G_n)$ is finite for $k > 2n-1$, as is $\pi_{k} (PG_n)$.  
 (Note that $\pi_k (G_n)$ is a finitely generated Abelian group, since it is isomorphic to $\pi_k (\Um{n})$ and $\Um{n}$ is a finite CW complex.)
Using the exact sequence (\ref{per-seq}), we see that 
$\pi_k (\XC{r} (G_n)^{irr})$ is finite in the stated range.
$\hfill \Box$

\begin{rem}
Work of Kervaire, Toda, and Matsunaga \cite{Kervaire, Toda, Matsunaga} gives calculations of $\pi_{2n+i} (G_n)$ for $i =1, \ldots, 5$, and Matsunaga \cite{Matsunaga-odd} has also provided a great deal of information on the odd primary parts of  $\pi_{2n+i} (G_n)$ for $i>5$.  In many of these cases, however, it is unclear whether the exact sequence $(\ref{per-seq})$ splits.  
\end{rem}

\begin{rem} It is natural to try to generalize the ideas in this section to any any connected reductive $\C$--group by replacing the irreducible representations by the good representations, since the good locus will provide a fibration $($but the irreducible locus in general will not$)$. Unfortunately, we do not know how to count the codimension of the complement of the good locus in general. However, we expect that $\pi_{2}(\X_{r}(G)^{good})\cong \pi_1(PG)$ for any connected reductive $\C$--group $G$ and for sufficiently large $r$. \end{rem}

\subsection{The second homotopy group of the full moduli space}

In his thesis \cite{Ba0}, Baird computed the Poincar\'{e} polynomial of $\XC{r} (\SUm{2})$.  This result is recalled in Section \ref{PP}, where we give an explicit version of his formula (Lemma \ref{Baird}).  One sees immediately from the formula that the Betti numbers satisfy
$$b_k (\XC{r} (\SUm{2})) = 0 \textrm{ for $1<k<6,$\, and \,} b_6 (\XC{r} (\SUm{2})) = \binom{r}{3}.$$  
Since $\XC{r} (\SUm{2})$ is simply connected (Theorem \ref{lem22BL}), Serre's Mod--$C$ Hurewicz Theorem implies that $\mathrm{Rank} (\pi_k (\XC{r} (\SUm{2}))) = b_k(\XC{r} (\SUm{2}))$ for $k \leqs 6$.  
Thus $\pi_6 (\XC{r} (\SUm{2}))$ is the first non-trivial homotopy group, and  in fact we expect that for all the Lie groups $G$ considered in this article, $\pi_k (\XC{r} (G)) = 0$ for $2 \leqs k \leqs 5$.

In this section, we will provide \e{integral} information about the second homotopy group (and consequently the second homology group) of $\XC{r} (\SUm{n})$.
Note that by Theorem~\ref{higherhomotopy}, replacing $\SUm{n}$ by $\Um{n}$, $\GLm{n}$, or $\SLm{n}$ does not change 
$\pi_2$.
 To obtain information about this group, we will use a general-position type argument in the context of simplicial complexes.  We begin by explaining the necessary ingredients.

Consider a simplicial complex $X$ together with a subcomplex $Y\subseteq X$ such that $X\sm Y$ is dense.  Under certain local conditions around points in $Y$, it is possible to homotope every map $S\to X$ off of $Y$, where $S$ is a compact manifold of dimension at most 2.   This result is inspired by Lemma 2.5 from Gomez--Pettet--Souto~\cite{GPS}, which deals with maps $S^1\to X$.
\begin{prop}\label{trans-prop}

Let $X$ be a simplicial complex and let $Y\subseteq X$ be a subcomplex.  Assume the following conditions hold:
\begin{enumerate}
\item[$(1)$] The space $X \sm Y$  is dense in $X$.
\item[$(2)$]  For each point $y\in Y$, and each open neighborhood $U \subseteq X$ containing $y$, there exists an open neighborhood $V\subseteq U$ $($with $y\in V)$ such that $V\setminus Y$ is path connected.
\item[$(3)$] For each point $y\in Y$, there exists a path connected open neighborhood $V$ of $y$ such that
 $\pi_2 (V) = 0$ and the natural map $\pi_1 (V\setminus Y, v) \to \pi_1 (V,v)$ is injective $($for each $v\in V\sm Y)$. 
\end{enumerate}
Let $S$ be a compact manifold of dimension at most $2$ with $($possibly empty$)$ boundary and let $S' \subset S$ be a closed subset containing $\partial S$ such that $S$ admits a triangulation with $S'$ as a subcomplex.
Then for every continuous map $f\co (S, S') \to (X, X\sm Y)$ such that $f$ is simplicial on $S'$, there exists a map $h\co S \to X$ such that $h$ is homotopic to $f$ $($rel $S'$$)$ and $h (S) \cap Y = \emptyset$.
\end{prop}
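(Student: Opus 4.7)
The plan is to prove this by a skeletal push-off on a sufficiently refined triangulation of $S$. Since $Y$ is closed in $X$ (being a subcomplex) and $f(S')$ is a compact subset of the open set $X\sm Y$, one first enlarges $S'$ to a closed subcomplex $S''\subseteq S$ with $S'\subseteq \mathrm{int}(S'')$ and $f(S'')\subseteq X\sm Y$, so that every subsequent homotopy may be arranged to be supported in $S\sm \mathrm{int}(S'')$ and hence to be rel $S'$ automatically. Using compactness of $S$, continuity of $f$, and the local nature of (2) and (3), one then iteratively barycentrically subdivides the triangulation of $S$ (keeping $S''$ a subcomplex) until, for every simplex $\sigma$ meeting $S\sm\mathrm{int}(S'')$, the closed star $\overline{\mathrm{St}(\sigma)}$ is mapped into an open set $V_\sigma\subseteq X$ satisfying both (2) and (3).

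I would then push $f$ off $Y$ in three stages, using one local condition at each stage. First, for each vertex $v\notin S''$ with $f(v)\in Y$, path-connectedness of $V_v$ (from (3)) and density of $X\sm Y$ in $V_v$ (from (1)) give a path $\alpha_v$ in $V_v$ from $f(v)$ to a point of $V_v\sm Y$; a coning homotopy supported on the open star of $v$ and valued in $V_v$ drags $f(v)$ along $\alpha_v$ while fixing the link. Second, for each edge $e$ outside $S''$, I would refine $e$ still further so that each sub-edge maps into a neighborhood $V'$ given by (2) with $V'\sm Y$ path connected, push any new interior vertices off $Y$ as before, and then replace each sub-edge by a path in $V'\sm Y$ joining its endpoints. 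Third, for each $2$-simplex $\sigma$ outside $S''$, after the first two stages one has $f(\partial\sigma)\subset X\sm Y$ and $f(\sigma)\subseteq V_\sigma$; the long exact sequence of the pair $(V_\sigma, V_\sigma\sm Y)$, together with $\pi_2(V_\sigma)=0$ and injectivity of $\pi_1(V_\sigma\sm Y)\to \pi_1(V_\sigma)$ from (3), gives $\pi_2(V_\sigma,V_\sigma\sm Y)=0$, so $f|_\sigma$ is homotopic rel $\partial\sigma$ to a map $\sigma\to V_\sigma\sm Y$.

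The homotopies produced in the three stages are supported on disjoint open stars or on individual $2$-simplices, agree on shared faces by construction, and do not move any point of $S'$; gluing them yields a homotopy from $f$ to the desired map $h$ with $h(S)\cap Y=\emptyset$.

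I expect the main obstacle to lie in the edge stage. The injectivity clause in (3) is not equivalent to surjectivity of $\pi_1(V\sm Y)\to \pi_1(V)$, so it is not formally automatic that two paths in $V'$ with common endpoints off $Y$ are homotopic rel endpoints. The point is that in a simplicial complex one may take the local neighborhoods to be open stars of vertices in iterated barycentric subdivisions, which are contractible, so that inside such a model neighborhood any two paths with the same endpoints are homotopic rel endpoints. Arranging all the subdivisions, vertex push-offs, and path replacements so that they are compatible with the rel-$S'$ condition and with each other is the technical core of the argument. The hypothesis $\dim S\leq 2$ enters because (2) and (3) only furnish vanishing of $\pi_k(V,V\sm Y)$ through $k=2$; for a manifold of higher dimension the argument would also require $\pi_3(V,V\sm Y)=0$.
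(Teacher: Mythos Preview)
Your overall skeletal push-off strategy is sound, and your use of $\pi_2(V_\sigma,V_\sigma\sm Y)=0$ for the $2$--simplex stage is exactly how the paper ultimately uses condition~(3). But the paper organizes the proof quite differently, and the difference matters precisely at the edge stage you flag as problematic.

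The paper works \emph{top--down} rather than bottom--up, and makes the map simplicial at the outset. After simplicial approximation, Step~1 uses only density (condition~(1)) to move each $2$--simplex so that it does not map entirely into $Y$; this is done by pushing the barycenter of each offending $2$--simplex to the barycenter of a larger simplex not in $Y$. Because the map is simplicial and $Y$ is full in the barycentric subdivision, after Step~1 the preimage of $Y$ is a union of edges and vertices. Step~2 then treats each ``bad'' edge $e$ (one with $f_1(e)\subset Y$) via a separate combinatorial lemma: using condition~(2) inside the open star $\st(f_1(e))$, the paper shows that the two $2$--simplices on either side of $e$ can be connected by a chain of simplices $\Delta_1,\dots,\Delta_m$ in $X$, each containing $f_1(e)$, with consecutive intersections not in $Y$. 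This chain gives a map of a model disk $D^j_m$ into $X$ through which $f_1$ factors on $L(e)=\sigma(e)\cup\tau(e)$, and a straight-line homotopy inside that contractible model pushes $f_1$ off $Y$ on the interior of $e$. After Step~2, $g^{-1}(Y)$ is a finite set of vertices. Only then does the paper invoke condition~(3), and it applies it to a small disk $D_w$ around each such vertex: the boundary circle already lies in $X\sm Y$, so $\pi_1$--injectivity gives a filling in $V_w\sm Y$, and $\pi_2(V_w)=0$ makes the replacement homotopic rel $\partial D_w$.

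Your edge stage, by contrast, wants to replace each sub-edge by a path in $V'\sm Y$ (from condition~(2)) and then argue the old and new paths are homotopic rel endpoints. As you note, condition~(2) gives no control over $\pi_1(V')$, and your suggested fix via contractible open stars $W$ is incomplete: condition~(2) does not say that $W\sm Y$ is path connected, so you must nest $V_\sigma\supset W\supset V'$ with $W$ a contractible open star in $X$ and $V'$ from condition~(2) applied with $U=W$. This can be arranged, but then the new subdivision vertices on $e$ must be pushed off $Y$ compatibly with \emph{both} adjacent $V'$'s, and you must also ensure every edge homotopy stays inside $\bigcap_{\sigma\supset e}V_\sigma$ so that the $2$--simplex stage still sees its image inside a single $V_\sigma$. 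None of this is impossible, but it is genuinely more bookkeeping than your sketch indicates.

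In short: the paper's approach trades your nested-neighborhood bookkeeping for simpliciality plus the ``reachability'' lemma, which is the technical heart of its edge step and has no analogue in your sketch. Your bottom-up route can be completed, but the edge stage needs the nesting $V_\sigma\supset W\supset V'$ made explicit and the compatibility of vertex pushes with adjacent $V'$'s carefully checked; the paper's top-down route sidesteps this entirely.
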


The proof of Proposition \ref{trans-prop} will be given in the Appendix.

\begin{thm}\label{trans}
Let $Y\subset X$ be as in Proposition \ref{trans-prop}.  
Then the inclusion $X\sm Y\injects X$ is $2$--connected.
\end{thm}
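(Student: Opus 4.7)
The plan is to use Proposition~\ref{trans-prop} to prove that $\pi_k(X, X\setminus Y) = 0$ for $k = 1, 2$ and that $\pi_0(X\setminus Y)\to\pi_0(X)$ is surjective; the 2-connectedness of the inclusion $X\setminus Y\hookrightarrow X$ then follows from the long exact sequence of the pair $(X, X\setminus Y)$ (with surjectivity on $\pi_0$ extracted separately since $\pi_0(X,X\setminus Y)$ does not appear in that sequence).

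For the $\pi_0$ statement: given $x \in X$, apply Proposition~\ref{trans-prop} to the constant map $f\co \{\mathrm{pt}\}\to X$ at $x$, with $S = \{\mathrm{pt}\}$ and $S' = \emptyset$ (so $f$ is vacuously simplicial on $S'$). The resulting map $h$ satisfies $h(\mathrm{pt}) \in X\setminus Y$, and the homotopy $f \simeq h$ is a path from $x$ to a point of $X\setminus Y$.

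For $\pi_k(X, X\setminus Y) = 0$ with $k \in \{1, 2\}$: represent a class by a map of pairs $f\co (D^k, \partial D^k) \to (X, X\setminus Y)$. Since $Y$ is a subcomplex of $X$ it is closed, so $X\setminus Y$ is open. I would first homotope $f$ through maps of pairs so that $f|_{\partial D^k}$ becomes simplicial with respect to some triangulation of $D^k$ in which $\partial D^k$ is a subcomplex. This is accomplished by applying simplicial approximation to $f|_{\partial D^k}$ within $X\setminus Y$ and then extending the resulting homotopy to all of $D^k$ via the homotopy extension property of the cofibration $\partial D^k \hookrightarrow D^k$. Next, apply Proposition~\ref{trans-prop} with $S = D^k$ and $S' = \partial D^k$ to obtain a homotopy rel $\partial D^k$ from the modified map to a map $h\co D^k \to X$ with $h(D^k)\cap Y = \emptyset$. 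Concatenating the two homotopies gives a homotopy of pairs from $f$ to a map into $X\setminus Y$, as required.

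The one subtle point is verifying that the preliminary simplicial approximation on $\partial D^k$ can be carried out through maps into $X\setminus Y$. The clean way to do this is to pass first to the barycentric subdivision $X'$ of $X$: there, $Y$ corresponds to a subcomplex $Y'$, and the open star of any vertex of $X'$ not in $Y'$ is disjoint from $Y'$. Consequently the standard carrier homotopy implementing simplicial approximation of $f|_{\partial D^k}$ stays inside open stars of vertices outside $Y'$, and therefore inside $X\setminus Y$. All remaining verifications are routine, and I expect no further obstacle beyond this simplicial-approximation bookkeeping.
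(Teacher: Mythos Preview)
Your proposal is correct and follows essentially the same approach as the paper: both arguments deduce the result directly from Proposition~\ref{trans-prop} by choosing suitable pairs $(S,S')$, and both identify the one technical point as arranging that the map is simplicial on $S'$ via simplicial approximation carried out inside $X\setminus Y$. The only difference is packaging: you use relative homotopy groups and the pairs $(D^k,\partial D^k)$, while the paper argues surjectivity with $(S^i,*)$ and injectivity with $(S^i\times[0,1],\,S^i\times\{0,1\})$; the paper handles the simplicial-approximation step by subdividing $X$ until the image of the boundary lies in a subcomplex $X''\subset X\setminus Y$, which is the same idea you sketch with open stars in the barycentric subdivision.
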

\begin{proof}  
The fact that the maps are surjective on $\pi_i$ for $i\leqs  2$ follows from Proposition~\ref{trans-prop} by taking $(S, S')$ to be the based sphere $(S^i, *)$.  Note that given a map
$f\co (S^i, *) \to (X, X\sm Y)$, we can always subdivide $X$ to make $f(*)$ a vertex (so that $f$ is simplicial on $S' = *$).

To see that these maps are injective on $\pi_i$ for $i\leqs  1$, we will apply the Proposition in the case $(S, S') = (S^i \cross [0,1], S^i \cross \{0, 1\})$.  Assume $i=1$; the case $i=0$ is simpler.  Given $f, g\co S^1 \to X\sm Y$ and a homotopy between them in $X$, we need to show that there exists a homotopy between them in $X\sm Y$.  Since $f(S^1)$ and $g(S^1)$ are compact, they are contained inside some finite subcomplex $X' \subset X$.  These compact sets are disjoint from the compact set $Y\cap X'$, so they lie 
at some positive distance from $Y\cap X'$ (in the simplicial metric on $X'$) and hence after barycentrically subdividing $X$ enough times (so that the diameter of each simplex is sufficiently small), every simplex in $X'$ meeting $f(S^1)$ or $g(S^1)$  is disjoint from $Y$.  Viewing $f$ and $g$ as maps from $S^1$ to the subcomplex $X'' \subset X\sm Y$ consisting of all maximal simplices in $X'$ that meet $f(S^1)$ or $g(S^1)$, the Simplicial Approximation Theorem allows us to choose a triangulation of $S^1$ and maps $f', g'\co S^1 \to X''$, simplicial with respect to this triangulation, 
such that there exist homotopies $f\heq f'$ and $g\heq g'$ inside $X''$ (hence inside $X\sm Y$).  Now, since $f$ and $g$ are homotopic in $X$, so are $f'$ and $g'$.   Let $H$ be a homotopy between $f'$ and $g'$.  We can now choose a triangulation of $S^1 \cross [0,1]$ that restricts, on $S^1\cross \{0\}$ and on  $S^1\cross \{1\}$, to the triangulation of $S^1$ for which $f'$ and $g'$ are simplicial.  Applying Proposition~\ref{trans} to $H\co (S^1 \cross [0,1], S^1 \cross \{0, 1\})\to (X, X\sm Y)$, we conclude that $f'$ and $g'$ are homotopic inside $X\sm Y$, and it follows that the same is true of $f$ and $g$.
\end{proof}

We note that when $S$ is $1$--dimensional, only the first two conditions of Proposition~\ref{trans-prop} will be used in its proof, so Proposition~\ref{trans-prop} extends~\cite[Lemma 2.5]{GPS}.  Moreover, when $S$ is 
$0$--dimensional only the first condition is used. This motivates the following question.

\begin{que}
Can Proposition~\ref{trans} be extended to maps $M \to X$, with $M$ a closed, triangulable, manifold of dimension $n> 2$, after adding the condition that each point in $Y$ admits a neighborhood $V$ such that $\pi_i (V) = 0$ for $i=1,\ldots, n$ and $\pi_{i} (V\sm Y) \injects \pi_{i} (V)$ for $i < n?$
\end{que}

We now apply the general result  above to the moduli spaces considered in this paper.

\begin{prop}\label{2-connect} Assume that $(r-1)(n-1)\geqs 2$. Then the inclusion $\XC{r} (G_n)^{irr} = \XC{r} (G_n)^{sm} \injects \XC{r} (G_n)$ is a $2$--connected map.
\end{prop}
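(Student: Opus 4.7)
I would apply Theorem~\ref{trans} to the pair $(X,Y)=(\XC{r}(\Um{n}),\XC{r}(\Um{n})^{red})$, after first reducing the other three cases to $\Um{n}$.  For $G_n=\GLm{n}$, Theorem~\ref{deformretract} gives a strong deformation retract onto $\XC{r}(\Um{n})$; the Kempf--Ness retraction used in~\cite{FlLa} preserves irreducibility, so it restricts to a deformation retract of pairs, and $2$-connectedness of the inclusion transports from $\Um{n}$ to $\GLm{n}$.  The same reasoning handles $\SLm{n}\heq \SUm{n}$.  To pass from $\Um{n}$ to $\SUm{n}$, I would use Corollary~\ref{P-cor}, which exhibits $\XC{r}(\Um{n})$ as the quotient of $\XC{r}(\SUm{n})\times T^r$ by a free $F^r=(\Z/n\Z)^r$-action, and similarly for the smooth loci.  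The resulting commutative square of covering maps, combined with the vanishing of $\pi_1(\XC{r}(\SUm{n}))$ (Theorem~\ref{simplyconnected}) and of $\pi_1(\XC{r}(\SUm{n})^{irr})$ (Theorem~\ref{fund-thm}, since $\pi_1(\SUm{n})=1$), transports $2$-connectedness from $\Um{n}$ down to $\SUm{n}$ via the long exact homotopy sequences of the two coverings.

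For the base case $G_n=\Um{n}$, the space $\XC{r}(\Um{n})$ is a compact semi-algebraic set and $\XC{r}(\Um{n})^{red}$ is a semi-algebraic subset, so {\L}ojasiewicz's triangulation theorem furnishes a triangulation of $X$ with $Y$ as a subcomplex.  I would then verify the three local conditions of Proposition~\ref{trans-prop}.  Density of $X\sm Y$ follows from the positive codimension of $\hom(\F_r,\Um{n})^{red}$ (Lemma~\ref{linear-codim}) together with openness of the orbit map.  For conditions $(2)$ and $(3)$ I would apply Lemma~\ref{locallysimplyconnected} at each reducible $[\rho]\in Y$:  this is legitimate because every subgroup of $\Um{n}$ has connected centralizer (as recalled just before the lemma), so $Stab(\rho)/Z(\Um{n})$ is path connected; the codimension hypothesis (either $r\geqs 3$, or $r\geqs 2$ with $\mathrm{Rank}(D\Um{n})=n-1\geqs 2$) follows from $(r-1)(n-1)\geqs 2$.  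The lemma produces contractible neighborhoods $V$ with $V^{irr}=V\sm Y$ simply connected, yielding $(2)$ and $(3)$ simultaneously (contractibility gives $\pi_2(V)=0$, and $\pi_1(V\sm Y)\to \pi_1(V)$ is trivially injective since both groups vanish).  Theorem~\ref{trans} then gives $2$-connectedness of $\XC{r}(\Um{n})^{sm}\injects \XC{r}(\Um{n})$.

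The main obstacle is the choice of $\Um{n}$ rather than $\SUm{n}$ as the base case.  For $\SUm{n}$, Lemma~\ref{locallysimplyconnected} cannot be applied pointwise to every reducible $[\rho]$:  for instance, the image of $\SU(2)\hookrightarrow \SU(4)$ via $\mathrm{std}\oplus \mathrm{std}$ has a two-component centralizer, realized as the kernel of the map $B\mapsto (\det B)^2\co \Um{2}\to \Um{1}$.  This is precisely what forces the covering detour $\Um{n}\to\SUm{n}$ described above.
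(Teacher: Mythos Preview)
Your argument is correct.  The base case $G_n=\Um{n}$ is handled exactly as in the paper: triangulate the semi-algebraic pair, invoke Lemma~\ref{locallysimplyconnected} (using connectedness of centralizers in $\Um{n}$) to verify the local conditions of Proposition~\ref{trans-prop}, and apply Theorem~\ref{trans}.  Your observation that the hypothesis $(r-1)(n-1)\geqs 2$ splits into the two cases required by Lemma~\ref{locallysimplyconnected}, and your $\SU(2)\hookrightarrow\SU(4)$ example explaining why $\SUm{n}$ cannot serve as the base case, are both on point.

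The reductions to the other three groups differ from the paper.  For $\GLm{n}$ (and $\SLm{n}$) you invoke a deformation retract of \emph{pairs}, asserting that the retraction of~\cite{FlLa} preserves irreducibility.  This is true---the Kempf--Ness flow moves each polystable point within its closed $G$--orbit, and irreducibility is an orbit invariant---but it is not stated in Theorem~\ref{deformretract} as quoted here, so a one-line justification along these lines would be appropriate.  The paper instead avoids this issue entirely by an elementary diagram chase: commutativity forces the map $\pi_1(\XC{r}(\GLm{n})^{irr})\to\pi_1(\XC{r}(\GLm{n}))$ to be surjective, and since both sides are free abelian of the same rank (Theorems~\ref{simplyconnected} and~\ref{fund-thm}) it must be an isomorphism; surjectivity on $\pi_2$ is similar.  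Your route is more conceptual, the paper's more self-contained.  For the passage from $\Um{n}$ to $\SUm{n}$, your covering-space argument via $\XC{r}(\SUm{n})\times T^r\to\XC{r}(\Um{n})$ is precisely the content of Theorem~\ref{higherhomotopy} unpacked, so here the two approaches coincide in substance.
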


Recall that in these cases, the good locus in $\XC{r} (G_n)$ coincides with the irreducible locus.

\begin{proof} 
By assumption $n\geqs 2$; however, we note that when $n=1$, $G_1$ is Abelian and $PG_1=1$, so $\XC{r} (G_1)^{irr} = \XC{r} (G_1)$ and there is nothing to prove.

For $G_n = \Um{n}$, the stabilizer of every representation is connected (see \cite[Lemma 4.3]{Ramras2}).  Therefore, Lemma \ref{locallysimplyconnected} implies that   conditions (2) and (3) in Proposition \ref{trans-prop} are satisfied.  
Condition (1) is satisfied as well, as explained in more generality at the end of the proof of Theorem~\ref{disconnected}.  Therefore, Theorem \ref{trans} gives the result.

Next consider the case $G_n = \GLm{n}$.  In the diagram
$$\xymatrix { \pi_1(\XC{r} (\Um{n})^{irr}) \ar[r]^\isom \ar[d]&  \pi_1 (\XC{r} (\Um{n}))\ar[d]\\
		\pi_1 (\XC{r} (\GLm{n})^{irr}) \ar[r] & \pi_1 (\XC{r} (\GLm{n})),}$$
the right-hand vertical map is an isomorphism because $\XC{r} (\GLm{n})$ deformation retracts to $\XC{r} (\Um{n})$ (Theorem \ref{deformretract}).  Hence the map $\pi_1 (\XC{r} (\GLm{n})^{irr}) \to \pi_1 (\XC{r} (\GLm{n}))$ is a surjection, but we know that these groups are free Abelian of the same rank by Theorems \ref{simplyconnected} and \ref{fund-thm}.  Hence the map must be an isomorphism.  Surjectivity of the map $\pi_2 (\XC{r} (\GLm{n})^{irr}) \to \pi_2 (\XC{r} (\GLm{n}))$ is proven similarly.

Finally, suppose $G_n = \SUm{n}$.  By Theorems \ref{simplyconnected} and \ref{fund-thm} we know that $\XC{r} (\SUm{n})$ and $\XC{r} (\SUm{n})^{irr}$ are simply connected.  The desired statement for $\pi_2$ follows from the diagram
$$\xymatrix { \pi_2(\XC{r} ( \SUm{n})^{irr}) \ar[r] \ar[d]&  \pi_2 (\XC{r} (\SUm{n})) \ar[d]\\
		\pi_2 (\XC{r} (\Um{n}))^{irr} \ar@{->>}[r] & \pi_2 (\XC{r} (\Um{n})),}$$
in which the vertical maps are isomorphisms by Theorem \ref{higherhomotopy}.
Finally, the case of $\SLm{n}$ can be deduced from the $\GLm{n}$ case analogously.
\end{proof}

\begin{thm}\label{pi2} Let $G_n$ be one of $\GLm{n},\SLm{n},\SUm{n}$
or $\Um{n}$. Then we have $\pi_{2}(\XC{r}(G_n))= 0$.
\end{thm}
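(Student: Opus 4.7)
The plan is to combine Proposition~\ref{2-connect}, which makes the inclusion $\XC{r}(G_n)^{irr}\hookrightarrow \XC{r}(G_n)$ a $2$-connected map, with Theorem~\ref{periodicity}, which identifies $\pi_2(\XC{r}(G_n)^{irr})\isom\Z/n\Z$ via the boundary of the principal $PG_n$-bundle $(\ref{PG-p})$. The degenerate cases $(r-1)(n-1)<2$ follow directly from Remark~\ref{othercases}, so I would henceforth assume $(r-1)(n-1)\geqs 2$. The surjection $\pi_2(\XC{r}(G_n)^{irr})\to\pi_2(\XC{r}(G_n))$ given by Proposition~\ref{2-connect} then reduces the theorem to exhibiting, for one explicit generator of the source, a null-homotopy in the target.

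To do so I would choose a central element $z\in Z(G_n)$ of order $n$ and a path $\lambda\co[0,1]\to G_n$ from $1$ to $z$; centrality of $z$ makes $s\mapsto[\lambda(s)]$ a loop $\alpha$ generating $\pi_1(PG_n)\isom\Z/n\Z$. Fixing an irreducible $\rho_0$, Lemma~\ref{nullhomotopy} combined with Lemma~\ref{linear-codim} and transversality produces a disk $D^{irr}\co D^2\to\hom(\F_r,G_n)^{irr}$ bounded by the orbit loop $s\mapsto\lambda(s)\rho_0\lambda(s)^{-1}$; projecting to $\XC{r}(G_n)^{irr}$ collapses this boundary to $[\rho_0]$, giving a sphere $\tilde\sigma_0\co S^2\to\XC{r}(G_n)^{irr}$ whose bundle boundary is $[\alpha]$, hence a generator of $\pi_2(\XC{r}(G_n)^{irr})$. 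The key idea is then to exhibit an alternative bounding disk, living in the full $\hom(\F_r,G_n)$ and passing through reducibles. Concretely, pick a path $\{\rho_u\}_{u\in[0,1]}$ in $\hom(\F_r,G_n)$ from $\rho_0$ to the trivial representation $\mathbf{1}$ and set
\[
F(s,t,u) \;=\; \lambda(s)\,\rho_{u+(1-u)t}\,\lambda(s)^{-1}.
\]
For each $u$, identifying $s=0$ with $s=1$ (valid since $z$ is central) and collapsing the edge $\{t=1\}$ (where $F\equiv\mathbf{1}$) to a point turn the square into a disk $D_u\co D^2\to\hom(\F_r,G_n)$ bounded by $s\mapsto\lambda(s)\rho_u\lambda(s)^{-1}$, which projects to a sphere $\sigma_u\co S^2\to\XC{r}(G_n)$; at $u=1$, $F\equiv\mathbf{1}$ identically, so $\sigma_1$ is constant, making $\{\sigma_u\}$ a free null-homotopy of $\sigma_0$ in $\XC{r}(G_n)$.

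To finish I would identify $[\sigma_0]$ with $[\tilde\sigma_0]$ in $\pi_2(\XC{r}(G_n))$ by gluing $D_0$ and $D^{irr}$ along their common boundary loop to form a map $S^2\to\hom(\F_r,G_n)\isom G_n^r$; since $\pi_2$ of any Lie group vanishes, this glued sphere is null-homotopic in $\hom(\F_r,G_n)$, so its image in $\XC{r}(G_n)$---which, after collapsing the equator, represents $[\sigma_0]-[\tilde\sigma_0]$---is null-homotopic. Combined with the null-homotopy of $\sigma_0$ above, the generator $[\tilde\sigma_0]$ therefore vanishes in $\pi_2(\XC{r}(G_n))$, and Proposition~\ref{2-connect} forces $\pi_2(\XC{r}(G_n))=0$. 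The hardest part will be the bookkeeping: verifying that $\tilde\sigma_0$ really is a generator (equivalently, that the bundle boundary $\partial[\tilde\sigma_0]$ does equal $[\alpha]$ in $\pi_1(PG_n)$), and checking that the moving basepoints $[\rho_u]$ do not spoil the continuity of the parametrized construction through the GIT quotient---both of which should reduce to standard continuity and path-lifting for $\hom(\F_r,G_n)\to\XC{r}(G_n)$.
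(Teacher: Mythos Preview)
Your argument is correct and takes a genuinely different route from the paper's.  The paper first reduces to $\SUm{n}$ via Theorem~\ref{higherhomotopy}, then proves that $\pi_2(\X_r(\SUm{n}))$ is independent of $r$ by exploiting the split projections $\F_r\to\F_{r'}$ together with Proposition~\ref{2-connect} and the isomorphism~(\ref{bdry}); finally it kills $\pi_2$ by appealing to specific low-dimensional identifications (for $n=2$, the fact that $\X_3(\SUm{2})\heq S^6$; for $n\geqs 3$, a clever factorisation of a split self-map of $\X_2(\SUm{n})$ through the contractible space $\X_1(\SUm{n})$).

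By contrast, you produce an \emph{explicit} null-homotopy of the generator: you realise a generator of $\pi_2(\X_r(G_n)^{irr})$ as the projection of a disk $D^{irr}$ in $\hom(\F_r,G_n)^{irr}$ bounding the orbit loop, and then observe that the same orbit loop bounds a second disk $D_0$ in the full $\hom(\F_r,G_n)$ obtained by sliding $\rho_0$ to the trivial representation.  Since $\pi_2(G_n^r)=0$, these two disks are homotopic rel boundary, and the second one projects to a visibly null-homotopic sphere in $\X_r(G_n)$.  Your approach is more direct and self-contained---it does not require the external inputs $\X_3(\SUm{2})\heq S^6$ or the factorisation through $\X_1$---and it makes transparent \emph{why} the cyclic class dies: precisely because the generating loop in $PG_n$ lifts to a path in $G_n$, and paths in $G_n$ can be contracted through conjugates of reducibles.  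The paper's approach, on the other hand, has the virtue of being more modular and of illustrating a general ``stabilise and check the base case'' technique.  The bookkeeping concerns you flag (basepoints, continuity through the GIT quotient) are indeed routine: the quotient map $\hom(\F_r,G_n)\to\X_r(G_n)$ is continuous, and free null-homotopies of spheres yield based ones since $\pi_1$ acts on $\pi_2$ by automorphisms.
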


\begin{proof} By Theorem \ref{higherhomotopy}, it suffices to prove
the case of $G_n = \SUm{n}$. When $(r-1)(n-1) < 2$ the result is trivial since $\X_r(\SUm{n})$ is contractible (more generally see Remark \ref{othercases}).  So we assume $(r-1)(n-1)\geqs 2$.

For $r'< r$, the projection $q\co \F_r \to \F_{r'}$ defined by $e_i \mapsto e_i$ for $i\leqs r'$, $e_i \mapsto 1$ for $i> r'$ induces
a map $q^\#\co \X_{r'}(\SUm{n})\to\X_{r}(\SUm{n})$, and since smoothness coincides with irreducibility in the situations under consideration, and applying $q^\#$ does not change the image of a representation,  
we see that $q^\#$ restricts to a map $\XC{r'} (\SUm{n})^{sm} \to \XC{r} (\SUm{n})^{sm}$.  We now have a commutative diagram
\begin{equation} \label{r-r'}
\xymatrix{\pi_{2}(\X_{r'}(\SUm{n})^{sm})\ar[r]\ar@{->>}[d] & \pi_{2}(\X_{r}(\SUm{n})^{sm})\ar@{->>}[d]\\
\pi_{2}(\X_{r'}(\SUm{n}))\ar[r]^-{q^\#_*} & \pi_{2}(\X_{r}(\SUm{n})),
}
\end{equation}
in which the vertical maps are surjective by Proposition~\ref{2-connect}, and the top map is an isomorphism between finite cyclic groups by naturality of \eqref{bdry}.
This implies that $q^\#_*$ is surjective, and $q^\#_*$ is also injective since $q$ is split by the map $F_{r'} \to F_r$ defined by $e_i \mapsto e_i$.  Hence $q^\#_*$ is an isomorphism, and we conclude that the orders of the finite cyclic groups 
$\pi_{2}(\X_{r}(\SUm{n}))$ are  independent of $r$ (so long as $(r-1)(n-1)\geqs 2$).  

When $n=2$,  $\X_3(\SUm{2})$ is homotopy equivalent to a 6--sphere (see \cite{FlLa}), so $\pi_2 (\X_3 (\SUm{2})) = 0$ and hence 
$\pi_2 (\X_r (\SUm{2})) = 0$ for all $r\geqs 3$.

Now assume $n\geqs 3$.  Since $\pi_{2}(\X_{r}(\SUm{n}))$ and $\pi_{2}(\X_{r'}(\SUm{n}))$ are finite groups of the same order, every homomorphism between them that is split (admits a left or right inverse) is an isomorphism.  
Consider the $\SUm{n}$--equivariant maps
$$\hom(\F_2, \SUm{n}) = \SUm{n}^2 \stackrel{\mu}{\maps} \hom(\F_3, \SUm{n}) = \SUm{n}^3 \stackrel{\eta}{\maps} \hom(\F_2, \SUm{n}) = \SUm{n}^2$$
defined by
$$\mu (A, B) = (A, B, AB), \,\,\,\, \eta (A, B, C) = (AB, C).$$
These maps are split (respectively) by the $\SUm{n}$--equivariant maps 
$$(A, B, C) \mapsto (A,B) \,\, \textrm{ and } (A, B) \mapsto (A, I, B),$$
 so the induced maps $\overline{\mu} \co \XC{2} (\SUm{n}) \to  \XC{3} (\SUm{n})$ and $\overline{\eta} \co \XC{3} (\SUm{n}) \to  \XC{2} (\SUm{n})$ are also split, and hence $ \overline{\eta} \circ \overline{\mu}$ induces an isomorphism on $\pi_2 ( \XC{2} (\SUm{n}))$.  Since
$\overline{\eta} \circ \overline{\mu} ([A, B]) = [AB, AB]$, 
  we have a factorization
$$\xymatrix{ \XC{2} (\SUm{n}) \ar[rr]^-{\overline{\eta} \circ \overline{\mu}} \ar[dr]^\alpha  && \XC{2} (\SUm{n}) \\ & 
\XC{1} (\SUm{n}) \ar[ur]^\beta},$$
where $\alpha ([A, B]) = [AB]$ and $\beta ([A]) = [A, A]$.  However, $\XC{1} (\SUm{n})$ is contractible, since it is homeomorphic to the Weyl alcove (see \cite[p. 168]{DK}), and in particular $\pi_2 (\XC{1} (\SUm{n}) ) = 0$.  This completes the proof.
\end{proof}

\begin{rem}  A small modification to the above argument can be used to deduce the $n=2$ case directly.  We also note that the $n=3$ case can be handled by appealing to  the fact that  $\X_2(\SUm{3})$ is homotopy equivalent to an $8$--sphere \cite{FlLa}.
\end{rem}

Applying the Hurewicz Theorem gives the following corollaries for the special linear case.

\begin{cor}\label{betti-2} Let $G_n$ be either $\SLm{n}$ or $\SUm{n}$. Then
$$H_2 (\XC{r}(G_n); \Z)  = H^2(\XC{r}(G_n);\Z) = 0.$$
\end{cor}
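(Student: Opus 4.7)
The plan is to deduce this corollary directly from Theorem~\ref{pi2} combined with the Hurewicz theorem and the Universal Coefficient Theorem. The key observation is that for $G_n = \SLm{n}$ or $\SUm{n}$, the group $G_n$ is semisimple, so $DG_n = G_n$ and hence $G_n/DG_n$ is trivial. By Theorem~\ref{simplyconnected} this yields $\pi_1(\XC{r}(G_n)) \cong \pi_1(G_n/DG_n)^r = 0$, so $\XC{r}(G_n)$ is simply connected.

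Next, Theorem~\ref{pi2} gives $\pi_2(\XC{r}(G_n)) = 0$. Since $\XC{r}(G_n)$ is simply connected, the Hurewicz theorem in dimension $2$ provides an isomorphism
\[
\pi_2(\XC{r}(G_n)) \xrightarrow{\isom} H_2(\XC{r}(G_n); \Z),
\]
so $H_2(\XC{r}(G_n);\Z) = 0$. (Simple connectivity also yields $H_1(\XC{r}(G_n);\Z) = 0$ via the Hurewicz map in dimension $1$.)

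Finally, I would invoke the Universal Coefficient Theorem for cohomology, which gives a split short exact sequence
\[
0 \to \mathrm{Ext}^1_{\Z}\bigl(H_1(\XC{r}(G_n);\Z),\Z\bigr) \to H^2(\XC{r}(G_n);\Z) \to \mathrm{Hom}\bigl(H_2(\XC{r}(G_n);\Z),\Z\bigr) \to 0.
\]
Since both $H_1$ and $H_2$ vanish, the outer terms are zero, and therefore $H^2(\XC{r}(G_n);\Z) = 0$ as well. There is no real obstacle here — the corollary is a formal consequence of Theorem~\ref{pi2} together with the semisimplicity of $\SLm{n}$ and $\SUm{n}$, which is exactly what makes these two cases collapse down to the simply connected situation where Hurewicz applies cleanly.
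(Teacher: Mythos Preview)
Your proof is correct and follows essentially the same approach as the paper: simple connectivity of $\XC{r}(G_n)$ (from Theorem~\ref{simplyconnected}) plus $\pi_2 = 0$ (from Theorem~\ref{pi2}) yields $H_2 = 0$ via Hurewicz, and then the Universal Coefficient Theorem gives $H^2 = 0$. Your version is slightly more explicit in spelling out why $\XC{r}(G_n)$ is simply connected and in writing out the UCT sequence, but the argument is the same.
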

\begin{proof}
Since $\XC{r}(\SLm{n})$ and $\XC{r}(\SUm{n})$ are simply connected, Hurewicz Theorem 
provides isomorphisms $\pi_{2}(\X_{r}(G_n))\cong H_{2}(\X_{r}(G_n);\Z)$, 
and the Universal Coefficient Theorem further implies that $H^2(\XC{r}(G_n);\Z)=0$.
\end{proof}

Recent results in \cite{CFLO} allow us to extend our analysis of $\pi_2$ to certain real reductive Lie groups.

\begin{cor}
Let $G = \mathsf{U}(p,q)$ or $\mathsf{Sp}(2n,\R)$.  Then $\pi_2(\XC{r}(G)) = 0$.
\end{cor}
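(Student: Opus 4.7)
The approach is to reduce to the compact case already handled by Theorem~\ref{pi2}, via a real-analogue of the deformation retraction of Theorem~\ref{deformretract}. The paper \cite{CFLO} establishes, for real reductive Lie groups $G$ with maximal compact subgroup $K$, that $\XC{r}(K)$ is a strong deformation retract of $\XC{r}(G)$. Hence it suffices to compute $\pi_{2}(\XC{r}(K))$ where $K$ is the maximal compact subgroup of the given real group: for $G = \mathsf{U}(p,q)$ one has $K = \mathsf{U}(p) \times \mathsf{U}(q)$, and for $G = \mathsf{Sp}(2n,\R)$ one has $K = \Um{n}$.

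In the symplectic case we are immediately done, since $\pi_{2}(\XC{r}(\Um{n})) = 0$ by Theorem~\ref{pi2}. For $G = \mathsf{U}(p,q)$, I would observe that for any Lie groups $K_1, K_2$ the conjugation action of $K_1 \times K_2$ on $\hom(\F_r, K_1 \times K_2) \cong \hom(\F_r, K_1) \times \hom(\F_r, K_2)$ splits as a product of the two factor actions, so passing to orbit spaces yields a canonical homeomorphism
\[
\XC{r}(K_1 \times K_2) \;\cong\; \XC{r}(K_1) \times \XC{r}(K_2).
\]
Applying this with $K_1 = \mathsf{U}(p)$ and $K_2 = \mathsf{U}(q)$, and using the K\"unneth-type identity $\pi_{2}(A\times B) \cong \pi_{2}(A)\oplus \pi_{2}(B)$, the result $\pi_{2}(\XC{r}(\mathsf{U}(m))) = 0$ from Theorem~\ref{pi2} gives $\pi_{2}(\XC{r}(\mathsf{U}(p)\times \mathsf{U}(q))) = 0$.

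The only non-routine ingredient is the deformation retraction from $\XC{r}(G)$ to $\XC{r}(K)$ for these non-compact real reductive groups, but this is precisely the content of \cite{CFLO} that is invoked in the statement. Everything else is formal: the product decomposition of the character variety of a product group and the computation of $\pi_{2}$ of a product, combined with the already proven Theorem~\ref{pi2} for $\Um{n}$. Thus no further technical work is required beyond quoting the relevant result from \cite{CFLO}.
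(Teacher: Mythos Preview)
Your proposal is correct and follows essentially the same approach as the paper: invoke the deformation retraction result of \cite{CFLO} to reduce to the maximal compact subgroup, then apply Theorem~\ref{pi2}. The paper states the homotopy equivalence $\XC{r}(\mathsf{U}(p,q)) \simeq \XC{r}(\Um{p}) \times \XC{r}(\Um{q})$ as a direct consequence of \cite{CFLO}, leaving the product decomposition $\XC{r}(K_1\times K_2)\cong \XC{r}(K_1)\times \XC{r}(K_2)$ implicit, whereas you spell it out; but the arguments are the same.
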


\begin{proof}
Since the maximal compact subgroup of $\mathsf{U}(p,q)$ is $\mathsf{U}_p\times \mathsf{U}_q$, the main result in \cite{CFLO} shows that 
$\XC{r}(\mathsf{U}(p,q))$ is homotopic to $\XC{r}(\mathsf{U}_p)\times \XC{r}(\mathsf{U}_q)$.  Therefore, Theorem \ref{pi2} shows that $\pi_2(\XC{r}(\mathsf{U}(p,q))) = 0$.

The result for $\XC{r}(\mathsf{Sp}(2n,\R))$ follows similarly, since this space is homotopy equivalent to $\XC{r}(\mathsf{U}_n)$.
\end{proof}

Based on the above, we make the following conjecture.  Note that complex reductive $\C$--groups and compact Lie groups are real reductive.

\begin{conj}\label{conj}
Let $G$ be a connected real reductive Lie group.  Then $\pi_2(\X_r(G))$ is trivial.
\end{conj}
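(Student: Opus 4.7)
The plan is to prove the conjecture via a hierarchical reduction: from general real reductive groups to compact Lie groups, from compact to semisimple compact, and from semisimple to simple. At the top of the hierarchy, I would extend Theorem~\ref{deformretract} to real reductive $G$, showing that for $K$ a maximal compact subgroup the inclusion $\XC{r}(K)\injects\XC{r}(G)$ induces an isomorphism on $\pi_2$; the Cartan decomposition $G = K\exp(\mathfrak{p})$ is the natural starting point, and the special cases $\mathsf{U}(p,q)$ and $\mathsf{Sp}(2n,\R)$ handled in \cite{CFLO} support the feasibility of such a generalization. For compact $K$, Theorem~\ref{higherhomotopy} reduces the problem to the semisimple group $DK$, and writing $DK\isom(\prod_i K_i)/F$ with each $K_i$ simple and simply connected and $F$ a finite central subgroup, the decomposition $\XC{r}(\prod_i K_i)\isom\prod_i \XC{r}(K_i)$ together with a covering-space analysis of the induced $F^r$-quotient should further reduce to the case of each simple, simply connected compact $K_i$.

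For such a $K$, I would follow the template of Theorem~\ref{pi2}. First, compute $\pi_2(\XC{r}(K)^{good})$ via the principal $PK$-bundle $PK\to\hom(\F_r,K)^{good}\to\XC{r}(K)^{good}$: using that $\pi_2$ of any Lie group vanishes, that $\pi_1(PK)\isom Z(K)$ is finite, and (assuming the non-good locus in $K^r$ has real codimension at least $4$) that $\pi_1(\hom(\F_r,K)^{good}) = 0$, the long exact sequence of the bundle should yield $\pi_2(\XC{r}(K)^{good})\isom Z(K)$. Second, verify the hypotheses of Proposition~\ref{trans-prop} for the pair $\XC{r}(K)^{good}\subseteq\XC{r}(K)$ to conclude that the inclusion is $2$-connected; combined with the first step, this makes $\pi_2(\XC{r}(K))$ a cyclic quotient of the finite group $Z(K)$. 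Third, carry out the stabilization argument of Theorem~\ref{pi2}: the natural maps between $\XC{r-1}(K)$ and $\XC{r}(K)$ induced by the projection and inclusion between free groups give an isomorphism on $\pi_2$, and the multiplication trick using $\mu\co(A,B)\mapsto(A,B,AB)$ and $\eta\co(A,B,C)\mapsto(AB,C)$ factors $\pi_2(\XC{2}(K))$ through $\pi_2(\XC{1}(K))$, which vanishes because $\XC{1}(K)\isom T/W$ is contractible (the Weyl alcove).

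The principal obstacle is verifying the hypotheses of Proposition~\ref{trans-prop} for groups lacking property CI, i.e., essentially every simple compact $K$ other than $\SUm{n}$: the non-good locus strictly contains the reducible locus, so its codimension and local topological structure must be controlled independently. In particular, condition~(3)---the injectivity $\pi_1(V\setminus Y)\injects\pi_1(V)$ in neighborhoods of non-good points---requires knowing that $C_K(H)/Z(K)$ is connected for subgroups $H$ that are irreducible but yield positive-dimensional stabilizers in $PK$, and this is precisely where the disconnectedness phenomena exploited in Section~\ref{app-sec} (in connection with Sikora's question) become obstructions rather than tools. A secondary difficulty is the codimension estimate for the non-good locus in $K^r$, which for exceptional $K$ would require explicit information on the possible centralizer dimensions; a uniform estimate would likely need a case-by-case analysis via the classification of simple compact Lie groups.
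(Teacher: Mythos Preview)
The statement is labeled a \emph{conjecture} in the paper and is not proved there; the paper establishes it only for $\GLm{n}$, $\SLm{n}$, $\Um{n}$, $\SUm{n}$ (Theorem~\ref{pi2}) and for the real forms $\mathsf{U}(p,q)$ and $\mathsf{Sp}(2n,\R)$ via \cite{CFLO}. There is no proof in the paper to compare your proposal against.

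Your outline is a sensible research program, and the reductions through Theorem~\ref{higherhomotopy} and the simple-factor decomposition are sound. But the obstruction you yourself flag in the final paragraph is genuine and, as far as the paper is concerned, unresolved. For any simple, simply connected compact $K$ other than $\SUm{n}$, the group fails property CI, so $\XC{r}(K)^{good}\subsetneq\XC{r}(K)^{irr}$. To run the argument of Theorem~\ref{pi2} you need the inclusion $\XC{r}(K)^{good}\injects\XC{r}(K)$ to be $2$--connected, and the route through Proposition~\ref{trans-prop} and Lemma~\ref{locallysimplyconnected} requires connected $PK$--stabilizers at every non-good point. At an irreducible, non-good point the $PK$--stabilizer is a \emph{non-trivial finite group}, hence disconnected; your phrase ``positive-dimensional stabilizers in $PK$'' is a slip, since irreducibility forces the $PK$--stabilizer to be finite. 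Without connectedness, condition~(3) of Proposition~\ref{trans-prop} is not verified at such points, and the surjection $\pi_2(\XC{r}(K)^{good})\surjects\pi_2(\XC{r}(K))$ on which the multiplication trick depends is unavailable. The separate codimension estimate for the non-good locus is an additional gap. In short, you have correctly located precisely why this remains a conjecture rather than a theorem.
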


\begin{rem}$\label{Ste-rmk}$
For $r=1$ the above conjecture is true since $G\aq G$ is contractible if $G$ is simply connected: this follows from results in \cite{Ste}. 
\end{rem}

We end this section by noting that the codimension of the smooth locus in $\X_r (G_n)$ grows linearly in both $r$ and $n$.  Theorems \ref{periodicity} and \ref{pi2} show that the map $\X_r (G_n)^{sm} \to \X_r (G_n)$ is not 3--connected when $n\geqs 2$, so this gives examples in which the inclusion of the smooth locus is not highly connected, despite the singular locus lying in arbitrarily high codimension.

\begin{prop}\label{codim} Assume $r, n\geqs 2$.  Let $G_n$ be either
$\GLm{n}$ or $\SLm{n}$. Then the  complex 
codimension of $\XC{r}(G_n)^{sing}$ is at least $3$, and grows linearly
in $r$ and $n$. Similarly, for $G_n$ equal to $\Um{n}$ or $\SUm{n}$
the real codimension of $\XC{r}(G_n)^{sing}$ is at least $3$ and grows linearly.\end{prop}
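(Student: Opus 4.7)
The strategy is to identify the singular locus of $\XC{r}(G_n)$ with its reducible locus and then compute the codimension of the latter directly from the dimensions of Levi subgroups of maximal parabolics. When $(r-1)(n-1)\geqs 2$, results from \cite{FlLa2} cited at the start of Section~\ref{linear-sec} give $\XC{r}(G_n)^{sm}=\XC{r}(G_n)^{irr}$, so $\XC{r}(G_n)^{sing}=\XC{r}(G_n)^{red}$. In the remaining range $r=n=2$ we have $\XC{2}(\SLm{2})\isom\C^3$ smooth; Proposition~\ref{etale} then shows that $\XC{2}(\GLm{2})$ is also smooth, and by definition so are the compact character varieties $\XC{2}(\SUm{2})$ and $\XC{2}(\Um{2})$. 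In this case the singular locus is empty and the codimension claim holds vacuously.

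Assume now $(r-1)(n-1)\geqs 2$. By Lemma~\ref{H_P}, the irreducible components of $\XC{r}(G_n)^{red}$ are indexed by conjugacy classes of maximal proper parabolic subgroups $P_k$ of $G_n$, for $k=1,\ldots,\lfloor n/2\rfloor$, with Levi subgroup $L_k$ equal to $\GLm{k}\times\GLm{n-k}$ (or its intersection with $\SLm{n}$, or the compact analog). By \cite{Si4} in the complex case and by Lemma~\ref{K-red} in the compact case, every polystable reducible representation is conjugate to one factoring through some such $L_k$. The $k$-th component of $\XC{r}(G_n)^{red}$ is therefore the image of the natural map $\XC{r}(L_k)\to\XC{r}(G_n)$.

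A generic polystable representation $\rho\in\hom(\F_r,L_k)$ is a direct sum of two irreducibles of distinct dimensions $k$ and $n-k$ (generically non-isomorphic when $k=n-k$), so the $G_n$-stabilizer of $\rho$ coincides with $Z(L_k)$. Hence the map $\XC{r}(L_k)\to\XC{r}(G_n)$ is generically finite, and its image has dimension $\dim\XC{r}(L_k)=(r-1)\dim L_k+\dim Z(L_k)$. Subtracting from $\dim\XC{r}(G_n)=(r-1)\dim G_n+\dim Z(G_n)$ yields the codimension
\[
(r-1)\bigl(\dim G_n-\dim L_k\bigr)-\bigl(\dim Z(L_k)-\dim Z(G_n)\bigr)=2(r-1)\,k(n-k)-1,
\]
using $\dim G_n-\dim L_k=2k(n-k)$ and $\dim Z(L_k)-\dim Z(G_n)=1$ in each of the four cases. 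This is minimized at $k=1$ or $n-1$, yielding codimension $2(r-1)(n-1)-1\geqs 3$ and manifestly growing linearly in both $r$ and $n$.

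The compact cases $\Um{n}$ and $\SUm{n}$ proceed by the identical dimension arithmetic with real dimensions, since $\dim_\R\Um{n}=\dim_\C\GLm{n}$ and $\dim_\R\SUm{n}=\dim_\C\SLm{n}$; Lemma~\ref{finiteKconjugates} ensures that the intersections $L_k\cap K$ exhaust the relevant compact Levi-type subgroups. The main technical point, in all four cases, is verifying that a generic polystable representation into $L_k$ has $G_n$-stabilizer equal to $Z(L_k)$ and no larger; once this non-isomorphism argument for the two summands is in place, the remainder of the proof is bookkeeping with the dimension formula $\dim\XC{r}(H)=(r-1)\dim H+\dim Z(H)$.
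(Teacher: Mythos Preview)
Your proof is correct and follows essentially the same approach as the paper: identify $\XC{r}(G_n)^{sing}=\XC{r}(G_n)^{red}$ when $(r-1)(n-1)\geqs 2$, handle $r=n=2$ separately by smoothness, and compute the dimension of the reducible locus via the maximal Levi $L_1=\GLm{1}\times\GLm{n-1}$, obtaining the same codimension $2(r-1)(n-1)-1$. The paper is terser---it simply cites ``an argument analogous to Lemma~\ref{linear-codim}'' and writes down $\dim\XC{r}(\GLm{n})^{red}=(n-1)^2(r-1)+1+r$ directly---whereas you spell out the generic-stabilizer justification for why $\XC{r}(L_k)\to\XC{r}(G_n)$ is generically finite onto its image; but the underlying computation is identical.
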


\begin{proof} First consider the case $G_n = \GLm{n}$.
If $r\geqs2$ and $n\geqs3$, or $r\geqs3$ and $n\geqs2$, then $\XC{r}(\GLm{n})^{sing} = \XC{r}(\GLm{n})^{red}$
and an argument analogous to the proof of Lemma~\ref{linear-codim} gives
\begin{eqnarray*}
\mathrm{codim}_{\C} \left(\XC{r}(\GLm{n})^{sing}\right) & = & \dim_{\C} \left(\XC{r}(\GLm{n})\right) - \dim_{\C} \left(\XC{r}(\GLm{n})^{sing}\right)\\
 & = & (n^{2}(r-1)+1)-((n-1)^{2}(r-1)+1+r)\\
 & = & 2(n-1)(r-1)-1\\
 & \geqs & 3.
\end{eqnarray*}
In the case that $r=2=n$, the functions $\tr{A}, \tr{B}, \tr{AB}, \det(A)$, and $\det(B)$ induce an isomorphism $\mathfrak{gl}_2\aq\GLm{2} \isom \C^5$~\cite{DrFo}, and hence
$\XC{2}(\GLm{2})\cong\C^{3}\times(\C^{*})^{2}$
is smooth and so the complex codimension of the singular locus
is 5. Thus, $\mathrm{codim}_{\R} \left(\XC{2}(\GLm{2})^{sing}\right)\geqs6$. 

Restricting the determinant to be 1, we obtain for $r\geqs2$ and $n\geqs3$,
or $r\geqs3$ and $n\geqs2$: 
\begin{eqnarray*}
\mathrm{codim}_{\C}\left(\XC{r}(\SLm{n})^{sing}\right) & = & \dim_{\C}\left(\XC{r}(\SLm{n})\right)-\dim_{\C}\left(\XC{r}(\SLm{n})^{sing}\right)\\
 & = & (n^{2}-1)(r-1)-((n-1)^{2}(r-1)+1)\\
 & = & 2(n-1)(r-1)-1\\
 & \geqs & 3.
\end{eqnarray*}
If $r=n=2$, the above description of $\XC{2}(\GLm{2})$ shows that $\XC{2}(\SLm{2})\cong\C^{3}$ and
is smooth, so the complex codimension of $\XC{2}(\SLm{2})^{sing}$ is 3.

The cases of $\Um{n}$ and $\SUm{n}$ are similar.
\end{proof}

\section{An application to Lie groups}\label{app-sec}

In this section, we apply our topological methods to study centralizers of subgroups in compact connected Lie groups and connected reductive $\C$--groups.  As noted previously, every subgroup $H\leqs \Um{n}$ has connected centralizer \cite[Lemma 4.3]{Ramras2}.  As observed in \cite[Section 1.2]{Humphreys-conjugacy}, the same is true for $\GLm{n}$ if one restricts attention to finitely generated subgroups $H \leqs \GLm{n}$, because the centralizer $C_{\GLm{n}} (H)$ is Zariski open in the vector space
$$\{A\in M_n (\C) \,:\, AhA^{-1} = h \textrm{ for all } h\in H\}.$$

To our knowledge, $\Um{n}$ and $\GLm{n}$ are the only known groups with such properties.  It is a deep theorem of Springer and Steinberg that semisimple elements in simply connected semisimple algebraic groups over $\C$ have connected centralizers (for an exposition, see \cite{Humphreys-conjugacy}).  Here we will show that the presence of torsion in $\pi_1 (G)$ forces the existence of subgroups whose centralizers are \e{disconnected} 
(even after killing the center of $G$).

\begin{thm}\label{disconnected} Let $G$ be either a connected reductive $\C$--group, or a compact connected Lie group.  If $\pi_1 (DG) \neq 1$, then there exists a finitely generated subgroup $H \leqs G$ such that $C_G (H)/Z(G)$ is disconnected.
\end{thm}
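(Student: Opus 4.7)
The plan is to argue by contradiction, combining a topological reformulation with an explicit Lie-theoretic construction.

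Suppose every finitely generated $H\leqs G$ has $C_G(H)/Z(G)$ connected. By Proposition~\ref{irred-stab}, any irreducible $\rho\co\F_r\to G$ has finite $PG$--stabilizer, which under our assumption must be trivial, so $\rho$ is good; hence $\XC{r}(G)^{irr}=\XC{r}(G)^{good}$ for every $r$, and Lemma~\ref{lem22BL} makes the quotient $\hom(\F_r,G)^{irr}\to\XC{r}(G)^{irr}$ a principal $PG$--bundle. Take $r$ large enough to invoke Theorem~\ref{gencodim}; transversality yields $\pi_i(\hom(\F_r,G)^{irr})\cong\pi_i(G)^r$ for $i\leqs 2$, so $\pi_2(\hom(\F_r,G)^{irr})=0$. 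Combined with Lemma~\ref{nullhomotopy}, which forces $\pi_1(PG)\to\pi_1(\hom(\F_r,G)^{irr})$ to be zero, the long exact homotopy sequence of the $PG$--bundle collapses to $\pi_2(\XC{r}(G)^{irr})\cong\pi_1(PG)$. Writing $\wt{DG}$ for the simply connected cover of $DG$, one checks (from $G\cong (DG\times T)/F$) that $PG=P(DG)=\wt{DG}/Z(\wt{DG})$, so $\pi_1(PG)=Z(\wt{DG})$, which contains the nontrivial subgroup $\pi_1(DG)$.

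To reach the contradiction, I would construct the prohibited $H$ explicitly. Fix $z\in\pi_1(DG)\setminus\{1\}\leqs Z(\wt{DG})$. By a classical Lie-theoretic result one finds $\wt x\in T\leqs \wt{DG}$ and $\wt w\in N_{\wt{DG}}(T)$ representing a nontrivial Weyl class such that $\wt w\wt x\wt w^{-1}=z\wt x$. Set $\bar x=\pi(\wt x)$ and $\bar w=\pi(\wt w)$. Then in $DG$ (hence in $G$), $\bar w\bar x\bar w^{-1}=\pi(z\wt x)=\bar x$, so $\bar w\in C_G(\langle\bar x\rangle)$. For $\wt x$ chosen regular in $T$, the identity component $(C_G(\langle\bar x\rangle))^\circ$ is the image of $T$ and does not contain $\bar w$, while $\bar w\notin Z(G)$ because $[\wt w,\wt x]=z\neq 1$ rules out $\wt w\in Z(\wt{DG})$. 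Hence $H=\langle\bar x\rangle$ is a finitely generated cyclic subgroup with $C_G(H)/Z(G)$ disconnected, contradicting the initial assumption.

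The \textbf{main obstacle} is the commutator-lifting step in the second paragraph: for an arbitrary nontrivial $z\in\pi_1(DG)$, producing $(\wt w,\wt x)$ with $\wt w\wt x\wt w^{-1}=z\wt x$ and $\wt x$ regular. In coweight/coroot language this amounts to realising a chosen class in $\pi_1(DG)\leqs P^\vee/Q^\vee$ as $(w-1)H\pmod{Q^\vee}$ for some $H\in\mathfrak{t}$ and $w\in W$; for a simple reflection $w=s_\alpha$ the image of $w-1$ is only $\R\alpha^\vee$, which is generally too small, so the argument requires longer Weyl words and a case-by-case check on root systems with nontrivial $P^\vee/Q^\vee$. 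Where the $r=1$ realisation fails for particular $(G,z)$, one passes to $r>1$ and instead exhibits a nontrivial $\pi_1(DG)^r$--stabilizer at an irreducible point of $\XC{r}(\wt{DG})$, using the factorisation $\XC{r}(DG)\cong \XC{r}(\wt{DG})/\pi_1(DG)^r$ to transport this back to a disconnected centralizer in $G$.
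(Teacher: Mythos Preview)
Your proposal does not close the contradiction, and the two paragraphs are not actually connected.  In the first paragraph you correctly deduce, from the assumption that all $C_G(H)/Z(G)$ are connected, that $\XC{r}(G)^{irr}=\XC{r}(G)^{good}$ and then compute $\pi_2(\XC{r}(G)^{irr})\cong\pi_1(PG)\neq 0$.  But this is not a contradiction to anything: you have not compared this group to another computation of the same homotopy group.  (Indeed, for $G=\GLm{n}$ this isomorphism holds, as in Theorem~\ref{periodicity}, without contradicting anything.)  You then abandon the topology entirely and attempt a direct Lie-theoretic construction, which you yourself flag as incomplete; realising an arbitrary class in $P^\vee/Q^\vee$ as $(w-1)H\pmod{Q^\vee}$ is genuinely nontrivial, and the ``pass to $r>1$'' escape is too vague to count as an argument.

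The paper's proof closes the loop purely topologically, at the level of $\pi_1$ rather than $\pi_2$, and the key step you are missing is this: the hypothesis that every $C_G(H)/Z(G)$ is connected is \emph{exactly} what is needed (via Lemmas~\ref{centralizer-SDR} and~\ref{locallysimplyconnected}) to verify the local simple-connectedness hypotheses of Proposition~\ref{trans-prop}, and hence to conclude from Theorem~\ref{trans} that the inclusion $\XC{r}(K)^{irr}\hookrightarrow\XC{r}(K)$ is $2$--connected.  This gives $\pi_1(\XC{r}(K)^{irr})\cong\pi_1(\XC{r}(K))\cong\pi_1(K/DK)^r$.  On the other hand, the CI property (which you did observe) feeds into Theorem~\ref{fund-thm-CI} to give $\pi_1(\XC{r}(K)^{irr})\cong\pi_1(K)^r$.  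Since $\pi_1(DK)\cong\pi_1(DG)\neq 1$, these two descriptions are incompatible, and \emph{that} is the contradiction.  Your argument never invokes Proposition~\ref{trans-prop} and so never exploits the connected-stabilizer hypothesis in its essential role.
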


Note that for every finitely generated subgroup $H \leqs G$, we have $C_G (H) = C_G (\overline{H})$, where $\overline{H}$ is the Zariski closure of $H$ in $G$.  Hence in fact Theorem~\ref{disconnected} implies that there exist Zariski closed subgroups in $G$ with disconnected centralizers.

\begin{proof}  Let $K$ be a maximal compact subgroup of $G$ (so $K = G$ if $G$ is compact). 
Assume, for a contradiction, that for every finitely generated subgroup $H\leqs G$, the quotient $C_G (H)/Z(G)$ is connected.  
Lemma~\ref{centralizer-SDR} implies that  $C_K (H)/Z(K)$ is connected for every finitely generated subgroup $H\leqs K$.  
Hence $K$ is a CI group, and by Theorem \ref{fund-thm-CI}, we have
$$\pi_1 (\XC{r} (K)^{irr}) \isom \pi_1 (K)^r$$
for $r\geqs 3$.

On the other hand, we claim that the hypotheses of  Theorem~\ref{trans} are satisfied, meaning that the map
$$\pi_1 (\XC{r} (K)^{irr}) \maps \pi_1 (\XC{r} (K)) \isom \pi_1 (K/DK)^r$$
is an isomorphism.  This is impossible, since the fibration sequence $$DK \to K \to K/DK$$ yields a split short exact sequence on fundamental groups, and $\pi_1 (DK) \isom \pi_1 (DG) \neq 0$ by hypothesis.

To check that the hypotheses of Theorem~\ref{trans} are satisfied, first note that our hypotheses on $G$ guarantee that for sufficiently large $r$, we have $\codim_{\mathbb{R}} \left(\hom(\F_r, K)^{irr}\right) \geqs 4$ (see Theorem~\ref{gencodim}).
Hence $\hom(\F_r, K)^{irr}$ is dense in the smooth variety $\hom(\F_r, K) \isom K^r$, and hence $\XC{r} (K)^{irr}$ is dense in $\XC{r} (K)$.
Next, since the image of a reducible representation $\rho\in \hom(\F_r, K)$ is a finitely generated subgroup, we know that 
$\mathrm{Stab}(\rho)/Z(K) = C_K (\textrm{Im} (\rho))/Z(K)$ is connected, and by Lemma \ref{locallysimplyconnected} there exist arbitrarily small contractible neighborhoods of $[\rho]$ in $\XC{r}(K)$ whose irreducible points form a simply connected subset.
\end{proof}

\section{The singular locus}\label{sing-sec}

In \cite{FlLa2}, it is shown for $G=\SLm{n}$ or $\GLm{n}$ that $$\XC{r}(G)^{sm}=\XC{r}(G)^{good}=\XC{r}(G)^{irr}$$ if $(r-1)(n-1)\geqs 2$. As shown in \cite{HP}, this result does not even generalize to $G=\p\SLm{2}$.
In this section, we address the following conjecture.

\begin{conj}[Conjectures 3.34 and 4.8 in \cite{FlLa2}] \label{red-conj}  If $r\geqs 3$, or $r\geqs 2$ and $\mathrm{Rank}(G)$ is sufficiently large, then
 $$\XC{r}(G)^{red}\subset \XC{r}(G)^{sing}.$$
 \end{conj}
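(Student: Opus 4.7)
My plan is to reduce to the semisimple case and then invoke Richardson's analysis of $G^r\aq G$. First I would use Proposition~\ref{etale} to write $\XC{r}(G)^{sing} \cong \XC{r}(DG)^{sing} \times_{F^r} T^r$, together with Corollary~\ref{P-cor} which gives $\XC{r}(G)^{red} \cong \XC{r}(DG)^{red} \times_{F^r} T^r$. Since $T^r$ is smooth, the inclusion $\XC{r}(G)^{red} \subset \XC{r}(G)^{sing}$ reduces to its analog for the semisimple group $DG$, and hence I may assume $G$ is connected semisimple.

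Next, given $[\rho] \in \XC{r}(G)^{red}$, the orbit $G\rho$ is closed (polystable), so by \cite[Theorem 30]{Si4} complete reducibility furnishes a proper parabolic $P < G$ and a Levi subgroup $L \leqs P$ with $\rho(\F_r) \leqs L$. The identity component $Z(L)^0$ is a positive-dimensional central torus in $L$; since $Z(G)$ is finite for semisimple $G$, we have $Z(L)^0 \not\subseteq Z(G)$, and since $Z(L)$ centralizes $\rho(\F_r)$, the quotient $\mathrm{Stab}_G(\rho)/Z(G)$ has positive dimension. In particular, $\rho$ fails to be good.

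Now I would apply Luna's \'etale slice theorem: an \'etale neighborhood of $[\rho]$ in $\XC{r}(G) = G^r \aq G$ is isomorphic to $V \aq H$, where $H = \mathrm{Stab}_G(\rho)$ is a reductive $\C$--group by Lemma~\ref{reductive-stab} and $V$ is the slice representation at $\rho$. Theorem~\ref{gencodim}, together with the hypothesis on $r$ (or on $r$ and $\mathrm{Rank}(G)$), ensures the good locus in $G^r$ is Zariski dense, so the principal (generic) stabilizer for $G$ acting by conjugation on $G^r$ coincides with $Z(G)$. Richardson's results in \cite{Ri} then identify the smooth locus of $G^r \aq G$ with the image of orbits whose stabilizer lies in this principal conjugacy class; since $\mathrm{Stab}_G(\rho)$ positive-dimensionally exceeds $Z(G)$ by the previous step, $[\rho]$ cannot be smooth and must lie in $\XC{r}(G)^{sing}$.

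The hard part will be invoking Richardson's theorem in the correct form. A general reductive action $H \curvearrowright V$ with $H/Z(G)$ positive-dimensional need not produce a singular GIT quotient at the origin: for instance, $\C^*$ acting on $\C^2$ with opposite weights has the smooth quotient $\C$. The proof must therefore exploit the particular structure of the slice representation arising from conjugation on $G^r$, which is precisely the content of Richardson's input and the reason the reduction to semisimple $G$ is essential; this is also where the hypotheses on $r$ and $\mathrm{Rank}(G)$ enter, since otherwise the principal stratum might not consist of good representations and the conjecture could fail.
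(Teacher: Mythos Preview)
This statement is a \emph{conjecture}, and the paper does not prove it as stated. In fact, the paper explicitly \emph{disproves} the $r=2$ clause: the examples immediately following the conjecture (e.g.\ $G\times\PSLm{2}$ for any reductive $G$, and $\mathsf{SO}_4(\C)$) exhibit reductive groups of arbitrarily large rank for which $\XC{2}(G)^{red}$ contains smooth points. So any argument purporting to establish the conjecture under the hypothesis ``$r\geqs 2$ and $\mathrm{Rank}(G)$ sufficiently large'' must be wrong.

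Your outline is essentially the paper's proof of the \emph{modified} result, Theorem~\ref{conj-thm}: reduce to semisimple $G$ via Proposition~\ref{etale} and Corollary~\ref{P-cor}, observe that reducibles are not good (positive-dimensional $PG$--stabilizer), and then cite Richardson~\cite{Ri} to identify $\XC{r}(G)^{sm}$ with $\XC{r}(G)^{good}$. The gap in your version is precisely the last step. Richardson's theorem (specifically \cite[Theorem~8.9]{Ri}) requires that every simple factor of $\mathfrak{g}$ have rank at least $2$; it does \emph{not} follow merely from the good locus being dense or from the principal isotropy class being $Z(G)$. Your claim that density of the good locus suffices to invoke Richardson is exactly what fails in the counterexamples: for $G\times\PSLm{2}$ with $r=2$, the good locus is dense, the principal stabilizer is the center, yet some reducibles are smooth because the rank-$1$ factor $\PSLm{2}$ violates Richardson's hypothesis. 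The $r\geqs 3$ case with rank-$1$ simple factors remains open in the paper (though it is resolved for $\PSLm{n}$ via a separate argument in Theorem~\ref{redsingthm} and Corollary~\ref{pslconj}).
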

Note that \cite[Remark 3.33]{FlLa2} shows that $\XC{2}(\PSLm{2})$ has smooth points which are reducible and singular points which are irreducible; so a condition on the rank of $G$ when $r=2$ is necessary.  However, the $r=2$ case of this conjecture requires modification, due to the examples below.  We prove a modified version in Theorem \ref{conj-thm}.

\begin{exam}
Let $G$ be any connected reductive $\C$-group. Then there exists a reducible smooth point and a irreducible singular point in $\XC{2}(G\times \PSLm{2})$.  Let $[\rho]\in \XC{2}(G)^{good}$ and let $[\psi_1]\in \XC{2}(\PSLm{2})^{red}\cap \XC{2}(\PSLm{2})^{sm}$ and $[\psi_2]\in \XC{2}(\PSLm{2})^{irr}\cap \XC{2}(\PSLm{2})^{sing}$.  Then clearly, $[\rho\oplus \psi_1]$ has positive dimensional stabilizer and so is reducible, but yet it is in $\XC{2}(G)^{sm}\times \XC{2}(\PSLm{2})^{sm}$ and so is a smooth point.  On the other hand, $[\rho\oplus \psi_2]$ has a finite stabilizer and so is irreducible $($but not good$)$, but is in $\XC{2}(G)^{sm}\times \XC{2}(\PSLm{2})^{sing}$ and so is a singular point.

This shows that there are Lie groups $H$ of arbitrarily large rank with the property that $\XC{2}(H)$ has smooth reducibles and singular irreducibles.
\end{exam} 
 
We now give an example to show that the groups $H$ in the above example do not only arise as products with rank 1 Lie groups.

\begin{exam}
In \cite{Si6}, $\XC{2}(\mathsf{SO}_4(\C))=\hom(\F_2,\mathsf{SO}_4(\C))\aq \mathsf{SO}_4(\C)$ is explicitly described.  Consider 
$$\rho=\left(\frac1{12}\left(
\begin{array}{cccc}
  {37}  &  {35 i}  & 0 & 0 \\
 - {35 i}  &  {37}  & 0 & 0 \\
 0 & 0 &  {13}  &  {5 i}  \\
 0 & 0 & - {5 i}  &  {13}  \\
\end{array}
\right), \, \, \,  \frac1{40}\left(
\begin{array}{cccc}
  {401}  &  {399 i}  & 0 & 0 \\
 - {399 i}  &  {401}  & 0 & 0 \\
 0 & 0 &  {41}  &  {9 i}  \\
 0 & 0 & - {9 i}  &  {41}  \\
\end{array}
\right)\right)$$ 
in $\hom(\F_2,\mathsf{SO}_4(\C))$.  It is clearly reducible and completely reducible.  
However, as the relations and generators are explicitly computed in \cite{Si6} for this variety, in {\it Mathematica} we can compute the rank of the Jacobian matrix at $\rho$, finding it to be $11$.  There are $17$ generators in the presentation for the coordinate ring and the variety has dimension $6$.  Thus, $\rho$ is a smooth point if and only if the rank is $11$.  Therefore, we have a smooth point in $\X_2 (\mathsf{SO}_4(\C))$ that is not in $\X_2 (\mathsf{SO}_4(\C))^{irr}$,  arising from a reductive group of semisimple rank $2$ that is not a product with a rank $1$ group.  However, note that the simple factors are each of rank $1$.

\end{exam}

The examples above show that for $r=2$ the rank of the Lie group $G$ being large is not sufficient for Conjecture \ref{red-conj} to hold, although we do believe the conjecture for $r\geqs3$ without any condition on the rank of $G$.

We now mostly resolve Conjecture \ref{red-conj} by showing that if each simple factor in the derived subgroup of $G$ has rank at least 2 then the reducibles are always singular.  For $r=2$, we expect this to be a sharp condition on the Lie group $G$.

\begin{thm}\label{conj-thm} Let $r\geqs 2$.
If $G$ is a connected reductive $\C$--group such that the Lie algebra of $DG$ has simple factors of rank 2 or more, then:
\begin{enumerate}
 \item[$(1)$] $\XC{r}(G)^{red}\subset \XC{r}(G)^{sing}$,
 \item[$(2)$] $\XC{r}(G)^{irr}-\XC{r}(G)^{good}\subset \XC{r}(G)^{sing}$, and all points in $\XC{r}(G)^{irr}-\XC{r}(G)^{good}$ are orbifold singularities,
 \item[$(3)$] $\XC{r}(G)^{good}=\XC{r}(G)^{sm}$.
\end{enumerate}

\end{thm}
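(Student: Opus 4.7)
The plan is to reduce to the semisimple case, then invoke Richardson's work on the local structure of the GIT quotient $G^r /\!/ G$ via the Luna Slice Theorem.

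First, by Proposition~\ref{etale} and Corollary~\ref{P-cor}, each of the loci $\XC{r}(G)^{sm}$, $\XC{r}(G)^{sing}$, $\XC{r}(G)^{red}$, $\XC{r}(G)^{irr}$, and $\XC{r}(G)^{good}$ is \'etale-locally a product of its $DG$-counterpart with the central torus $T^r$. Since $T^r$ is smooth and the $F^r$-action is free, the three properties (smooth, reducible vs.~irreducible, good) all correspond under this equivalence. Moreover, the rank hypothesis is a condition on $DG$. Hence we may replace $G$ by $DG$ and assume $G$ is semisimple with each simple factor of rank $\geqs 2$.

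Next, for each polystable $\rho \in \hom(\F_r, G)^*$, Luna's Slice Theorem gives an \'etale local model for $\XC{r}(G)$ at $[\rho]$ of the form $V /\!/ Stab_G(\rho)$, where $V$ is the normal slice to the $G$-orbit at $\rho$, equipped with its natural $Stab_G(\rho)$-action. The quotient $\XC{r}(G)$ is smooth at $[\rho]$ if and only if this local model is smooth at the origin. Under our rank-$\geqs 2$ hypothesis on the simple factors of $\mathrm{Lie}(G)$ and the assumption $r \geqs 2$, Richardson's analysis \cite{Ri} of the slice representation shows that $V /\!/ Stab_G(\rho)$ is smooth at the origin if and only if $Stab_G(\rho) = Z(G)$, i.e., if and only if $\rho$ is good.

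Given this local description, parts (1)--(3) follow quickly. For (1), if $\rho$ is reducible, then $Stab_G(\rho)/Z(G)$ has positive dimension, hence $\rho$ is not good and $[\rho]\in\XC{r}(G)^{sing}$. For (2), if $\rho$ is irreducible but not good, then $Stab_G(\rho)/Z(G)$ is finite and non-trivial, so again by the local model $[\rho]$ is singular; moreover Lemma~\ref{lem22BL} exhibits $\XC{r}(G)^{irr}$ as the quotient of the smooth manifold $\hom(\F_r, G)^{irr}$ by the locally free $PG$-action, with finite non-trivial isotropy exactly at non-good points, which is precisely the definition of an orbifold singularity. Part (3) is now a bookkeeping statement: the inclusion $\XC{r}(G)^{good}\subseteq \XC{r}(G)^{sm}$ is immediate from Lemma~\ref{lem22BL} (the good locus is a complex manifold), and the reverse inclusion is the contrapositive of (1) combined with (2), since every point in the complement of $\XC{r}(G)^{good}$ is either reducible or irreducible non-good.

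The main obstacle is the careful invocation of Richardson's theorem: one must verify that the rank-$\geqs 2$ hypothesis on each simple factor prevents $Stab_G(\rho)$ from acting on the slice $V$ in a way that accidentally yields a smooth quotient. For finite stabilizers this means excluding the Chevalley--Shephard--Todd situation of a pseudo-reflection group action, and for positive-dimensional stabilizers this means ruling out an exceptional configuration in which $V /\!/ H$ happens to be a vector space. The rank hypothesis is what rules out these exceptional behaviors, exactly as exhibited by the failure in the $\p\SLm{2}$ example of Remark~3.33 in \cite{FlLa2}; accordingly, one expects the rank hypothesis to be essentially sharp when $r=2$.
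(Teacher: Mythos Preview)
Your proposal is correct and follows essentially the same route as the paper: reduce to the semisimple case via Proposition~\ref{etale} and Corollary~\ref{P-cor}, then invoke Richardson's result \cite[Theorem~8.9]{Ri} to obtain $\XC{r}(G)^{good}=\XC{r}(G)^{sm}$ in the semisimple case, from which (1) and (2) follow immediately since non-good points have non-trivial $PG$--stabilizer. The only difference is expository: the paper cites Richardson's theorem as a black box giving (3) directly, while you unpack its mechanism (Luna slice plus an analysis of when $V\aq Stab_G(\rho)$ is smooth), and you order the deductions as (1), (2), then (3) rather than (3) first---but since your ``local criterion'' from Richardson already \emph{is} statement (3), deriving (3) from (1) and (2) at the end is a slightly roundabout rephrasing of what you have already established.
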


\begin{proof}
By \cite[Theorem 8.9]{Ri} when $G$ is semisimple and the simple summands of the Lie algebra $\mathfrak{g}$ have rank at least 2, and $r\geqs 2$, we have $\XC{r}(G)^{good}=\XC{r}(G)^{sm}$.  Therefore, Proposition \ref{etale} and Corollary \ref{P-cor} together imply that $\XC{r}(G)^{good}=\XC{r}(G)^{sm}$ for any connected reductive $\C$--group whose derived group has simple factors of rank at least 2.

We then conclude that $\XC{r}(G)^{red}\subset \XC{r}(G)^{sing}$ since the $PG$--stabilizer of any reducible representation has positive dimension (hence they are not good).  For the same reason, $\XC{r}(G)^{irr}-\XC{r}(G)^{good}\subset \XC{r}(G)^{sing}$, and by Lemma \ref{lem22BL} these singularities are of orbifold type.  
\end{proof}

We have the following geometric corollary.

\begin{cor}
Let $DG$ have simple factors of rank at least $2$, and assume $r\geqs 2$.
If $G$ is not CI, then the orbifold singular locus in $\XC{r}(G)^{irr}$ is non-empty.
\end{cor}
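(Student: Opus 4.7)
By Theorem~\ref{conj-thm}(2) and (3), under the rank hypothesis on $DG$, the orbifold singular locus in $\XC{r}(G)^{irr}$ is precisely $\XC{r}(G)^{irr}\setminus \XC{r}(G)^{good}$. The plan is therefore to produce a representation $\rho\co \F_r \to G$ that is irreducible but not good; equivalently, the image $\rho(\F_r)$ must not lie in any proper parabolic of $G$, while the $PG$--stabilizer $C_G(\rho(\F_r))/Z(G)$ must be nontrivial (and hence, by irreducibility, finite).

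Since $G$ is not CI, I begin by choosing an irreducible subgroup $H \leqs G$ together with an element $x \in C_G(H) \setminus Z(G)$. Set $L := C_G(x)$, a closed algebraic subgroup of $G$ containing $H$. Since $H \subseteq L$ is not contained in any proper parabolic of $G$, neither is $L$. The problem now reduces to finding an irreducible representation $\rho\co \F_r \to G$ with image inside $L$: for such a $\rho$, the element $x$ lies in $C_G(\rho(\F_r))$ but not in $Z(G)$, so $\rho$ is irreducible but not good, placing $[\rho]$ in $\XC{r}(G)^{irr}\setminus \XC{r}(G)^{good}$ as desired.

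To build such $\rho$, I pass to the Zariski closure $\overline{H}\subseteq L$ of $H$, which is still not contained in any proper parabolic (parabolic subgroups are Zariski closed, so $H \nsubseteq P$ implies $\overline{H} \nsubseteq P$). The representation variety $\hom(\F_r,\overline{H}) = \overline{H}^r$ contains a Zariski open subset consisting of those $\rho$ that are irreducible as maps into $G$, so it suffices to exhibit a single such point. For $r \geqs 2$, classical Zariski density results for complex linear algebraic groups supply elements $g_1,\dots,g_r \in \overline{H}$ whose generated subgroup has Zariski closure equal to $\overline{H}$; since parabolics are Zariski closed, any such subgroup is irreducible in $G$. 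Defining $\rho(e_i) := g_i$ then yields the required representation.

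The main obstacle is the Zariski density step. When $r$ is large relative to the ``generic generator count'' of $\overline{H}$, or when $\overline{H}^0$ is semisimple, this follows directly from Borel-type density theorems. For small $r$, extra care is required if $\overline{H}$ has a substantial central torus. Here the hypothesis that the simple factors of $DG$ have rank at least $2$ should play a role, since it constrains the Zariski closure of irreducible subgroups to carry sufficient semisimple structure. Alternatively, one can sidestep the density argument by showing directly that the reducibility locus in $L^r$ is a \emph{proper} Zariski closed subset: the locus decomposes as a finite union, indexed by $G$--conjugacy classes of maximal parabolics, of closed subsets $H_{P_i}\cap L^r$, and if this union exhausted $L^r$ then an irreducible component of $L^r$ would lie in some $H_{P}$, eventually contradicting the irreducibility of $H\subseteq L$.
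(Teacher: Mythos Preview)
Your strategy is the natural one and is essentially what the paper has in mind: by Theorem~\ref{conj-thm}(2)--(3) the orbifold singular locus of $\XC{r}(G)^{irr}$ is exactly $\XC{r}(G)^{irr}\setminus\XC{r}(G)^{good}$, so the task is to exhibit an irreducible representation that is not good. The paper states the corollary without proof, so your write-up is in fact more careful than the paper in isolating what must be checked.

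That said, your argument has a genuine gap, and you are right to flag the Zariski-density step as the obstacle---but you do not close it. From an arbitrary irreducible subgroup $H$ with $x\in C_G(H)\setminus Z(G)$ you must produce an $r$-generated subgroup of $C_G(x)$ that remains irreducible in $G$, and neither of your two proposed routes in the last paragraph succeeds as written. The appeal to the rank hypothesis on $DG$ is not substantiated (that hypothesis is used in Theorem~\ref{conj-thm} to invoke Richardson's result, not to control Zariski closures of irreducible subgroups). The alternative sketch, arguing that the reducible locus is proper in $L^r$ via irreducible components, breaks down because $L=C_G(x)$ need not be connected, so $L^r$ need not be an irreducible variety; and even restricting to a single component $C\subset L^r$, containment $C\subseteq H_P=\bigcup_{g}(gPg^{-1})^r$ does not obviously force $C$ into a fixed conjugate $(gPg^{-1})^r$, which is what you would need to contradict irreducibility of $H$.

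A cleaner line is available for part of the problem. Since $H$ is irreducible, its Zariski closure $\overline{H}$ is reductive (an irreducible subgroup normalizes no nontrivial unipotent subgroup, so Borel--Tits forbids a nontrivial unipotent radical). If $\overline{H}$ happens to be connected, then two generic elements generate a Zariski-dense subgroup (cf.\ the footnote in the proof of Theorem~\ref{gencodim}); this $2$-generated subgroup is irreducible in $G$ and is centralized by $x$, so you are done for every $r\geqs 2$. The residual case is when $\overline{H}$ is disconnected and $\overline{H}^0$ is \emph{not} itself irreducible in $G$ (which can happen, e.g.\ when $H$ is finite). Here one needs an additional argument---for instance, a compactness argument on the projective varieties $G/P$ to pass from ``every finite subset of $\overline{H}$ lies in some proper parabolic'' to ``$\overline{H}$ lies in a proper parabolic,'' combined with a reduction showing that $r$-tuples in $\overline{H}$ already witness arbitrary finite subsets. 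You should either supply these details or restrict the claim to the connected case.
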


It remains to deal with groups $G$ whose derived subgroup has rank 1 factors.  Given the classification of singularities in $\XC{r}(\SLm{2})$ by \cite{FlLa2}, and in $\XC{r}(\PSLm{2})$ by \cite{HP}, we expect the above theorem to extend directly to $r\geqs3$ and $DG$ having rank 1 simple factors.

 As a step in this direction,  we reduce Conjecture \ref{red-conj} to the case when $G$ is semisimple and simply connected, leading to a proof of the conjecture for $G = \PSLm{2}$.  We need two lemmas.

\begin{lem}\label{finitequotientsing}
Let $G$ be a connected, reductive $\C$--group and let $Z < Z(G)$  be  a finite central subgroup.  Denote the quotient map $G\to G/Z$ by $\pi$, and consider the induced map $\pi_* \co \XC{r}(G) \to \XC{r} (G/Z)$.  If $\XC{r}(G)^{red}\subset \XC{r}(G)^{sing}$, then $\pi_* (\XC{r} (G)^{red}) \subset \XC{r} (G/Z)^{sing}$.
\end{lem}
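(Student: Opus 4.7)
The plan is to realize $\pi_*$ as a finite quotient by a natural $Z^r$-action on $\XC{r}(G)$ and then compare \'etale local models of the two character varieties using Luna's slice theorem.

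First, since $Z$ is finite, the homomorphism $\pi\co G\to G/Z$ is \'etale, and the induced map $\pi^*\co \hom(\F_r,G)\to\hom(\F_r,G/Z)$ is a finite \'etale Galois cover with deck group $Z^r$, where $z=(z_1,\ldots,z_r)$ acts by $(\rho\cdot z)(e_i)=\rho(e_i)z_i$. Because $Z$ is central this action commutes with $G$-conjugation and descends to $\XC{r}(G)$; passing to GIT quotients yields $\XC{r}(G/Z)\cong \XC{r}(G)/Z^r$, with $\pi_*$ the quotient map.

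Next, I would fix a polystable representative $\rho$ of a given $[\rho]\in\XC{r}(G)^{red}$, set $\bar\rho=\pi\circ\rho$, and let $\Gamma\leqs Z^r$ denote the $Z^r$-stabilizer of $[\rho]$. Writing $H=\mathrm{Stab}_G(\rho)/Z(G)$ and $H'=\mathrm{Stab}_{G/Z}(\bar\rho)/Z(G/Z)$, a direct calculation comparing lifts under $\pi$ produces a short exact sequence $1\to H\to H'\to \Gamma\to 1$ in which $H$ and $H'$ share the same Lie algebra, and hence the same identity component, because $\pi$ has finite kernel $Z$. Luna's \'etale slice theorem, applied at $\rho$ and at $\bar\rho$, then furnishes a common linear $H'$-representation $N$ (the normal slices at $\rho$ and $\bar\rho$ are identified via $\pi^*$) together with \'etale local isomorphisms $\XC{r}(G)\cong N\aq H$ near $[\rho]$ and $\XC{r}(G/Z)\cong N\aq H'\cong (N\aq H)/\Gamma$ near $[\bar\rho]$.

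Suppose, for contradiction, that $[\bar\rho]\in\XC{r}(G/Z)^{sm}$; then $(N\aq H)/\Gamma$ is smooth at the origin, while the hypothesis of the lemma forces $N\aq H$ to be singular at the origin. I would derive a contradiction from these two local statements by exploiting that $\Gamma$ is realized as a finite subgroup of $H'$ acting linearly on the smooth vector space $N$, so the passage from $N\aq H$ to $(N\aq H)/\Gamma$ is governed by the larger reductive GIT quotient $N\aq H'$ of a linear representation, rather than by an abstract finite-group action on the singular base $N\aq H$. I expect the main obstacle to be exactly this last deduction: in general a finite quotient can smooth out a singularity (for instance, the cuspidal cubic $\{y^2=x^3\}$ quotiented by $y\mapsto -y$ becomes $\mathbb{A}^1$), so the argument has to use more than just finiteness of $\Gamma$. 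The essential structural inputs are the linearity of the $\Gamma$-action on the smooth ambient $N$, the coincidence of the identity components of $H$ and $H'$, and Chevalley--Shephard--Todd-type considerations carried out directly on $N$ rather than on $N\aq H$.
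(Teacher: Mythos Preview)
Your setup is correct and matches the paper's first step: $\pi_*$ is indeed the quotient by the natural $Z^r$--action, so $\XC{r}(G/Z)\cong\XC{r}(G)/Z^r$. But from there your plan diverges from the paper and runs into a real gap. You try to argue \emph{pointwise} via Luna slices, reducing to the claim that if $N\aq H$ is singular at the origin then so is $N\aq H'\cong(N\aq H)/\Gamma$. You yourself flag that a finite quotient can smooth a singularity, and the ingredients you list (linearity of the $\Gamma$--action on $N$, equality of identity components, Chevalley--Shephard--Todd) do not add up to a proof: CST characterizes smoothness of $N/\Gamma$ for finite $\Gamma$ acting on \emph{smooth} $N$, not smoothness of $(N\aq H)/\Gamma$, and passing to the common identity component $H_0$ only rewrites both sides as finite quotients of $N\aq H_0$ without deciding anything. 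So as written, the crucial deduction is missing and it is not clear it can be completed along these lines.

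The paper sidesteps this local difficulty entirely with a generic--plus--closure argument. It observes that each irreducible component $H_P\aq G$ of $\XC{r}(G)^{red}$ contains the trivial representation, at which the $Z^r$--action is free; since the non-free locus for a finite group action is Zariski closed, $Z^r$ acts freely on a dense open subset $U$ of $\XC{r}(G)^{red}$. On $U$ the quotient map $\XC{r}(G)\to\XC{r}(G)/Z^r$ is \'etale, so it preserves singularities, and hence $\pi_*(U)\subset\XC{r}(G/Z)^{sing}$. Finally, $\overline{U}=\XC{r}(G)^{red}$ and the singular locus is closed, so continuity gives $\pi_*(\XC{r}(G)^{red})\subset\XC{r}(G/Z)^{sing}$. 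This avoids any comparison of slice quotients at points with nontrivial $Z^r$--stabilizer, which is exactly where your approach stalls.
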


\begin{proof}
By Lemma~\ref{H_P}, the sets $H_P:=\cup_{g\in G}\hom(\F_r,gPg^{-1})$, with $P\leqs G$ a maximal proper parabolic subgroup, are exactly the irreducible components of $\XC{r}(G)^{red}$.  
Note that since the identity is in $P$ for every parabolic subgroup $P < G$, the trivial representation is in each irreducible component $H_P\aq G$.  Moreover, the left multiplication action of $Z^r$ on $\XC{r}(G)$ is free at the trivial representation.  Note that the set of representations where $Z^r$ does not act freely is the union of the Zariski closed sets determined by the algebraic equations $z[\rho]=[\rho]$, and thus is a Zariski closed set (recall that $Z$ is finite).  Since in every component there is a point where the action is free (the trivial representation) the action of $Z^r$ on $\XC{r}(G)^{red}$ is free on a non-empty (dense) Zariski open set in each component.  Therefore, the quotient mapping $\XC{r}(G)\to \XC{r}(G)/Z^r\cong \XC{r}(G/Z)$ is generically \'etale on the restriction to $\XC{r}(G)^{red}$; that is, there is an open dense set $U\subset \XC{r}(G)^{red}$ for which the mapping is \'etale.  Therefore, every $[\rho]\in U$ (necessarily singular by assumption) is mapped to a singular point.  Since the quotient map is continuous, the closure of $U$ maps into the closure of the image of $U$.  However, the closure of $U$ is $\XC{r}(G)^{red}$ and the closure of the image of $U$ is contained in the singular locus (since the singular locus is a closed set).
\end{proof}

\begin{lem} \label{red-lem} Let $G$ be a connected, reductive $\C$--group and let $Z < Z(G)$  be  a Zariski closed, central subgroup.  Denote the quotient map $G\to G/Z$ by $\pi$, and consider the induced map $\pi_* \co \XC{r}(G) \to \XC{r} (G/Z)$.  Then
$\pi_* (\XC{r} (G)^{red}) = \XC{r} (G/Z)^{red}$.
\end{lem}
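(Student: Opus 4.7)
The plan is to prove the two inclusions separately, after a preliminary step identifying parabolic and Levi subgroups of $G$ with those of $G/Z$.

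\emph{Preliminary.} Since $Z<Z(G)$, every parabolic subgroup of $G$ contains $Z$, so $P\mapsto P/Z$ is a bijection between proper parabolic subgroups of $G$ and those of $G/Z$. Moreover, $Z$ is a Zariski closed subgroup of $Z(G)$, which is of multiplicative type (diagonalizable) in a connected reductive $\C$--group; hence $Z$ is itself reductive and meets the unipotent radical of every parabolic trivially. Consequently $\pi$ restricts to an isomorphism $R_u(P)\stackrel{\isom}{\maps} R_u(P/Z)$ and carries Levi decompositions to Levi decompositions, so Levi subgroups also correspond via $L\leftrightarrow L/Z$.

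\emph{Forward inclusion.} Take $[\rho]\in\XC{r}(G)^{red}$. By the polystability/complete reducibility equivalence \cite[Theorem 30]{Si4}, I can choose the representative $\rho$ with $\rho(\F_r)\subset L$ for some Levi $L$ of a proper parabolic $P<G$. Then $(\pi\circ\rho)(\F_r)\subset L/Z$, a Levi of the proper parabolic $P/Z<G/Z$, so $\pi\circ\rho$ is completely reducible (hence polystable) and reducible; thus $\pi_*([\rho])=[\pi\circ\rho]\in\XC{r}(G/Z)^{red}$.

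\emph{Reverse inclusion.} Given $[\bar\rho]\in\XC{r}(G/Z)^{red}$, choose a completely reducible representative $\bar\rho$ with image in a Levi $\bar L$ of a proper parabolic $\bar P<G/Z$, and set $L:=\pi^{-1}(\bar L)\leqs P:=\pi^{-1}(\bar P)$. Using freeness of $\F_r$, lift each generator by picking any $g_i\in L$ with $\pi(g_i)=\bar\rho(e_i)$, and define $\rho(e_i):=g_i$. Then $\rho(\F_r)\subset L$ is contained in a Levi of the proper parabolic $P\lneq G$, so $\rho$ is completely reducible (hence polystable) and reducible. Therefore $[\rho]\in\XC{r}(G)^{red}$ and $\pi_*([\rho])=[\bar\rho]$ by construction.

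The main obstacle is the preliminary correspondence: the bijection of parabolics is formal, but to transport ``image contained in a Levi of a proper parabolic'' faithfully across $\pi$ in both directions one must check that Levi subgroups correspond, which rests on the reductivity of $Z$ and its trivial intersection with unipotent radicals. Once that is in hand, freeness of $\F_r$ makes the lifting step trivial and the Sikora polystability criterion keeps everything in the GIT quotients.
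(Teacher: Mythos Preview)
Your proof is correct, but the route differs from the paper's in two ways.

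First, the paper does not take the parabolic bijection as given: it proves it from scratch using the characterization ``$P$ is parabolic iff $P$ is closed and $G/P$ is projective,'' by establishing an isomorphism of varieties $G/P \cong (G/Z)/(P/Z)$ via the universal property of quotients. You instead assert the bijection $P\leftrightarrow P/Z$ as known. That is legitimate (it follows, e.g., from the fact that Borels map to Borels under surjections of connected algebraic groups), but calling it ``formal'' undersells it; the sentence ``every parabolic contains $Z$'' only shows the map is well-defined on closed subgroups, not that it preserves parabolicity. A one-line citation or the Borel argument would close this.

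Second, the paper works purely at the level of representations: once $\rho$ reducible $\Leftrightarrow$ $\pi\circ\rho$ reducible is established, the lemma follows because reducibility is well-defined on GIT classes (the reducible locus is closed and $G$-invariant) and $\hom(\F_r,G)\to\hom(\F_r,G/Z)$ is surjective by freeness. You instead set up a Levi correspondence and track completely reducible representatives through $\pi_*$, ensuring polystability is preserved on both sides. This is more than is strictly needed, but it has the virtue of making explicit why $\pi_*([\rho])=[\pi\circ\rho]$ holds as an equality of closed orbits, which the paper leaves implicit. Your unipotent-radical argument for the Levi correspondence is correct.
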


\begin{proof}  It will suffice to show that a representation $\rho\in\hom(\F_r, G)$ is reducible if and only if $\pi\circ \rho \in \hom(\F_r, G/Z)$ is reducible, or equivalently that a subgroup $P<G$, with $Z < P$, is parabolic if and only if $\pi(P) = P/Z$ is parabolic in $G/Z$.  Recall that a subgroup $Q<H$ is parabolic ($H$ a reductive $\C$--group) if and only if $Q$ is Zariski closed in $H$ and $H/Q$ is projective.  

First, note that a subgroup $Q<G$, with $Z<Q$, is Zariski closed if and only if $\pi(Q)=Q/Z<G/Z$ is Zariski closed.  This follows since $\pi$ is the GIT quotient map, and $Q$ is a $Z$--space (see \cite[Proposition 6.2]{Do}).

Now, say $Z< P < G$ and $P$ is Zariski closed.  We claim that there is an isomorphism of algebraic varieties $G/P \isom (G/Z)/(P/Z)$.  
We will appeal to the universal property of the quotient map $p\co H\to H/Q$, where $H$ is an algebraic $\C$--group and $Q<H$ is a Zariski closed subgroup: as shown in \cite{Hum}, this map has the property that if $f\co H\to X$ is an algebraic map whose fibers $f^{-1} (x)$ are all unions of cosets of $Q$, then there exists a unique algebraic map $\overline{f}\co H/Q\to X$ such that $f = \overline{f} \circ p$.  
Let $q\co G/Z \to (G/Z)/(P/Z)$ be the quotient map (which exists, and has the above universal property, since $P/Z < G/Z$ is Zariski closed).
It now suffices to check that the composite map $G\xmaps{\pi} G/Z \xmaps{q} (G/Z)/(P/Z)$ satisfies the universal property of the quotient map $G\to G/P$.  Given a map $f\co G\to X$  whose fibers are all unions of cosets of $P$, the fibers are also unions of cosets of $Z$, so $f$ factors as $f = \overline{f} \circ \pi$.  But now for any $x\in X$, we have
$f^{-1} (x) = \pi^{-1} (\overline{f}^{-1} (x)) = \bigcup_{g\in I} gP$ for some subset $I \subset G$, and hence
$$\overline{f}^{-1} (x) = \pi \left( \pi^{-1} (\overline{f}^{-1} (x)) \right) = \pi \left(  \bigcup_{g\in I} gP \right) = \bigcup_{g\in I} (gZ) P/Z,$$
so the fibers of $\overline{f}$ are unions of cosets of $P/Z$.  Hence $\overline{f}$ factors through the quotient map $q\co G/Z \to (G/Z)/(P/Z)$, yielding the desired factorization of $f$.  Uniqueness of the factorization is immediate.

Combining the previous two paragraphs, we see that if  $Z<P<G$, then $P$ is parabolic if and only if $\pi(P) = P/Z$ is parabolic in $G/Z$, and this completes the proof that $\pi_* (\XC{r} (G)^{red}) = \XC{r} (G/Z)^{red}$.
\end{proof}

\begin{thm}\label{redsingthm}
Let $G$ be a connected, reductive $\C$--group, and let $\wt{DG}$ be the universal cover of the derived subgroup $DG$.  If $\XC{r}(\wt{DG})^{red}\subset \XC{r}(\wt{DG})^{sing}$, then  $\XC{r}(G)^{red}\subset \XC{r}(G)^{sing}.$
\end{thm}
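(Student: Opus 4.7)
The plan is to prove this by a two-step reduction: first reduce from $G$ to its semisimple derived subgroup $DG$, and then reduce from $DG$ to its universal cover $\wt{DG}$. Both steps are essentially bookkeeping built on results already established in the paper.

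First, I would use Proposition~\ref{etale} to identify $\XC{r}(G)^{sing} \cong \XC{r}(DG)^{sing}\times_{F^r} T^r$, and Corollary~\ref{P-cor} to identify $\XC{r}(G)^{red} \cong \XC{r}(DG)^{red}\times_{F^r} T^r$, where $T$ is a maximal central torus and $F = T\cap DG$. Since the free quotient by $F^r$ and the product with $T^r$ is applied to both sides compatibly, the containment $\XC{r}(G)^{red}\subset \XC{r}(G)^{sing}$ is equivalent to $\XC{r}(DG)^{red}\subset \XC{r}(DG)^{sing}$. This reduces the problem to the semisimple case.

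For the second step, note that since $DG$ is semisimple its fundamental group is finite (by Weyl's theorem), so the covering map $\pi\co \wt{DG}\to DG$ has kernel a finite central subgroup $Z\leqs Z(\wt{DG})$, and $DG \cong \wt{DG}/Z$. By hypothesis $\XC{r}(\wt{DG})^{red}\subset \XC{r}(\wt{DG})^{sing}$, so Lemma~\ref{finitequotientsing} (applied to the connected reductive $\C$--group $\wt{DG}$ and the finite central subgroup $Z$) yields
\[
\pi_*\bigl(\XC{r}(\wt{DG})^{red}\bigr)\subset \XC{r}(DG)^{sing}.
\]
On the other hand, Lemma~\ref{red-lem} identifies the left-hand side with $\XC{r}(DG)^{red}$, giving $\XC{r}(DG)^{red}\subset \XC{r}(DG)^{sing}$. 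Combined with the first reduction, this produces the desired containment in $\XC{r}(G)$.

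There is no serious obstacle: the proof is just a careful combination of Proposition~\ref{etale}, Corollary~\ref{P-cor}, and Lemmas~\ref{finitequotientsing} and~\ref{red-lem}. The only point worth double-checking is that $\wt{DG}$ is still a connected reductive $\C$--group (so that the previous lemmas apply), which holds because simply connected semisimple complex Lie groups are reductive affine algebraic groups, and that the kernel $Z$ of $\wt{DG}\to DG$ is indeed finite and central, which follows from semisimplicity of $DG$.
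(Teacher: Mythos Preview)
Your proposal is correct and follows essentially the same approach as the paper's proof, using the same four ingredients (Proposition~\ref{etale}, Corollary~\ref{P-cor}, Lemma~\ref{finitequotientsing}, and Lemma~\ref{red-lem}) in the same logical order. The only cosmetic difference is that you argue at the level of set containments while the paper tracks a single point $[\rho]\in\XC{r}(G)^{red}$ through the reductions.
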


\begin{proof}
Let $[\rho]\in\XC{r}(G)^{red}$.  By Corollary \ref{P-cor}, there exist $[\rho']\in \X_r (DG)^{red}$ and $\chi \in \X_r (T) = T^r$ such that 
$$[\rho] = [([\rho'], \chi)] \in \X_r (DG)^{red} \times_{F^r} T^r \isom \X_r (G)^{red}.$$
Since the kernel of the universal covering homomorphism $\pi \co \wt{DG} \to DG$ is a finite central subgroup of $\wt{DG}$, by Lemma \ref{red-lem} there exists $[\wt{\rho'}]\in \XC{r}(\wt{DG})^{red}$ such that $\pi_* [\wt{\rho'}] = [\rho']$.  By assumption $[\rho']$ is singular, and by Lemma \ref{finitequotientsing} $\pi_* [\wt{\rho'}] = [\rho']$ is also singular.  By Proposition \ref{etale}, we conclude $[\rho]$ is singular.  
\end{proof}

By the work of \cite{FlLa2}, we know that all reducibles in $\XC{r} (\SLm{n})$ are singular for $(r-1)(n-1)\geqs 2$.  Thus, the above theorem immediately implies:

\begin{cor}\label{pslconj}
$\XC{r}(\mathsf{PSL}_n)^{red}\subset \XC{r}(\mathsf{PSL}_n)^{sing}$ for $(r-1)(n-1)\geqs 2$.
\end{cor}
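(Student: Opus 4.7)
The plan is to invoke Theorem \ref{redsingthm} with $G = \mathsf{PSL}_n$. First I would identify the relevant data: since $\mathsf{PSL}_n$ is semisimple (indeed simple for $n\geqs 2$), we have $D(\mathsf{PSL}_n) = \mathsf{PSL}_n$. The universal cover of $\mathsf{PSL}_n$ is $\SLm{n}$, with covering homomorphism the quotient by the center $Z(\SLm{n}) \cong \mathbb{Z}/n\mathbb{Z}$. Thus $\wt{D(\mathsf{PSL}_n)} = \SLm{n}$.

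Next I would invoke the base case for $\SLm{n}$: the already-cited results of \cite{FlLa2} (mentioned at the start of Section \ref{sing-sec}) establish that under the hypothesis $(r-1)(n-1)\geqs 2$, one has $\XC{r}(\SLm{n})^{sm} = \XC{r}(\SLm{n})^{good} = \XC{r}(\SLm{n})^{irr}$. The equality of the smooth locus with the irreducible locus is precisely the assertion that every reducible class is singular, i.e.\ $\XC{r}(\SLm{n})^{red} \subset \XC{r}(\SLm{n})^{sing}$. This is exactly the hypothesis of Theorem \ref{redsingthm} applied to $G = \mathsf{PSL}_n$, so the theorem yields $\XC{r}(\mathsf{PSL}_n)^{red} \subset \XC{r}(\mathsf{PSL}_n)^{sing}$, as desired.

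There is no genuine obstacle here; the corollary is a direct specialization of the reduction Theorem \ref{redsingthm} to the case where the universal cover of the derived subgroup is $\SLm{n}$, combined with the $\SLm{n}$-result of \cite{FlLa2}. All the substantive work is packaged into Lemmas \ref{finitequotientsing} and \ref{red-lem} and the \'etale equivalence from Proposition \ref{etale}.
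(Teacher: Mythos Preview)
Your proposal is correct and matches the paper's own argument essentially verbatim: the paper simply observes that by \cite{FlLa2} all reducibles in $\XC{r}(\SLm{n})$ are singular for $(r-1)(n-1)\geqs 2$, and then invokes Theorem~\ref{redsingthm} with $G=\mathsf{PSL}_n$ (so $\wt{DG}=\SLm{n}$). There is nothing to add.
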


Note that $\mathsf{PSL}_n$ is simple of rank $n-1$.  Hence when $n=2$ we have an example, not covered by Theorem \ref{conj-thm}, in which the above conjecture holds.

\section{Poincar\'e polynomials for $n=2$} \label{PP}

In this section, we show how Tom Baird's computation (see \cite{Ba0})
of the Poincar\'e polynomials of $\XC{r}(\SUm{2})=\SUm{2}^{r}/\SUm{2}$
implies immediately that $\XC{r}(\SUm{2})$ is not a topological manifold
(locally homeomorphic to $\mathbb{R}^{3r-3}$) for $r\geqs 4$. In fact,
we wish to establish that it is also not a topological manifold with
boundary.

To simplify the presentation, consider the following polynomials in
the variable $t$:\begin{eqnarray*}
f_{r}(t) & = & \frac{1}{2}\left[(1+t)^{r}(1+t^{2})-(1-t)^{r}(1-t^{2})\right]\\
h_{r}(t) & = & (1+t^{3})^{r}.\end{eqnarray*}

\begin{prop}[T. Baird, 2008] The Poincar\'e polynomials of $\XC{r}(\SUm{2})$
are:\[
P_{t}(\XC{r}(\SUm{2}))=1+t+\frac{t\ Q(t)}{1-t^{4}},\qquad\mbox{where }Q(t)=t^{2} f_{r}(t)-h_{r}(t).\]

\end{prop}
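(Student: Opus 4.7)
The plan is to compute $H^*(\XC{r}(\SUm{2}); \mathbb{Q})$ via rational $K$-equivariant cohomology, where $K = \SUm{2}$ acts on $\hom(\F_r, K) = K^r$ by conjugation, and then to reassemble the answer from the stratification $K^r = K^r_{red} \sqcup K^r_{irr}$. Three equivariant ingredients drive the calculation. First, the conjugation action of $K$ on $K^r$ is rationally equivariantly formal, so the Leray--Serre spectral sequence for the Borel fibration $K^r \hookrightarrow K^r \times_K EK \to BK$ collapses, giving
$$P_t^K(K^r) \;=\; P_t(K^r)\, P_t(BK) \;=\; \frac{(1+t^3)^r}{1-t^4} \;=\; \frac{h_r(t)}{1-t^4}.$$
Second, writing $K^r_{red} = K \cdot T^r = K \times_{N(T)} T^r$ for a maximal torus $T \cong U(1)$, equivariant induction gives $P_t^K(K^r_{red}) = P_t\bigl(H^*_T(T^r)^W\bigr)$; since $T$ acts trivially on $T^r$ and the Weyl group $W = \Z/2$ acts by inversion on $T$ and by negation on the degree-$2$ generator of $H^*(BT)$, the Weyl-invariant Poincar\'e series is
$$P_t^K(K^r_{red}) \;=\; \frac{(1+t)^r(1+t^2) + (1-t)^r(1-t^2)}{2(1-t^4)}.$$
Third, every irreducible representation has $K$-stabilizer $Z(K) = \{\pm I\}$, and since $H^*(BZ(K); \mathbb{Q})$ vanishes in positive degree, the Borel fibration gives $H^*_K(K^r_{irr}; \mathbb{Q}) \cong H^*(\XC{r}(\SUm{2})^{irr}; \mathbb{Q})$.

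Next, I would apply the equivariant Thom--Gysin long exact sequence for the pair $(K^r, K^r_{red})$ to extract $P_t^K(K^r_{irr})$. The reducible locus has real codimension $2(r-1)$ in $K^r$, and its equivariant normal bundle carries a sign twist under the $W$-action, so the Thom isomorphism converts $W$-invariants into $W$-anti-invariants. The anti-invariant Poincar\'e series of $H^*_T(T^r)$ equals $f_r(t)/(1-t^4)$, which is precisely how $f_r$ enters the formula. Combining this with a Mayer--Vietoris sequence for the open cover $\XC{r}(\SUm{2}) = N(X^{red}) \cup X^{irr}$, where $N(X^{red})$ is a tubular neighborhood retracting onto $X^{red} = T^r/W$ of Poincar\'e polynomial $[(1+t)^r + (1-t)^r]/2$, these pieces assemble into the claimed formula after an elementary algebraic rearrangement that produces $Q(t) = t^2 f_r(t) - h_r(t)$ and the prefactor $1+t$.

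The principal technical obstacle is the singularity of $K^r_{red}$ at the $2^r$ central representations, where the stabilizer jumps from $T$ up to all of $K$; this invalidates a naive application of the equivariant Thom isomorphism, and also complicates the fiber of the Borel construction over these points (which is $BK$ rather than $BT$ or $B Z(K)$). I would handle this by combining the Luna slice theorem --- to identify the local model of $\XC{r}(\SUm{2})$ at each central point as an affine cone on a homogeneous variety, whose rational cohomology is directly computable --- with a refined Mayer--Vietoris decomposition that separately accounts for the central stratum, the open reducible stratum with stabilizer $T$, and the irreducible open set. Verifying that the degenerations in the Thom--Gysin sequence are consistent with the claimed formula, equivalently that all corrections from the central stratum are absorbed into the low-degree terms $1+t$ of the prefactor and $-h_r(t)$ inside $Q(t)$, is the step where I expect the bulk of the work.
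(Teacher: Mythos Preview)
The paper does not prove this proposition at all: it is stated as a citation of Baird's thesis result (the attribution ``T.~Baird, 2008'' and the reference~\cite{Ba0}), and the surrounding text only manipulates the formula algebraically (Lemma~\ref{Baird}) to verify that $P_t$ is a genuine polynomial and to read off specific Betti numbers. So there is no proof in the paper to compare against.

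Your sketch is, in outline, Baird's own approach: equivariant formality of the conjugation action on $K^r$, computation of $H^*_K(K^r_{red};\mathbb{Q})$ by induction from the normalizer of a maximal torus, and an equivariant Thom--Gysin sequence across the stratification to isolate $H^*_K(K^r_{irr};\mathbb{Q}) \cong H^*(\XC{r}(\SUm{2})^{irr};\mathbb{Q})$. Two points deserve care. First, the Mayer--Vietoris step you describe for the quotient is more delicate than you indicate: a tubular neighborhood of $\XC{r}(\SUm{2})^{red}$ in the singular space $\XC{r}(\SUm{2})$ does not obviously exist, so it is cleaner to stay entirely in equivariant cohomology and use the fact that $\XC{r}(\SUm{2})$, being the quotient of a manifold by a compact group, satisfies $H^*(\XC{r}(\SUm{2});\mathbb{Q}) \cong H^*_K(K^r;\mathbb{Q})/(\text{torsion over } H^*(BK))$ --- but this is not quite right either, since equivariant formality tells you $H^*_K(K^r)$ is free over $H^*(BK)$, and the quotient cohomology is \emph{not} simply obtained by setting the $BK$ generator to zero. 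The actual passage from $H^*_K$ of the pieces to $H^*$ of the quotient uses that on $K^r_{irr}$ the stabilizers are finite (so equivariant equals ordinary on the quotient there) together with a separate identification of $H^*(\XC{r}(\SUm{2})^{red};\mathbb{Q}) = H^*(T^r/W;\mathbb{Q})$; gluing these requires the long exact sequence of the pair in ordinary cohomology of the quotient, not Mayer--Vietoris. Second, your worry about the central $2^r$ points is real for the Thom--Gysin step upstairs, but Baird handles this by noting that the reducible stratum, while not a submanifold, still supports an equivariant Gysin map because its singularities are mild enough (the stratum is the image of the smooth map $K\times_{N(T)} T^r \to K^r$, and one works with this resolution). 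Your Luna-slice plan would also work but is more labor-intensive than necessary.
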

As particular cases, one can easily compute that, for $r=1,2,3$ and
$4$, we have $P_{t}(\XC{r}(\SUm{2}))=1,1,1+t^{6}$ and $1+4t^{6}+t^{9}$,
respectively.

Let us first check that $P_{t}$ is indeed a polynomial in $t$ with
non-negative integer coefficients. This follows from an alternative
way to write $P_{t}$ which will be useful later. Denote by $\binom{r}{k}$
the binomial coefficient, with the convention that $\binom{r}{k}=0$
for $r<k$.

\begin{lem}\label{Baird}
We have\[
P_{t}(\XC{r}(\SUm{2}))=1+a(t)+b(t),\]
 where $a,b$ are given by the finite series \begin{equation}
a(t)=\sum_{k\geqs 1} \binom{r}{2k+1}t^{2k+4}\frac{1-t^{4k}}{1-t^{4}}\label{eq:a}\end{equation}
 and \begin{equation}
b(t)=\sum_{k\geqs 1} \binom{r}{2k+2}t^{2k+7}\frac{1-t^{4k}}{1-t^{4}},\label{eq:b}\end{equation}
 where \[
\frac{1-t^{4k}}{1-t^{4}}=1+t^{4}+t^{8}+\cdots+t^{4k-4}.\]

\end{lem}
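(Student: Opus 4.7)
The plan is to verify the claimed equality by direct algebraic manipulation of the stated closed form $P_t(\XC{r}(\SUm{2})) = 1 + t + \frac{tQ(t)}{1-t^4}$ with $Q(t) = t^2 f_r(t) - h_r(t)$. After subtracting $1$ from both sides and clearing the denominator $1-t^4$, the identity to prove becomes
\[
tQ(t) + t(1-t^4) \;=\; (1-t^4)\bigl(a(t)+b(t)\bigr),
\]
so the proof reduces to computing the polynomial on each side and comparing coefficients.

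For the left side, I would first split $f_r$ by parity using the binomial theorem: since $\frac{1}{2}[(1+t)^r-(1-t)^r] = \sum_{k\geqs 0}\binom{r}{2k+1}t^{2k+1}$ and $\frac{1}{2}[(1+t)^r+(1-t)^r] = \sum_{k\geqs 0}\binom{r}{2k}t^{2k}$, one gets
\[
t^3 f_r(t) = \sum_{m\text{ odd},\,m\geqs 1} \binom{r}{m} t^{m+3} + \sum_{n\text{ even},\,n\geqs 0} \binom{r}{n} t^{n+5}.
\]
Expanding $t\,h_r(t) = t(1+t^3)^r$ and again splitting by parity of the exponent yields $t\,h_r(t) = \sum_{m\text{ odd},\,m\geqs 1} \binom{r}{m}t^{3m+1} + \sum_{n\text{ even},\,n\geqs 0} \binom{r}{n}t^{3n+1}$. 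Subtracting gives
\[
tQ(t) = \sum_{m\text{ odd},\,m\geqs 1}\binom{r}{m}\!\left(t^{m+3}-t^{3m+1}\right) + \sum_{n\text{ even},\,n\geqs 0}\binom{r}{n}\!\left(t^{n+5}-t^{3n+1}\right).
\]

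For the right side, the key observation is that $\frac{1-t^{4k}}{1-t^4}$ multiplied by $(1-t^4)$ simply gives $1-t^{4k}$, so with the substitutions $m=2k+1$ (odd, $m\geqs 3$) in $a(t)$ and $n=2k+2$ (even, $n\geqs 4$) in $b(t)$, one gets $2k+4 = m+3$, $6k+4 = 3m+1$ and $2k+7 = n+5$, $6k+7 = 3n+1$, yielding
\[
(1-t^4)\bigl(a(t)+b(t)\bigr) = \!\!\sum_{m\text{ odd},\,m\geqs 3}\!\binom{r}{m}(t^{m+3}-t^{3m+1}) + \!\!\sum_{n\text{ even},\,n\geqs 4}\!\binom{r}{n}(t^{n+5}-t^{3n+1}).
\]

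The only step requiring a small check (and likely the only place with any friction) is reconciling the boundary index terms omitted from $a,b$: in $tQ(t)$ the $m=1$ contribution is $\binom{r}{1}(t^4 - t^4) = 0$ and the $n=2$ contribution is $\binom{r}{2}(t^7 - t^7) = 0$, both of which vanish identically. The $n=0$ contribution is $t^5-t$, which is cancelled precisely by the added $t(1-t^4) = t - t^5$ on the left. After these cancellations the two expressions match term by term, establishing the identity.
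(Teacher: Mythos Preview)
Your proof is correct and follows essentially the same route as the paper: both expand $f_r$ and $h_r$ via the binomial theorem, split by parity, and verify the identity $tQ(t) = (1-t^4)\bigl(-t + a(t) + b(t)\bigr)$, which is exactly your equation $tQ(t) + t(1-t^4) = (1-t^4)(a(t)+b(t))$ rearranged. Your treatment of the boundary terms $m=1$, $n=0$, $n=2$ is a bit more explicit than the paper's, but the argument is the same.
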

\begin{proof}
We can write $f_{r}(t)=rt+\binom{r}{3}t^{3}+\binom{r}{5}t^{5}+\cdots+t^{2}+\binom{r}{2}t^{4}+\binom{r}{4}t^{6}+\cdots$
and $h_{r}(t)=1+\binom{r}{3}t^{3}+\binom{r}{6}t^{6}+\cdots+t^{3r}$
so that\begin{eqnarray*}
Q(t)=t^{2}f_{r}(t)-h_{r}(t) & = & -1+t^{4}+\binom{r}{3}(t^{5}-t^{9})+\binom{r}{5}(t^{7}-t^{15})+\cdots\\
 &  & \cdots+\binom{r}{4}(t^{8}-t^{12})+\binom{r}{6}(t^{10}-t^{18})+\cdots.\end{eqnarray*}
 Therefore, we get\begin{eqnarray*}
t\ Q(t) & = & -t(1-t^{4})+\binom{r}{3}t^{6}(1-t^{4})+\binom{r}{5}t^{8}(1-t^{8})+\cdots\\
 &  & \cdots+\binom{r}{4}t^{9}(1-t^{4})+\binom{r}{6}t^{11}(1-t^{8})+\cdots\\
 & = & (1-t^{4})(-t+a(t)+b(t))\end{eqnarray*}
 which proves the desired formula. 
\end{proof}
Since $\SUm{2}$ is a compact connected Lie group, the orbit space
$\XC{r}(\SUm{2})=\SUm{2}^{r}/\SUm{2}$ is also a compact connected
topological space, with the natural quotient topology. Observe
that the degree of $P_{t}(\XC{r}(\SUm{2}))=1+t+\frac{t\ Q}{1-t^{4}}$
is given by (for $r\geqs2$) \[
N=\deg Q-3=\max\{\deg f_{r}+2,\deg h_{r}\}-3=3r-3,\]
because the degree of $f_{r}$ is $r+2$ and the degree of $h_{r}$
is $3r$.

\begin{lem}
\label{lem:TopCoef}Let $r\geqs3$ and $N=3r-3$. $(a)$ The Poincar\'e
polynomial $P_{t}$ of $\XC{r}(\SUm{2})$ has degree $N$ and its
top coefficient is $1$. $(b)$ If $\XC{r}(\SUm{2})$ is a manifold
with boundary, then its dimension is $N=3r-3$. 
\end{lem}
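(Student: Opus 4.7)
The plan is to read (a) off the explicit series from Lemma \ref{Baird} by a parity-of-$r$ analysis, and then to deduce (b) by combining (a) with the principal bundle structure on the good locus from Lemma \ref{lem22BL} and invariance of domain.

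For (a), I would analyze the degrees of the summands of $a(t)$ and $b(t)$ term by term: the $k$-th summand of $a$ is $\binom{r}{2k+1}t^{2k+4}(1+t^4+\cdots+t^{4k-4})$, of degree $6k$, while the $k$-th summand of $b$ is $\binom{r}{2k+2}t^{2k+7}(1+t^4+\cdots+t^{4k-4})$, of degree $6k+3$. I would then split on the parity of $r$. If $r=2m+1$ with $m\geqs 1$, only $a$ can reach degree $3r-3=6m$ (at $k=m$; the largest admissible $k$ in $b$ is $m-1$, contributing degree $6m-3$), and the top coefficient equals $\binom{r}{r}=1$. If $r=2m$ with $m\geqs 2$, only $b$ reaches degree $3r-3=6m-3$ (at $k=m-1$), again with top coefficient $\binom{r}{r}=1$. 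In either case $P_t$ has degree $N=3r-3$ with leading coefficient $1$.

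For (b), suppose $\XC{r}(\SUm{2})$ is a compact topological manifold-with-boundary of dimension $d$. Compactness together with the manifold-with-boundary hypothesis supplies a finite triangulation of dimension $d$, so $H_k(\XC{r}(\SUm{2});\mathbb{Q})=0$ for $k>d$; part (a) produces a nonzero class in degree $N=3r-3$, forcing $d\geqs N$. For the reverse direction, Lemma \ref{lem22BL} shows that the good locus $\XC{r}(\SUm{2})^{good}$, which is non-empty and open for $r\geqs 2$, is the base of a principal $P\SUm{2}$-bundle whose total space is an open subset of the smooth manifold $\SUm{2}^r$; it is therefore itself a smooth real manifold (without boundary) of dimension $r\dim\SUm{2}-\dim P\SUm{2}=3r-3$. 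Invariance of domain then forces $d=3r-3$, since a locally Euclidean subspace of dimension $3r-3$ sitting as an open subset of a chart $\R^d$ or $\R^{d-1}\times[0,\infty)$ of the ambient manifold-with-boundary can only exist when $d=3r-3$.

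The main (mild) obstacle lies in (a): verifying that the top-degree contributions from $a$ and $b$ cannot cancel. The parity analysis above shows that exactly one of the two series contributes the monomial $t^{3r-3}$, so no cancellation can occur. In (b) the only residual point is that the good locus is genuinely non-empty and open, which is immediate from Lemma \ref{lem22BL} and the density of good representations used throughout the paper.
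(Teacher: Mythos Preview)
Your argument for (a) is correct and is exactly the parity-of-$r$ analysis the paper carries out, just with a bit more detail about why no cancellation can occur between the two series.

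For (b) your invariance-of-domain argument via the good locus is correct and suffices by itself: comparing local homology at a point of the open $(3r-3)$--manifold $\XC{r}(\SUm{2})^{good}$ with that of a $d$--manifold-with-boundary forces $d=3r-3$ outright. The paper reaches the same conclusion slightly differently, by observing that $\XC{r}(\SUm{2})$ has semi-algebraic dimension $(n^2-1)(r-1)=3r-3$ and that this must agree with the manifold dimension. Your route is more purely topological and avoids the semi-algebraic framework; the paper's route avoids the bundle lemma. One caution: your homological lower bound $d\geqs N$ is justified by claiming that a compact topological manifold-with-boundary admits a finite triangulation of dimension $d$. This is not true in general (non-triangulable topological manifolds exist in dimensions $\geqs 4$). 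The conclusion $H_k=0$ for $k>d$ is still correct, e.g.\ via covering dimension, but in any case this step is redundant since your invariance-of-domain argument already pins down $d$ exactly.
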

\begin{proof}
(a) We have seen that $\deg P_{t}=N=3r-3$. To determine its top coefficient
for $r\geqs3$, note that the top coefficient of $P_{t}$ is either
the top coefficient of $a$, when $r=2k+1$ is odd, or the top coefficient
of $b$, when $r=2k+2$ is even. According to equations (\ref{eq:a})
and (\ref{eq:b}) we have that both the top coefficients of $a$ and
$b$ are 1 (in the odd case, $r=2k+1$, so that $\binom{r}{2k+1}t^{2k+4}t^{4k-4}=1\cdot t^{6k}$,
and $6k=3(r-1)$ and similarly in the even case).

(b) The dimension of $\XC{r}(\SUm{n})$ as a semi-algebraic set is
$(n^{2}-1)(r-1)$ for $r\geqs2$, and if it additionally is a topological
manifold, the dimensions coincide. All semi-algebraic sets have dense
subsets which are manifolds, and it is not hard to see that the irreducible
representations are all smooth and form a dense subset. Clearly, the
projection $\SUm{n}^{r}\to\XC{r}(\SUm{n})$ is locally submersive
at irreducible representations (since their stabilizers are zero dimensional)
and thus the dimension in this case is easily seen to be the dimension
of the tangent space to the representation, $(n^{2}-1)r$ (since $\SUm{n}^{r}$
is smooth), minus the dimension of the orbit, which is $n^{2}-1$,
since the stabilizer is finite. When $n=2$, we get the claim. 
\end{proof}

We will use the following standard facts (see \cite{Hatcher}). By a closed manifold we mean a connected compact
topological manifold (without boundary).

\begin{prop}
\label{pro:Orientable}Let $M$ be an $m$--dimensional closed manifold. If 
$$\dim H_{m}(M,\mathbb{Q})=1,$$ then $M$ is orientable. 
\end{prop}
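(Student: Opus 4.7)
The plan is to prove the contrapositive: if the closed connected $m$-manifold $M$ is non-orientable, then $H_m(M;\mathbb{Q}) = 0$, which contradicts the assumption $\dim H_m(M;\mathbb{Q}) = 1$.

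First I would set up the orientation double cover $p\co \widetilde{M} \to M$. By the standard construction, $\widetilde{M}$ is a closed orientable $m$-manifold, and it is connected precisely because $M$ is non-orientable. Orientability together with the fundamental class theorem for closed connected orientable manifolds gives $H_m(\widetilde{M};\mathbb{Q}) \cong \mathbb{Q}$, generated by the fundamental class $[\widetilde{M}]$. By construction, the nontrivial deck transformation $\tau\co \widetilde{M} \to \widetilde{M}$ is orientation-reversing, so $\tau_*[\widetilde{M}] = -[\widetilde{M}]$, meaning $\tau$ acts as $-1$ on $H_m(\widetilde{M};\mathbb{Q})$. In particular, the $\mathbb{Z}/2$-invariant subspace $H_m(\widetilde{M};\mathbb{Q})^{\mathbb{Z}/2}$ is zero.

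Next I would invoke the transfer homomorphism for the finite cover $p$. With rational coefficients (or any coefficients in which the degree $2$ is invertible), the transfer
$$\mathrm{tr}\co H_*(M;\mathbb{Q}) \to H_*(\widetilde{M};\mathbb{Q})$$
satisfies $p_*\circ \mathrm{tr} = 2\cdot \mathrm{id}$, so $\mathrm{tr}$ is injective, while its image lies in the $\mathbb{Z}/2$-invariants $H_*(\widetilde{M};\mathbb{Q})^{\mathbb{Z}/2}$. Applying this in degree $m$, the image of $\mathrm{tr}$ sits inside the zero subspace, forcing $H_m(M;\mathbb{Q}) = 0$, contradiction.

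There is no substantial obstacle here: all ingredients are classical, and indeed the statement is a direct consequence of Hatcher's computation of the top homology of a closed connected $n$-manifold (Theorem 3.26 of \emph{Algebraic Topology}), which says that $H_n(M;R)$ is either $R$ (in the $R$-orientable case) or isomorphic to the $2$-torsion subgroup of $R$ (otherwise); with $R=\mathbb{Q}$ the latter is zero. So an alternative plan is simply to cite this theorem. The only point requiring any care is the verification that the deck transformation of the orientation double cover acts by $-1$ on top rational homology, which is built into the definition of the cover.
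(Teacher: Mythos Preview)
The paper does not give its own proof of this proposition: it is stated as one of several ``standard facts (see \cite{Hatcher})'' and left without argument. Your proposal is correct and supplies precisely the details the paper omits. The orientation-double-cover plus transfer argument is one of the standard routes to the result, and your alternative---simply invoking Hatcher's Theorem~3.26, which computes the top homology of a closed connected manifold with arbitrary coefficients---is effectively what the paper does by citation. So there is nothing to compare here beyond noting that you have written out what the paper takes for granted.
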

\begin{thm}[Poincar\'e duality]
\label{thm:PoincareDuality} Let $M$ be a closed
manifold of dimension $m$, and let $b_{k}=\dim H_{k}(M,\mathbb{Q})$
be its $k^{{\rm th}}$ Betti number. If $M$ is orientable, then $b_{k}=b_{m-k}$,
for all $k=0,...,m$. 
\end{thm}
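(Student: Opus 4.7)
The plan is to deduce the Betti number identity from the standard duality isomorphism, whose existence rests on the fundamental class of an orientable closed manifold. First I would construct a fundamental class $[M] \in H_m(M;\mathbb{Z})$, using orientability: an orientation is a choice of local generator of $H_m(M, M\setminus\{x\};\mathbb{Z}) \cong \mathbb{Z}$ at each point $x \in M$ that is locally consistent. Compactness then lets one patch these local generators into a global class $[M]$ which restricts to the chosen generator at each point. (One checks that this class is unique up to sign and maps to the chosen local generator under the connecting map to $H_m(M, M\setminus K;\mathbb{Z})$ for every compact $K$.) Passing to rational coefficients we get the rational fundamental class, still denoted $[M]$.

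Second, I would invoke the cap product pairing $$\frown \co H^k(M;\mathbb{Q}) \otimes H_m(M;\mathbb{Q}) \maps H_{m-k}(M;\mathbb{Q}),$$ and prove that capping with $[M]$ yields an isomorphism $D\co H^k(M;\mathbb{Q}) \srm{\isom} H_{m-k}(M;\mathbb{Q})$ for every $k$. This is the technical heart of the argument, and the main obstacle. The standard route is to establish a local version of duality, stated for open subsets $U \subseteq M$ in terms of compactly supported cohomology $H^k_c(U)$, and then prove the local-to-global step by a Mayer--Vietoris induction: the claim holds for $U$ homeomorphic to $\mathbb{R}^m$ (direct calculation), and if it holds for $U$, $V$, and $U \cap V$, the five lemma applied to the pair of Mayer--Vietoris sequences in $H^*_c$ and $H_{m-*}$ shows it holds for $U \cup V$. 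A compactness/exhaustion argument (with the direct limit over compact subsets built into $H^*_c$) then yields the global statement. Since $M$ is closed, $H^k_c(M) = H^k(M)$ and we recover the duality isomorphism $D$.

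Finally, I would combine $D$ with the Universal Coefficient Theorem over the field $\mathbb{Q}$, which gives a natural isomorphism $H^k(M;\mathbb{Q}) \cong \hom_\mathbb{Q}(H_k(M;\mathbb{Q}),\mathbb{Q})$ (no Ext terms appear over a field). Since $M$ is compact, each $H_k(M;\mathbb{Q})$ is finite-dimensional, so $\dim H^k(M;\mathbb{Q}) = \dim H_k(M;\mathbb{Q}) = b_k$. Combining, $$b_k = \dim H^k(M;\mathbb{Q}) = \dim H_{m-k}(M;\mathbb{Q}) = b_{m-k},$$ as claimed. The only routine pieces are the existence of the fundamental class from orientability and the naturality used in the Mayer--Vietoris induction; the main obstacle, as noted, is proving that $\frown [M]$ is an isomorphism, for which the local-to-global inductive scheme is the cleanest approach.
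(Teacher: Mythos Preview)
Your proposal is correct and follows the standard textbook approach (fundamental class, cap-product isomorphism via Mayer--Vietoris induction, then Universal Coefficients over $\mathbb{Q}$). However, the paper does not actually prove this theorem: it is stated as one of several ``standard facts'' with a reference to \cite{Hatcher}, so there is no proof in the paper to compare against. Your sketch is essentially the proof one finds in that reference.
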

\begin{prop}
\label{pro:Boundary}Let $M$ be a compact connected manifold of dimension
$m$ with non-empty boundary. Then $H_{m}(M,\mathbb{Q})=0$. 
\end{prop}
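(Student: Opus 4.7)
The plan is to combine the long exact sequence of the pair $(M,\partial M)$ with Lefschetz duality, separating the orientable and non-orientable cases. No ingredient beyond the standard topology of manifolds with boundary is needed.

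First I would extract the relevant piece of the rational long exact sequence of the pair,
$$H_m(\partial M;\mathbb{Q}) \to H_m(M;\mathbb{Q}) \to H_m(M,\partial M;\mathbb{Q}) \xrightarrow{\partial_*} H_{m-1}(\partial M;\mathbb{Q}).$$
Since $\partial M$ is a (possibly disconnected) closed manifold of dimension $m-1$, its rational homology vanishes in degree $m$, so $H_m(\partial M;\mathbb{Q})=0$ and the map $H_m(M;\mathbb{Q}) \to H_m(M,\partial M;\mathbb{Q})$ is injective. It therefore suffices to show that its image is zero, which I will do by analyzing $H_m(M,\partial M;\mathbb{Q})$ directly.

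Next I would invoke Lefschetz duality in the form
$$H_m(M,\partial M;\mathbb{Q}) \cong H^0(M;\mathbb{Q}_w),$$
where $\mathbb{Q}_w$ is the orientation local system of $M$ with rational coefficients. If $M$ is non-orientable, $\mathbb{Q}_w$ is a non-trivial flat rational line bundle over the connected space $M$, so its sections vanish and $H^0(M;\mathbb{Q}_w)=0$; combined with the previous step, this yields $H_m(M;\mathbb{Q})=0$ immediately. If instead $M$ is orientable, then $H_m(M,\partial M;\mathbb{Q}) \cong \mathbb{Q}$ is generated by the relative fundamental class $[M,\partial M]$, and the connecting homomorphism sends $[M,\partial M]$ to the fundamental class $[\partial M] \in H_{m-1}(\partial M;\mathbb{Q})$ of the induced-orientation boundary.

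The one point requiring care is that $\partial M$ may have several components $C_1,\dots,C_s$, and one must confirm that $[\partial M]$ is non-zero rather than a cancellation among components; this is clear, however, since $H_{m-1}(\partial M;\mathbb{Q}) = \bigoplus_i H_{m-1}(C_i;\mathbb{Q})$ and $[\partial M] = \sum_i [C_i]$ places a non-zero fundamental class in each (orientable) summand. Hence $\partial_*$ is injective in the orientable case as well, and exactness of the sequence forces $H_m(M;\mathbb{Q})=0$, completing the plan. I do not expect any serious obstacle; the argument is a direct unraveling of Lefschetz duality and the connecting-map description of the relative fundamental class.
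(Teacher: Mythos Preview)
Your argument is correct. The paper, however, does not actually prove this proposition: it is listed among ``standard facts'' with a reference to Hatcher's textbook, alongside the orientability criterion and Poincar\'e duality, and no proof is given. So there is nothing in the paper to compare your approach against; you have supplied a valid proof where the authors simply cite the literature.

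As a minor remark, a slightly shorter route (and the one most readers would extract from Hatcher) is to use the collar neighborhood theorem to see that the inclusion $\mathrm{int}(M)\hookrightarrow M$ is a homotopy equivalence, and then invoke the fact that a connected non-compact $m$--manifold without boundary has vanishing $H_m$. Your Lefschetz-duality argument is equally valid and has the advantage of making the orientable/non-orientable dichotomy explicit.
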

\begin{cor}
\label{keycorollary} Let $r\geqs3$. If $\XC{r}(\SUm{2})$ is homotopy equivalent to a manifold
with boundary, then $r=3$. 
\end{cor}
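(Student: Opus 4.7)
The plan is to combine Baird's polynomial $P_t(\XC{r}(\SUm{2}))$ from Lemma \ref{Baird} with rational Poincar\'e duality to force $r=3$. Suppose $X:=\XC{r}(\SUm{2})\simeq M$ for some compact topological manifold $M$ with (possibly empty) boundary, and set $m=\dim M$. Homotopy invariance of rational Betti numbers combined with Lemma \ref{lem:TopCoef}(a) gives $b_{3r-3}(M)=1$, so $m\geqs 3r-3$; the argument of Lemma \ref{lem:TopCoef}(b), using that $X$ has semi-algebraic dimension $3r-3$, then pins down $m=3r-3$. If $\partial M \neq \emptyset$, Proposition \ref{pro:Boundary} forces $H_{3r-3}(M;\mathbb{Q})=0$, contradicting $b_{3r-3}(M)=1$; hence $M$ is closed. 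Proposition \ref{pro:Orientable} upgrades this to orientability of $M$, and Theorem \ref{thm:PoincareDuality} then supplies the palindromic symmetry $b_k(X)=b_{3r-3-k}(X)$ for every $k$.

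It remains to show that $P_t(X)$ is palindromic of degree $3r-3$ if and only if $r=3$. The case $r=3$ is immediate, since $P_3(t)=1+t^6$. For $r\geqs 4$ I would exhibit failure at the symmetric pair of degrees $(3,\,3r-6)$. Because $a(t)$ and $b(t)$ in Lemma \ref{Baird} are supported in degrees $\geqs 6$, we have $b_3(X)=0$, so palindromy would force $b_{3r-6}(X)=0$. The coefficient of $t^{3r-6}$ in $P_t$ is read off directly from Lemma \ref{Baird} by solving $2k+4+4j=3r-6$ for contributions from $a(t)$ (with $0\leqs j\leqs k-1$ and $2k+1\leqs r$) and $2k+7+4j=3r-6$ for contributions from $b(t)$ (with $0\leqs j\leqs k-1$ and $2k+2\leqs r$). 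A parity analysis shows that for $r$ even the only admissible pair comes from $a$ at $k=(r-2)/2$, $j=(r-4)/2$, while for $r$ odd the only admissible pair comes from $b$ at $k=(r-3)/2$, $j=(r-5)/2$; in either case the contribution is $\binom{r}{r-1}=r$. Therefore $b_{3r-6}(X)=r\neq 0$ whenever $r\geqs 4$, contradicting palindromy and leaving $r=3$ as the only possibility.

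The main technical obstacle is the bookkeeping that isolates a single admissible $(k,j)$ pair in each parity of $r$: the integrality constraints $4\mid 3r-2k-10$ for $a(t)$ and $4\mid 3r-2k-13$ for $b(t)$ must be combined with the range restrictions forced by $0\leqs j\leqs k-1$ and by non-vanishing of the binomial coefficients to rule out spurious contributions and confirm that the surviving coefficient simplifies cleanly to $\binom{r}{r-1}=r$. Once this case analysis is verified, the argument closes.
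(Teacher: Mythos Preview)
Your proposal is correct and follows essentially the same route as the paper: rule out nonempty boundary using $b_{3r-3}=1$ together with Proposition~\ref{pro:Boundary}, upgrade to orientability via Proposition~\ref{pro:Orientable}, and then derive a contradiction from Poincar\'e duality by exhibiting a non-palindromic pair of Betti numbers. The only difference is the choice of witness pair: the paper uses $b_4=0$ versus $b_{3r-7}\neq 0$ (and then handles $r=4$ by the explicit polynomial $1+4t^6+t^9$), whereas you use $b_3=0$ versus $b_{3r-6}=r$. Your choice is mildly cleaner, since the computation $b_{3r-6}=\binom{r}{r-1}=r$ already covers $r=4$ without a separate case. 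The step where you invoke the argument of Lemma~\ref{lem:TopCoef}(b) to pin down $\dim M=3r-3$ is exactly the ``suppose without loss of generality that $\XC{r}(\SUm{2})$ is a manifold'' move the paper makes; neither version spells out why the semi-algebraic dimension of $X$ bounds the dimension of a merely homotopy-equivalent manifold, so you are not introducing a new gap here.
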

\begin{proof}
First we show that the polynomial $P_{t}(\XC{r}(\SUm{2}))=b_{0}+b_{1}t+\cdots+t^{N}$,
where $b_{k}=\dim H_{k}(\XC{r}(\SUm{2}),\mathbb{Q})$, does not satisfy
$b_{k}=b_{N-k}$, when $r\geqs4$. This is clear by looking at the
coefficients. For example, $b_{4}=0$ and, when $r\geqs 5$, equations
(\ref{eq:a}) and (\ref{eq:b}) imply there is always a nonzero coefficient
for the term of degree $N-4=3r-7$, so that $b_{N-4}\neq0$. When $r=4$, we have $P_{t}=1+4t^{6}+t^{9}$
which does not satisfy Poincar\'e duality either. Since Betti numbers are a homotopy invariant, 
we now suppose without loss of generality that $\XC{r}(\SUm{2})$
is a manifold possibly with boundary. Then Lemma \ref{lem:TopCoef}
and Proposition \ref{pro:Boundary} show that $\XC{r}(\SUm{2})$
has no boundary. So, $\XC{r}(\SUm{2})$ is closed of dimension $3r-3$,
and therefore orientable by Proposition \ref{pro:Orientable}. Thus
Poincar\'e duality (Theorem \ref{thm:PoincareDuality}) applies, and
we get a contradiction. So, $\XC{r}(\SUm{2})$ is not a closed manifold
for $r\geqs4$. Since it is connected and compact, it is not everywhere
locally homeomorphic to $\mathbb{R}^{3r-3}$ or to a Euclidean half-space either.
\end{proof}

As a consequence of the above theorem, given the computations in \cite{FlLa2} of the compact and complex $(r,n)=(2,2)$ and $(3,2)$ cases, 
the following corollary holds true.

\begin{cor}
The character variety
$\XC{r}(\Um{2})$, $\XC{r}(\GLm{2})$, $\XC{r}(\SUm{2})$, or $\XC{r}(\SLm{2})$ has the
homotopy type of a manifold if and only if $1\leqs r\leqs 3$.
\end{cor}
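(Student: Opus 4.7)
The plan is to prove both implications of the biconditional, with the new work concentrated in the $\Um{2}$ case.

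\medskip
\noindent\emph{Sufficiency.} When $r\in\{1,2\}$ we have $(r-1)(n-1)<2$ for $n=2$, so Remark~\ref{othercases} shows directly that each of the four character varieties is a topological manifold (possibly with boundary). For $r=3$, the explicit computation of \cite{FlLa2} gives a homeomorphism $\XC{3}(\SUm{2})\cong S^6$; then Corollary~\ref{P-cor} realizes $\XC{3}(\Um{2})$ as the quotient $(S^6\times T^3)/F^3$ of a manifold by a free action of the finite group $F^3$, hence a manifold. The $\SLm{2}$ and $\GLm{2}$ cases follow from the compact models by Theorem~\ref{deformretract}.

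\medskip
\noindent\emph{Necessity.} Let $r\geqs 4$. By Theorem~\ref{deformretract} we have $\XC{r}(\SLm{2})\simeq \XC{r}(\SUm{2})$ and $\XC{r}(\GLm{2})\simeq \XC{r}(\Um{2})$, so it suffices to rule out the two compact cases. The $\SUm{2}$ case is precisely Corollary~\ref{keycorollary}. For $\XC{r}(\Um{2})$ the strategy is to compute its rational Poincar\'e polynomial and re-run the argument of Corollary~\ref{keycorollary}. With $T=Z(\Um{2})\cong S^1$ and $F=T\cap \SUm{2}=\{\pm I\}\cong\mathbb{Z}/2$, Corollary~\ref{P-cor} identifies $\XC{r}(\Um{2})$ with the quotient of $\XC{r}(\SUm{2})\times T^r$ by a free diagonal action of $F^r$, so the natural map
$$\XC{r}(\SUm{2})\times T^r\maps \XC{r}(\Um{2})$$
is a $2^r$-fold covering. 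Each generator of $F^r$ acts on $T^r$ by an order-two rotation and on $\XC{r}(\SUm{2})$ by left multiplication by $-I\in\SUm{2}$ in one coordinate; because $S^1$ and $\SUm{2}$ are path-connected Lie groups, both actions are homotopic to the identity, so the induced $F^r$-action on $H^\ast(\XC{r}(\SUm{2})\times T^r;\mathbb{Q})$ is trivial. The transfer isomorphism for a finite free quotient then yields
$$P_t(\XC{r}(\Um{2}))\;=\;P_t(\XC{r}(\SUm{2}))\cdot(1+t)^r.$$

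\medskip
\noindent\emph{Conclusion.} Since $(1+t)^r$ is self-reciprocal of degree $r$, the palindromic identity $t^{4r-3}P_{1/t}(\XC{r}(\Um{2}))=P_t(\XC{r}(\Um{2}))$ is equivalent to $t^{3r-3}P_{1/t}(\XC{r}(\SUm{2}))=P_t(\XC{r}(\SUm{2}))$, and the first paragraph of the proof of Corollary~\ref{keycorollary} shows this fails for $r\geqs 4$. The top coefficient of $P_t(\XC{r}(\Um{2}))$ equals $1$ at degree $4r-3$, which coincides with the semialgebraic dimension of $\XC{r}(\Um{2})$ (its irreducibles form a dense smooth locus of that dimension, exactly as in Lemma~\ref{lem:TopCoef}(b)). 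Mimicking the proof of Corollary~\ref{keycorollary}, Proposition~\ref{pro:Boundary} rules out any non-empty boundary, Proposition~\ref{pro:Orientable} forces orientability, and Theorem~\ref{thm:PoincareDuality} then contradicts the failure of the palindromic identity. The main technical step is the Poincar\'e-polynomial computation: the key observation is that both pieces of the $F^r$-action come from left multiplication by elements of path-connected Lie groups, which makes the cohomological action trivial and collapses the transfer argument to the clean product formula.
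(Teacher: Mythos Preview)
Your necessity argument for $\Um{2}$ contains a genuine gap at the key step. You claim that the $F^r$--action on $\XC{r}(\SUm{2})$ is homotopic to the identity ``because $\SUm{2}$ is path-connected.'' A path $\gamma(t)$ from $I$ to $-I$ in $\SUm{2}$ does give a homotopy of self-maps of $\SUm{2}^r$ (left-multiply one coordinate by $\gamma(t)$), but for $t\in(0,1)$ the element $\gamma(t)$ is not central, so left multiplication by $\gamma(t)$ does \emph{not} commute with simultaneous conjugation and the homotopy does not descend to $\XC{r}(\SUm{2})=\SUm{2}^r/\SUm{2}$. Thus the product formula $P_t(\XC{r}(\Um{2}))=P_t(\XC{r}(\SUm{2}))\cdot(1+t)^r$ is not established by your reasoning. (The $T^r$ half of your argument is fine: there the action really is by translations in a connected group acting on itself.)

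There is a clean way to bypass this computation entirely. The map $\XC{r}(\SUm{2})\times T^r\to \XC{r}(\Um{2})$ is a finite covering. If $\XC{r}(\Um{2})$ were homotopy equivalent to a closed manifold $M$, the corresponding finite cover $\tilde M$ of $M$ would be a closed manifold homotopy equivalent to $\XC{r}(\SUm{2})\times T^r$, so the latter would be a rational Poincar\'e duality space. By K\"unneth, $H^*(\XC{r}(\SUm{2})\times T^r;\mathbb{Q})\cong H^*(\XC{r}(\SUm{2});\mathbb{Q})\otimes H^*(T^r;\mathbb{Q})$, and since $H^*(T^r;\mathbb{Q})$ is itself a PD algebra, an elementary argument shows $H^*(\XC{r}(\SUm{2});\mathbb{Q})$ must be one as well --- contradicting what is proved in Corollary~\ref{keycorollary} for $r\geqs 4$. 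This is presumably what the paper has in mind with its one-line proof; your route via an explicit Poincar\'e polynomial is more informative but needs the missing justification.

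A smaller issue in your sufficiency direction: $\XC{3}(\SUm{2})$ is only \emph{homotopy equivalent} to $S^6$ (it has non-manifold points, e.g.\ at the trivial representation), so $(\XC{3}(\SUm{2})\times T^3)/F^3$ is not literally a quotient of a manifold by a free action. The paper simply cites the explicit computations in \cite{FlLa2} for the $r\leqs 3$ cases; you should do the same, or else argue that the fiber bundle $\XC{3}(\Um{2})\to T^3$ with fiber $\simeq S^6$ is fiber-homotopy trivial.
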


Note this corollary is non-trivial since the complex $(3,2)$ case is not a manifold with boundary but it does deformation retract to a manifold.  Moreover, we have shown not only that $\XC{r}(\SUm{2})$ and $\XC{r}(\SLm{2})$ are not generally homotopic to manifolds, but the stronger statement that they are not {\it rational Poincar\'e Duality Spaces}.

\appendix

\section{The proof of Proposition~\ref{trans-prop}}

\subsection{Background}
We will be dealing with simplicial complexes, subdivision, and simplicial maps, so we begin by fixing some terminology and conventions.  For full details, see \cite[Sections 2.1, 2.C]{Hatcher}.  Given a space $Z$, a \e{triangulation} $T$ of $Z$ is a covering of $Z$ by subsets $\sigma \subseteq Z$,  called simplices, equipped with homeomorphisms $\chi_\sigma \co \Delta^k \srm{\isom} \sigma$,
where 
$\Delta^k  \subset \bbR^{k+1}$ is the convex hull of the standard basis vectors $e_1, \ldots, e_{k+1}$.  
The points $\chi_\sigma (e_i)$ are the \e{vertices} of $\sigma$.
This data is subject to the usual axioms.  In particular, a set $U\subset Z$ is open if and only if $U\cap \sigma$ is open in $\sigma$ for all $\sigma \in T$.
Suppressing the characteristic functions from the notation, we write $(Z; T)$ to denote $Z$ equipped with the triangulation $T$, and we refer to this data as a simplicial complex.
We write $\sigma = [t_0, \ldots, t_k]$ to indicate that $\sigma$ is the (unique) simplex in $(Z; T)$ with vertex set $\{t_0, \ldots, t_k\}$.    

A \e{subcomplex} $(Y; T|_Y)$ of $(Z; T)$ is simply a union of simplices in $T$, with the induced triangulation.  The $k$--skeleton of $(Z; T)$ is the subcomplex consisting of all simplices of dimension at most $k$.  Each subcomplex of $(Z; T)$ is closed as a subset of $Z$.  A subcomplex $Y$ of $(Z; T)$ is \e{full} if for every simplex $\sigma = [z_0, \ldots, z_k]\in T$ with $z_i \in Y$ for $i = 0, \ldots, k$, we have $\sigma\subset Y$.

For a  simplex $\sigma$ in a simplicial complex $(Z; T)$, the \e{interior} $\inter{\sigma} = \Inter {\sigma}$ is the subset of $\sigma$ formed by removing all proper faces; equivalently, $\inter{\sigma}$ is the interior of $\sigma$ as a topological manifold with boundary. 
In particular, if $\sigma$ is a vertex, then $\inter{\sigma} = \sigma$.  
Every point in $Z$ lies in the interior of exactly one simplex from $T$.
The \e{boundary} $\partial \sigma = \sigma \sm\inter{\sigma}$ of $\sigma$ is the union of all proper faces of $\sigma$; equivalently, $\partial \sigma$ is the boundary of $\sigma$ as a manifold.

A \e{simplicial map} $(Z; T) \to (Z'; T')$ between simplicial complexes is a map $f\co Z\to Z'$ such that for each $\sigma\in T$, we have $f(\sigma)\in T'$, and
$f|_\sigma$ is linear with respect to the homeomorphisms $\chi_\sigma$ and $\chi_{f(\sigma)}$. 
We note that this implies that 
\begin{equation}\label{simpl-map} f(\inter{\sigma}) = \Inter{f(\sigma)}.\end{equation}

The characteristic functions $\chi_\sigma$ give  us a well-defined notion of the \e{barycenter} $\beta (\sigma)$ for each simplex $\sigma\in T$ and hence a well-defined barycentric subdivision $\beta(T)$ of $T$.
We need a small modification to deal with the fact that a simplicial map $f\co (Z; T)\to (Z'; T')$  does not carry barycenters to barycenters (for instance, consider a simplicial map from a 2--simplex onto a 1--simplex), and hence may no longer be simplicial after barycentrically subdividing the domain and range.
Given any point $b$ in the interior of a $k$--simplex $\sigma$, one may subdivide by barycentrically subdividing all faces of $\sigma$ and then, for each simplex $\tau$ in the subdivision of a face of $\sigma$, adding in the cone on $\tau$ with conepoint $b$ (identifying $\sigma$ with a standard simplex via $\chi_\sigma$, this cone is just the union of all line segments from points in $\tau$ to $b$).  We call this the $b$--centric subdivision of $\sigma$.

Now, if $(Z; T)$ is a 2--dimensional simplicial complex  and 
$$f\co (Z; T) \to (Z'; T')$$ is simplicial, we may choose, for each 2--simplex $\sigma \in T$, a point $b = b(\sigma)$ such that $b\in \Inter{\sigma}$ and $f(b) = \beta(f(\sigma))$.  If $f(\sigma)$ is either $0$-- or $2$--dimensional, then we will always take $b = \beta(\sigma)$ (but if $f(\sigma)$ is an edge, we must choose $b$ differently).
We can now define a subdivision $(Z; S)$ of $(Z; T)$ by taking the barycentric subdivision of the 1--skeleton together with the $b$--centric subdivision of each 2--simplex.  
 We call this an $f$--centric subdivision of $(Z; T)$.  Note that $f$ is simplicial as a map $(Z; S)\to (Z'; T')$.

We need a few standard, elementary lemmas that will be used (sometimes implicitly) throughout this Appendix.  

\begin{lem} \label{int} If $Y\subseteq X$ is a subcomplex of a simplicial complex $(X; T)$ and $\tau\in T$, then $\tau \subset Y$ if and only if   $\inter{\tau}\cap Y \neq \emptyset$.  
\end{lem}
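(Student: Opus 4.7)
The plan is to prove both directions directly from the definitions, using the fact stated earlier in the excerpt that every point of $X$ lies in the interior of exactly one simplex of $T$.

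For the forward direction, I would simply observe that if $\tau \subset Y$, then $\inter{\tau}\subset \tau\subset Y$, and since $\inter{\tau}$ is non-empty (it contains, for instance, the barycenter of $\tau$, or $\tau$ itself if $\tau$ is a vertex), we conclude $\inter{\tau}\cap Y\neq\emptyset$.

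For the reverse direction, suppose $x\in\inter{\tau}\cap Y$. Since $Y$ is a subcomplex of $(X;T)$, it is by definition a union of simplices of $T$, so there exists some $\sigma\in T$ with $\sigma\subseteq Y$ and $x\in\sigma$. Every simplex is the disjoint union of the interiors of its faces, so $x\in\inter{\sigma'}$ for some face $\sigma'$ of $\sigma$. But $x$ lies in the interior of a unique simplex of $T$, and we already know $x\in\inter{\tau}$, so $\sigma'=\tau$; hence $\tau$ is a face of $\sigma$. Since a subcomplex is closed under taking faces (being a union of simplices, it contains every face of every simplex it contains), we conclude $\tau\subseteq Y$, as required.

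The only subtlety worth flagging is making sure to invoke the earlier stated property that each point of $X$ belongs to the interior of exactly one simplex of $T$; this is what rules out $x$ lying in the interior of two different simplices and forces $\tau$ itself to be a face of the witnessing simplex $\sigma\subseteq Y$. There is no real obstacle here — the lemma is a direct unwinding of the definition of ``subcomplex'' and the disjointness of simplex interiors.
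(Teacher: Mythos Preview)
Your proof is correct. The paper does not actually prove this lemma; it is listed among ``a few standard, elementary lemmas'' and stated without proof, so your direct unwinding of the definition of subcomplex together with the uniqueness of the simplex containing a point in its interior is exactly the intended argument.
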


\begin{lem} \label{inv} If $f\co (Z; S) \to (X; T)$ is a simplicial map and $Y$ is a full subcomplex of $(X; T)$, then 
$$f^{-1} (Y) = \bigcup \{ [z_0, \ldots, z_k] \in S \,:\, f(z_i) \in Y \textrm{ for } i = 0, \ldots, k\}.$$
\end{lem}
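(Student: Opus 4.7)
The plan is to prove set equality in both directions, using that each point of $Z$ lies in the interior of a unique simplex, together with the interior-preservation property (\ref{simpl-map}) of simplicial maps and Lemma \ref{int}.

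For the containment $\supseteq$, I would take a simplex $\sigma = [z_0,\ldots,z_k] \in S$ with $f(z_i) \in Y$ for each $i$. Because $f$ is simplicial, $f(\sigma)$ is the simplex in $T$ whose vertex set is the (possibly repeated) collection $\{f(z_0),\ldots,f(z_k)\}$, so in particular every vertex of $f(\sigma)$ lies in $Y$. Since $Y$ is full, this forces $f(\sigma) \subseteq Y$, so $\sigma \subseteq f^{-1}(Y)$ as required. This direction does not actually use fullness in a deep way—only that $Y$, being a subcomplex, contains the simplex spanned by any vertex set it contains, which is precisely fullness.

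For the reverse containment $\subseteq$, I would let $z \in f^{-1}(Y)$ and let $\sigma = [z_0,\ldots,z_k]$ be the unique simplex of $S$ with $z \in \inter{\sigma}$. By (\ref{simpl-map}), $f(z) \in f(\inter{\sigma}) = \Inter{f(\sigma)}$, so $\inter{f(\sigma)} \cap Y \neq \emptyset$. Lemma \ref{int} then yields $f(\sigma) \subseteq Y$. The vertices of $f(\sigma)$ are exactly the distinct images $f(z_i)$, and each such vertex lies in $Y$ as a $0$--simplex of the subcomplex, hence $f(z_i) \in Y$ for every $i$. Therefore $\sigma$ belongs to the right-hand collection and contains $z$, completing the proof.

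The argument is essentially a bookkeeping exercise, so I do not anticipate a genuine obstacle. The only subtle point is being careful about what ``$f(\sigma)$'' means when $f$ collapses vertices: one must remember that even in that case each $f(z_i)$ remains a vertex of the simplex $f(\sigma)$, which is exactly what lets fullness of $Y$ be applied vertex-by-vertex.
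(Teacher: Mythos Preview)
Your proof is correct and follows exactly the route the paper indicates: the paper simply remarks that the lemma follows from Lemma~\ref{int} together with (\ref{simpl-map}), and your two-direction argument unpacks precisely that. There is nothing to add.
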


This follows from Lemma~\ref{int} along with (\ref{simpl-map}).

\begin{lem} $\label{full}$ If $Y\subseteq X$ is a subcomplex of a simplicial complex $(X, T)$, then $Y$ is a \e{full} subcomplex of the barycentric subdivision $(X, \beta (T))$$:$ that is, if $\sigma \in \beta (T)$ is a simplex all of whose vertices lie in $Y$, then $\sigma \subset Y$.
\end{lem}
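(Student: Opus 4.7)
The plan is to use the standard description of the simplices of the barycentric subdivision in terms of chains of simplices of the original triangulation. Any simplex $\tau \in \beta(T)$ has the form $\tau = [\beta(\sigma_0), \beta(\sigma_1), \ldots, \beta(\sigma_k)]$ where $\sigma_0 < \sigma_1 < \cdots < \sigma_k$ is a strictly ascending chain of faces in $T$, and moreover $\tau \subset \sigma_k$. So to prove fullness, it suffices to show that if every vertex $\beta(\sigma_i)$ lies in $Y$, then $\sigma_k \subseteq Y$, since then $\tau \subseteq \sigma_k \subseteq Y$.

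First I would observe that for each simplex $\sigma \in T$, the barycenter $\beta(\sigma)$ lies in $\inter{\sigma}$ (the interior of $\sigma$ as a simplex of $T$). This is immediate from the definition, since under $\chi_\sigma$ the barycenter corresponds to the point $\frac{1}{k+1}(e_1 + \cdots + e_{k+1})$ of $\Delta^k$, which is interior.

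Now suppose every vertex $\beta(\sigma_i)$ of $\tau$ lies in $Y$. For each $i$, we have $\beta(\sigma_i) \in \inter{\sigma_i} \cap Y$, so by Lemma~\ref{int} applied to the subcomplex $Y$ of $(X, T)$, we conclude that $\sigma_i \subseteq Y$ for every $i$. In particular $\sigma_k \subseteq Y$, and therefore $\tau \subseteq \sigma_k \subseteq Y$, which is exactly the condition for $Y$ to be a full subcomplex of $(X, \beta(T))$.

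There is no real obstacle here; the only subtlety is remembering that fullness is defined with respect to the triangulation $\beta(T)$ (so one must use the chain description of its simplices), while the hypothesis ``$Y$ is a subcomplex'' and the tool Lemma~\ref{int} apply to the original triangulation $T$. The bridge between the two is precisely the fact that each barycenter $\beta(\sigma)$ is interior to the $T$-simplex $\sigma$.
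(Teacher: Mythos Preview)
Your proof is correct and follows essentially the same approach as the paper: both use the chain description of simplices in $\beta(T)$, observe that the barycenter $\beta(\sigma_k)$ lies in $\inter{\sigma_k}$, and apply Lemma~\ref{int} to conclude $\sigma_k \subset Y$. The only difference is cosmetic: the paper phrases it as ``there exists a unique $\tau\in T$ with $\inter{\sigma}\subset\inter{\tau}$ and $\beta(\tau)$ a vertex of $\sigma$'' and applies Lemma~\ref{int} once to this top simplex $\tau=\sigma_k$, whereas you redundantly apply it to every $\sigma_i$ in the chain.
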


This also follows from Lemma~\ref{int}, since for each simplex $\sigma \in \beta (T)$ there exists a (unique) simplex
$\tau\in T$ with $\inter{\sigma} \subset \inter{\tau}$, and $\beta(\tau)$ is a vertex of $\sigma$.

\begin{lem}$\label{nbhd}$  Let $K$ be a simplicial complex and let $\sigma \subseteq K$ be a simplex.
Then the set
$$\st (\sigma) = \bigcup \{ \inter{\Delta} \,|\, \sigma \subseteq \Delta\}$$
is open in $K$.
\end{lem}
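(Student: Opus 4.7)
The plan is to identify the complement $K\setminus \st(\sigma)$ as a subcomplex and hence deduce that it is closed. Recall from the introductory paragraphs of this appendix that a set $U\subseteq K$ is open if and only if $U\cap \tau$ is open in $\tau$ for every simplex $\tau$ of $K$, and equivalently a set $C\subseteq K$ is closed if and only if $C\cap \tau$ is closed in $\tau$ for every $\tau$. So it suffices to exhibit $K\setminus\st(\sigma)$ as a closed union of simplices.

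First I would describe the complement explicitly. Since every point of $K$ lies in the interior of exactly one simplex of $K$, a point $x\in K$ fails to belong to $\st(\sigma)$ if and only if the unique simplex $\tau$ with $x\in\inter{\tau}$ satisfies $\sigma\nsubseteq\tau$. Therefore
\[
K\setminus \st(\sigma) \;=\; \bigcup\bigl\{\, \tau \,:\, \tau\text{ is a simplex of } K \text{ with } \sigma\nsubseteq\tau\,\bigr\},
\]
where each $\tau$ appears as a closed simplex (it is the union of the interiors of its faces, and if $\sigma\nsubseteq\tau$ then $\sigma\nsubseteq\tau'$ for any face $\tau'\subseteq\tau$, so each such face contributes entirely to the complement).

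Next I would verify that this collection $\mathcal{C}=\{\tau : \sigma\nsubseteq\tau\}$ is closed under taking faces: if $\tau\in\mathcal{C}$ and $\tau'\subseteq\tau$ is a face, then $\sigma\nsubseteq\tau'$ (since $\sigma\subseteq\tau'$ would give $\sigma\subseteq\tau$), so $\tau'\in\mathcal{C}$. Hence $K\setminus\st(\sigma)$ is a subcomplex of $K$.

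Finally, to conclude closedness in the weak topology, I would check that for any simplex $\Delta$ of $K$, the intersection $(K\setminus\st(\sigma))\cap \Delta$ is closed in $\Delta$. If $\sigma\nsubseteq\Delta$ this intersection is all of $\Delta$. If $\sigma\subseteq\Delta$, this intersection equals the union of the (finitely many) faces of $\Delta$ which do not contain $\sigma$, a finite union of closed subsets of $\Delta$ and hence closed. Therefore $K\setminus\st(\sigma)$ is closed in $K$, so $\st(\sigma)$ is open. I expect no real obstacle here; the only subtlety worth flagging is the implicit use of the weak (CW) topology on $K$, which is built into the definition of a simplicial complex recalled at the start of the appendix.
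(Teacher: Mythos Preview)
Your argument is correct: identifying $K\setminus\st(\sigma)$ as the union of all closed simplices $\tau$ with $\sigma\nsubseteq\tau$, checking this collection is closed under passing to faces, and then invoking the weak topology (or the fact, already recorded in the appendix, that subcomplexes are closed) is exactly the standard proof. The paper does not actually give its own argument here; it simply cites \cite[Section 62]{Munkres-EAT}, where the same complement-is-a-subcomplex idea is used (Munkres treats the vertex case first and then observes that $\st(\sigma)=\bigcap_{v}\st(v)$ over the vertices $v$ of $\sigma$, but your direct argument for general $\sigma$ is just as clean).
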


The set $\st (\sigma)$ is known as the \emph{open star} of $\sigma$.  The proof that $\st(\sigma)$ is open is elementary, and can be found in \cite[Section 62]{Munkres-EAT}.

\begin{lem} $\label{null}$ Let $Z$ be a path connected topological space. Given a nullhomotopic map $f\co S^2\to Z$, the restrictions $f^+$ and $f^-$ of $f$ to the upper and lower hemispheres of $S^2$ are homotopic relative to the equator $S^1 \subseteq S^2$.

Now assume further that $\pi_2 (Z) = 0$.  If $f,g \co  D^2 \to Z$ are continuous maps that agree on $\partial D^2$, then $f\heq g$ $($rel $\partial D^2)$.
\end{lem}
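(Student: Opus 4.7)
The plan is to reduce both statements to a single equivalence: for any two continuous maps $f^+, f^-\co D^2 \to Z$ agreeing on $\partial D^2$, we have $f^+ \heq f^-$ rel $\partial D^2$ if and only if the glued map $f^+\cup f^-\co S^2 \to Z$ coming from the pushout presentation $S^2 = D^2 \cup_{\partial D^2} D^2$ is nullhomotopic. Once this equivalence is in hand, the first assertion is immediate: taking $f^+\cup f^- = f$, the hypothesis that $f$ is nullhomotopic yields $f^+ \heq f^-$ rel the equator. For the second assertion, I would combine the equivalence with the observation that, for $Z$ path connected with $\pi_2(Z)=0$, every unbased map $S^2\to Z$ is nullhomotopic --- pick a basepoint and slide its image to a fixed basepoint of $Z$ via a path, then invoke the vanishing of $\pi_2$. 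Applied to $f\cup g\co S^2 \to Z$, this makes the glued map nullhomotopic, and the equivalence delivers $f\heq g$ rel $\partial D^2$.

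To prove the equivalence itself, I would identify $D^3 \homeo D^2 \times [0,1]$ and view a homotopy $H\co D^2 \times [0,1] \to Z$ from $f^+$ to $f^-$ rel $\partial D^2$ as an extension to the contractible space $D^2 \times [0,1]$ of the boundary map
\[
\phi\co \partial(D^2\times[0,1]) \to Z,\quad \phi(x,0)=f^-(x),\ \phi(x,1)=f^+(x),\ \phi(y,t)=f^\pm(y)\ \text{for } y\in\partial D^2.
\]
Such an extension exists precisely when $\phi$ is nullhomotopic, since the target $D^2\times[0,1]$ of the extension is contractible. Because $\phi$ is constant along each slice $\{y\}\times[0,1]$ for $y\in\partial D^2$, it factors as $\phi = (f^+\cup f^-)\circ e$, where $e\co \partial(D^2\times[0,1]) \to S^2$ is the quotient map collapsing the cylinder $\partial D^2 \times [0,1]$ down to the equator $\partial D^2$. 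Since $e$ collapses a contractible subcomplex, it is a homotopy equivalence, so $\phi$ is nullhomotopic if and only if $f^+\cup f^-$ is --- completing the equivalence.

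The main point requiring care is the parametrization bookkeeping: identifying $\partial(D^2\times[0,1])$ with $S^2$ and checking that, under this identification, the quotient $e$ matches the hemispheres of $S^2$ with the two copies $D^2 \times \{0,1\}$ and the equator with $\partial D^2$. Once this matching is arranged explicitly, the equivalence is formal, and both consequences of the lemma drop out as described.
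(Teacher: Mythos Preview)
Your approach is correct and is essentially a fleshed-out version of the paper's own proof, which consists of the single sentence ``This follows from the fact that nullhomotopic maps from spheres extend to disks.'' You have made explicit what that sentence encodes: the extension of $f^+\cup f^-$ over $D^3\cong D^2\times[0,1]$ \emph{is} a homotopy $f^+\heq f^-$ rel $\partial D^2$, once one matches $\partial(D^2\times[0,1])$ with $S^2$ via your map $e$.

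One small slip worth flagging: you justify that $e$ is a homotopy equivalence by saying it ``collapses a contractible subcomplex,'' but the cylinder $\partial D^2\times[0,1]\cong S^1\times[0,1]$ is not contractible. The conclusion is still true --- both source and target are copies of $S^2$ and $e$ has degree $\pm 1$, or alternatively $e$ collapses the cylinder along a deformation retraction onto $S^1$ inside a CW pair --- but more to the point, you do not actually need $e$ to be a homotopy equivalence. Both assertions of the lemma use only the direction ``$f^+\cup f^-$ nullhomotopic $\Rightarrow$ $f^+\heq f^-$ rel $\partial D^2$,'' and for that direction the factorization $\phi=(f^+\cup f^-)\circ e$ already shows $\phi$ is nullhomotopic whenever $f^+\cup f^-$ is, with no hypothesis on $e$ required.
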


This follows from the fact that nullhomotopic maps from spheres extend to disks.

\subsection{The proof of Proposition~\ref{trans-prop}}

We are given simplicial complexes $(X; T)$ and $(S; N)$, subcomplexes $Y\subset X$ and $S'\subset S$, and a map 
$$f\co (S, S') \to (X, X\sm Y)$$
 that is simplicial on $S'$.  We will assume that $S$ is 2--dimensional; the 1--dimensional case is similar but simpler.
By the Simplicial Approximation Theorem, there exists a  triangulation $N'$ of $S$, with $S'$ still a subcomplex, and a simplicial map $f' \co (S; N') \to (X; T)$ such that $f\heq f'$ (rel $S'$) (see the comments after the proof of the Simplicial Approximation Theorem in \cite[Section 2C]{Hatcher}).  
 
The proof  will proceed through several steps.  Let $N_1$ be an $f$--centric subdivision of $N'$.  In Step 1, we use density of $X\sm Y$ to produce a simplicial map $f_1 \co (S; N_1) \to (X; \beta(T))$ such that $f\heq f_1$ (rel $S'$) and for each 2--simplex $\sigma \in N_1$, $f_1(\sigma) \notsubset Y$.
In Step 2, we use the local connectivity condition around points in $Y$ to construct a map $g \co (S; N_1) \to (X; \beta (T))$ such that $g\heq f_1$ (relative to the union of $S'$ with the vertices of $(S;  N_1)$), and $g^{-1} (Y)$ lies in the vertex set of $(S; N_1)$.
  In Step 3, we use the local $\pi_1$--injectivity condition around points in $g(S) \cap Y$ to homotope $g$ off of $Y$.   

\vspace{.2in}

\noindent \underline{{\bf Step 1:}}

Let $N_1$ be an $f$--centric subdivision of $N'$.  For each $2$--simplex $\sigma  \in N'$, let $b(\sigma)$ denote the new vertex in $N_1$ lying in the interior of $\sigma$.  
We will show that $f$ is homotopic (rel $S'$) to a simplicial map $f_1 \co (S; N_1) \to (X; \beta (T))$ such that for each $2$--simplex $\sigma\in N_1$, we have $f_1 (\sigma)\notsubset Y$.

We will define the desired homotopy separately on each 2--simplex $\sigma \in N$.  Since $X\sm Y$ is dense in $X$, for each simplex $\tau \in T|_Y$ we may choose a simplex $\wt{\tau} \in T$ 
 such that $\tau \subset \wt{\tau}$ and  $\wt{\tau} \nsubset Y$.  
 Now, say $\sigma$ is a 2--simplex in $N$ with $f(\sigma) = \tau \in T|_Y$.  Then there exists a (unique) simplicial map $f^\sigma_1 \co (\sigma; N_1|_\sigma) \to (\wt{\tau}, \beta(T)|_{\wt{\tau}})$ sending $b(\sigma)$ to $\beta (\wt{\tau})$ and sending the other vertices in $(\sigma; N_1|_\sigma)$ to their images under $f$,
and this map agrees with $f$ on $\partial \sigma$.  Since $f|_\sigma$ and $f^\sigma_1$ both map into the contractible space $\wt{\tau}$, Lemma~\ref{null} tells us that $f|_\sigma \heq f^\sigma_1$ (rel $\partial \sigma$).  Moreover, $\Inter{\wt{\tau}} \cap Y = \emptyset$, so we have $(f^\sigma_1)^{-1} (Y) \subset \partial \sigma$.

The maps $f^\sigma_1$ paste together to give a simplicial map 
$$f_1 \co (S; N_1) \to (X, \beta (T))$$
 (which we set equal to $f$ on 2--simplices in $N$ that do not map into $Y$), and we have $f_1 \heq f$ (rel $S'$).  Moreover, for each 2--simplex  $\sigma \in N_1$, we have $f_1 (\sigma) \notsubset Y$.

\vspace{.2in}

\noindent \underline{{\bf Step 2:}}

Next, we homotope $f_1$ further (still rel $S'$) to a  map $g\co  S \to X$ such that $g^{-1}(Y)$ is a subset of the vertices of $N_1$.  We note that it will not be necessary to make $g$ a simplicial map.

Consider an edge $e\in N_1$.  We call $e$ \e{bad} if $f_1 (e) \subset Y$.  Since $\partial S \subset S'$, if $e$ is bad, then $e\notsubset \partial S$ and hence there are exactly two 2--simplices $\sigma(e), \tau(e) \in N_1$ containing $e$.  Let $L(e)$ denote the subcomplex $\sigma(e) \cup \tau(e)$ of 
$(S, N_1)$.
Let $\partial L(e)$ be the union of the edges in $L(e)$ other than $e$ (so $\partial L(e)$ is the boundary of $L(e)$ as a topological disk).

We claim that  if $e$ is bad, then $L(e) \cap f_1^{-1}(Y) = e$, and from this it follows that if
$e_1$ and $e_2$ are distinct bad edges, then $L(e_1) \cap L(e_2) \subset \partial L(e_1)\cap \partial L(e_2)$.
To prove the claim, say $e$ is bad.  Let $s$ and $t$ be the two vertices in $L(e)\sm e$.  By Lemma~\ref{full}, $Y$ is a full subcomplex of $(X, \beta (T))$, and by choice of $f_1$ we have $f_1 (\sigma(e)), f_1(\tau(e)) \notsubset Y$.  Hence we must have $f(s), f(t)\notin Y$, and now Lemma~\ref{inv} proves the claim.

We will now define the desired homotopy of $f_1$ separately on the various sets $L(e)$ with $e$ bad.  
For each such $e$, the following lemma will construct a map $g = g(e) \co L(e)\to X$ such that $g(e) \heq f_1|_{L(e)}$ (rel $\partial L(e)$) and $g(e)^{-1} (Y) = \partial e$.
These homotopies then paste together to define a homotopy $f_1 \heq g$ on all of $S$ (constant outside of these sets), and $g^{-1} (Y)$ will automatically lie inside the vertex set of $(S; N_1)$.  

\begin{lem} $\label{bad-edges}$  Let $(X; T)$ be a simplicial complex and let $(L; R)$ be the simplicial complex consisting of two $2$--simplices $\sigma$ and $\tau$ sharing a common edge $e$.  Let $\partial L$ be the union of the edges in $L$ other than $e$.  Let $Y \subseteq X$ be a full subcomplex such that for each $y\in Y$ and each open neighborhood $U$ of $y$ in $X$, there exists an open neighborhood $V \subset U$, with $y\in V$, such that $V\sm Y$ is path connected.
If $f\co (L; R) \to (X; T)$ is a simplicial map such that $f^{-1} (Y) = e$, then $f$ is homotopic $($rel  $\partial L$$)$ to a map $g\co L\to X$ such that $g^{-1} (Y)  = \partial e$.  
\end{lem}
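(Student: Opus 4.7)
I will modify $f$ only inside a thin lens-shaped neighborhood $\bar{N}\subseteq L$ of the open chord $\inter{e}$, keeping $g=f$ on $L\setminus\bar{N}$. The lens $\bar{N}$ is a closed topological disk meeting $\partial L$ only at $\{v_0,v_1\}$, bounded by two arcs $\gamma_+\subset\sigma$, $\gamma_-\subset\tau$ joining $v_0$ to $v_1$ through the interior of each triangle, so $\partial\bar{N}=\gamma_+\cup\gamma_-$ meets only at the two ``cusps'' $v_0,v_1$. The local-connectivity hypothesis on $Y$ is what will let me push $f|_{\inter{e}}$ off of $Y$ and extend continuously across $\bar{N}$; nothing happens to $f$ outside $\bar{N}$.

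\textbf{Construction.} I apply the hypothesis to each point of the compact set $f(e)\subseteq Y$ and extract a finite open cover $V_1,\ldots,V_n$ of $f(e)$ with each $V_i\setminus Y$ path connected. Using the Lebesgue number lemma and uniform continuity of $f$, I subdivide $e$ into small sub-edges $e_1,\ldots,e_m$ and shrink $\bar{N}$ so that, for each $j$, the ``vertical slab'' of $\bar{N}$ lying over $e_j$ maps entirely into a single $V_{i(j)}$. I parametrize $\bar{N}$ as the quotient of $[0,1]\times[-1,1]$ collapsing $\{0\}\times[-1,1]$ and $\{1\}\times[-1,1]$ to $v_0$ and $v_1$ respectively, so that $e=[0,1]\times\{0\}$ and $\gamma_\pm=[0,1]\times\{\pm 1\}$. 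For each $x\in(0,1)$, the endpoints $f(x,\pm 1)$ lie in $V_{i(j(x))}\setminus Y$ (they are in the slab and $(x,\pm 1)\notin e$), so path-connectivity of $V_{i(j(x))}\setminus Y$ supplies a path in that set from $f(x,-1)$ to $f(x,1)$; I declare $g(x,\cdot)$ to be such a path. Continuity in $x$ is arranged by an inductive choice across the $e_j$'s, re-applying the hypothesis at each junction point $a_j$ to shrink inside $V_{i(j-1)}\cap V_{i(j)}$ and match boundary data. Continuity at the cusps is arranged by iterating the hypothesis to obtain nested sequences of neighborhoods of $f(v_0)$ and $f(v_1)$ with diameters tending to zero, then having the slabs nearest a cusp map into these shrinking neighborhoods, forcing $g(x,\cdot)\to f(v_i)$ as $x\to 0,1$. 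By construction $g=f$ on $\partial\bar{N}$ and $g^{-1}(Y)\cap\bar{N}=\{v_0,v_1\}$, so $g^{-1}(Y)=\partial e$ as required.

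\textbf{Homotopy and main obstacle.} For $g\simeq f$ rel $\partial L$, it suffices to homotope $f|_{\bar{N}}$ to $g|_{\bar{N}}$ rel $\partial\bar{N}$. I will use that the open set $U=\st(f(v_0))\cup\st(f(v_1))\subseteq X$ is contractible: it is a union of two contractible open stars whose intersection $\st(f(e))$ is itself contractible, so the Mayer--Vietoris/van Kampen pushout of the three contractible pieces is contractible. Shrinking $\bar{N}$ and the $V_i$'s further (by a further application of the hypothesis performed \emph{inside} $U$) ensures $f(\bar{N}),g(\bar{N})\subseteq U$. Since $U$ is contractible and $(\bar{N},\partial\bar{N})$ is a CW pair, any two maps $\bar{N}\to U$ agreeing on $\partial\bar{N}$ are homotopic rel $\partial\bar{N}$, giving the desired homotopy. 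The main technical obstacle is the continuous choice of slice-paths in the parameter $x$, and especially their extension to the cusps $v_0,v_1$ where the slices degenerate to points; both issues are handled by successive applications of the hypothesis to produce small neighborhoods with path-connected complements of $Y$, together with patching across the finite sub-edge cover.
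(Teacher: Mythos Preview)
There is a genuine gap in your construction of $g$. You propose to declare $g(x,\cdot)$, for each $x\in(0,1)$, to be a path in $V_{i(j(x))}\setminus Y$ from $f(x,-1)$ to $f(x,+1)$, and then assert that ``continuity in $x$ is arranged by an inductive choice across the $e_j$'s.'' But the hypothesis only says each $V_i\setminus Y$ is \emph{path} connected; nothing is assumed about its fundamental group. Over a single slab $S_j=[a_{j-1},a_j]\times[-1,1]$, once you have fixed $g$ on the left edge $\{a_{j-1}\}\times[-1,1]$ as a path in $V_{i(j)}\setminus Y$ and chosen some path on the right edge, extending continuously across $S_j$ with $g=f$ on the top and bottom is exactly the demand that the resulting boundary loop bound a disk in $V_{i(j)}\setminus Y$. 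That is a $\pi_1$ condition, not a $\pi_0$ condition, and it need not hold. No amount of refining the cover, shrinking the lens, or re-applying the hypothesis at the junctions $a_j$ supplies it, since each application only produces another path-connected (not simply connected) set. Note that the $\pi_1$-injectivity hypothesis appearing in the ambient proposition is deliberately \emph{not} among the hypotheses of this lemma; the lemma must go through with $0$-connectivity alone.

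The paper's proof sidesteps this two-dimensional filling problem by exploiting the simplicial structure of $X$ rather than working inside small metric neighborhoods. It applies the hypothesis just once, at $\beta(f(e))$, to get a single $V\subset\st(f(e))$ with $V\setminus Y$ path connected, and then uses that connectedness purely \emph{combinatorially}: a clopen argument shows every point of $V\setminus Y$ is ``reachable'' from $f(\sigma)$ via a finite chain of simplices $\Delta_1,\dots,\Delta_m\in T$, each containing $f(e)$, with $f(\sigma)\subset\Delta_1$, $\Delta_m=f(\tau)$, and $\Delta_i\cap\Delta_{i+1}\not\subset Y$. These simplices assemble into a simplicial fan $D^j_m$ (a topological ball) through which $f$ factors as $p\circ q$; one then replaces $q$ by a map $q'$ sending the interior of $L$ into the interior of the ball, rel $\partial L$. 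Since $p^{-1}(Y)$ is the single edge $[w_0,w_{\dim f(e)}]$, which lies on the \emph{boundary} of the ball, the composite $g=p\circ q'$ satisfies $g^{-1}(Y)=\partial e$. Thus one application of path-connectivity is converted into a finite simplicial object, and no continuous one-parameter family of paths in $V\setminus Y$ is ever required.
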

\begin{proof} 
Let $U\subset X$ be the open set  $\st(f(e))$.  
Since $\beta(f(e)) \in Y$, there exists an open set $V\subset X$ such that $\beta(f(e))\in V \subset U$ and $V\sm Y$ is path connected.  

We will say that a point $r \in V\sm Y$ is \e{reachable} (from $f(\sigma)$) if there exists a sequence of simplices $\Delta_1, \ldots, \Delta_m \in T$ satisfying
\begin{enumerate}[{\bfseries (i)}]
\item $f(\sigma) \subseteq \Delta_1$,
\item $f(e) \subseteq \Delta_i$ for $1\leqs  i\leqs  m$, 
\item $\Delta_i \cap \Delta_{i+1} \nsubset Y$ for $1\leqs  i\leqs  m-1$,
\item $r \in \inter{\Delta}_m$.
\end{enumerate}
Note that condition {\bf (iii)} is vacuous when $m=1$.
 
We now argue that all points in $V\sm Y$ are reachable.
Note that  $V\cap f(\sigma) \neq \emptyset$ (since $\beta(f(e))\in V\cap f(\sigma)$),  so openness of $V$ implies that
$\Inter {f(\sigma)}\cap V \neq \emptyset$.  Moreover, each point in $\Inter {f(\sigma)}\cap V$ is reachable, by setting $m=1$ and $\Delta_1 = f(\sigma)$.  So the set of reachable points is non-empty.

Next, we check that the set of reachable points is open.  If $r\in V\sm Y$ is reachable, then there exists a sequence of simplices $\Delta_1, \ldots, \Delta_m$ as above.  Since $r\in \inter{\Delta}_m \subset \st(\Delta_m)$,  to prove openness it will suffice to show that all points in $(V\sm Y) \cap \st(\Delta_m)$ are reachable.  If $x\in (V\sm Y) \cap \st(\Delta_m)$, then $x\in \inter{\Delta}_{m+1}$ for some simplex $\Delta_{m+1} \in T$ with $\Delta_m \subset \Delta_{m+1}$.  We claim that the sequence $\Delta_1, \ldots, \Delta_m, \Delta_{m+1}$ 
satisfies the above conditions, thereby proving $x$ is reachable.
We must check that $\Delta_m \cap \Delta_{m+1} \nsubset Y$; all the other conditions are immediate from our choices of $\Delta_1, \ldots, \Delta_{m+1}$.  Note that $\Delta_m \cap \Delta_{m+1} = \Delta_m$.  
If $m = 1$, then by {\bf (i)} we have $f(\sigma) \subset \Delta_m$, so $\Delta_m \nsubset Y$; on the other hand, if $m > 1$, then $\Delta_{m-1} \cap \Delta_m \nsubset Y$, so $\Delta_m \nsubset Y$.

Finally, we show that  the set of non-reachable points in $V\sm Y$ is also open.  Since $V\sm Y$ is (path) connected, this will imply that all points in $V\sm Y$ are reachable.  Say $v\in V\sm Y$ is not reachable.  Since $v\in V\subset U = \st(f(e))$, we have $v\in \inter{\Delta}$ for some simplex $\Delta \in T$ with $f(e) \subset \Delta$.  We claim that no point in $(V\sm Y) \cap \st (\Delta)$ is reachable; this will suffice since this open set contains $v$.  If $w\in (V\sm Y) \cap \st (\Delta)$ were reachable, then we would have a sequence $\Delta_1, \ldots, \Delta_m$ as above, with $w \in \inter{\Delta}_m$.  Since $w\in \st (\Delta)$, we have $\Delta \subset \Delta_m$.  We claim that the sequence $\Delta_1, \ldots, \Delta_m, \Delta$ shows that $v$ is reachable, a contradiction.  
All that remains to be checked is that $\Delta_m \cap \Delta \nsubset Y$.  
But $\Delta_m \cap \Delta = \Delta$, and $\Delta\nsubset Y$ because $v\in \Delta$ and $v\in V\sm Y$.

To complete the proof of the lemma, we will need to introduce some auxiliary simplicial complexes.  Let $D^2_m$ be 
geometric realization (see \cite[Section 3]{Munkres-EAT}) of  the abstract 2--dimensional simplicial complex with $m+2$ vertices $w_0, v_1, \ldots, v_{m+1}$, and 2--simplices $[w_0, v_i, v_{i+1}]$ for $i = 1, \ldots, m$.  The edges in $D^2_m$ are simply the faces of these 2--simplices.
Let $D^3_m$ be the geometric realization of  the abstract 3--dimensional simplicial complex with $m+3$ vertices $w_0, w_1, v_1, \ldots, v_{m+1}$, and 3--simplices $[w_0, w_1, v_i, v_{i+1}]$ for $i = 1, \ldots, m$.  The 1-- and 2--simplices in $D^3_m$ are simply the faces of these 3--simplices.
Note that $D^2_m$ is homeomorphic to a topological 2--disk, and $D^3_m$ is homeomorphic to a topological 3--disk.  In particular, both spaces are contractible.

Now, $\beta (f(e)) \in V\cap f(\tau)$,  so 
$\Inter {f(\tau)}\cap V \neq \emptyset$.  Given $x\in \Inter {f(\tau)}\cap V$, we know that $x$ is reachable, so there exists a  sequence of simplices $\Delta_1, \ldots, \Delta_m \in T$ satisfying the above properties {\bf (i)}-{\bf(iv)}.   
Note that by {\bf(iv)}, we have $\Inter{f(\tau)} \cap \Inter{\Delta_m}\neq \emptyset$, so $\Delta_m = f(\tau)$.

Let $e = [\epsilon_0, \epsilon_1]$, $\sigma = [\epsilon_0, \epsilon_1, s]$, and $\tau = [\epsilon_0, \epsilon_1, t]$.
Let $v_1' = f(s)$ and let $v_{m+1}' = f(t)$, and note that $v_1' \in \Delta_1$ by {\bf (i)}, and $v_{m+1}' \in \Delta_m = f(\tau)$.
Since $Y$ is a full subcomplex of $X$,  {\bf (iii)} implies that there exist vertices $v'_{i} \in \Delta_{i-1} \cap \Delta_{i}$ ($i=2, \ldots, m$) such that $v'_i\notin Y$.  Since $f^{-1} (Y) = e$, we also have $v_1', v_{m+1}'\notin Y$.
 Let $j = \dim f(e) + 2$.  
 For $i = 1, \ldots, m$, we have $f(e) \cup \{v'_i, v'_{i+1}\} \subset \Delta_i$ (here we are using {\bf (ii)}), so there exists a (unique) simplicial map $p\co D^j_m \to (X, T)$ defined by $p (w_i) = f(\epsilon_i)$ and $p(v_i) = v_i'$.  
Also, we have a simplicial map $q\co L \to D^j_m$ defined by $q (\epsilon_i) = w_i$, $q(s) = v_1$, and $q(t) = v_{m+1}$.  Now   $f|_{L}\co L\to X$ factors as $p\circ q$.

Since $D^j_m$ is a topological disk, the loop $q(\partial L)$ (which lies on the boundary of this disk) is nullhomotopic via a homotopy that lies in the interior of $D^j_m$ except at time 0 (for instance, choosing a homeomorphism between  $D^j_m$ and the unit disk $D^j \subset \bbR^{j}$, we can radially contract $q(\partial L)$ to the origin).  This homotopy gives a map $q'\co L \isom D^2 \to D^j_m$, which (by Lemma~\ref{null}) is homotopic to $q$ (rel $\partial L$).  We note that $(q')^{-1} ([w_0, w_{\dim f(e)}]) = \partial e$.

Now we have $f = p\circ q \heq p \circ q'$ (rel $\partial L$), and to complete the proof it will suffice to check that $p\circ q'$ satisfies $(p\circ q')^{-1} (Y) = \partial e$.  We have $(p\circ q')^{-1} (Y) = (q')^{-1} \left(p^{-1} (Y)\right)$, so it is enough to show that $p^{-1} (Y)  = [w_0, w_{\dim f(e)}]$.  But $p(v_i) = v_i'\notin Y$ by choice of $v_i'$, and so the only vertices of $D^j_m$ that map to $Y$ under $p$ are those of the form $w_i$.  
Applying Lemma~\ref{inv} completes the proof.
\end{proof}

\noindent \underline{{\bf Step 3:}}

At this point, we have produced a map $g$, homotopic to $f$ (rel $S'$), such that $g^{-1} (Y)$ consists only of vertices in $(S; N_1)$.  Let $W$ be the set of vertices in $N_1$ that map to $Y$ under $g$, and note that $W\cap S' = \emptyset$.  
For each $w\in W$, our hypotheses on $X$ state that there exists a path connected open neighborhood $V_w$ of $g(w)$ such that $\pi_2 (V_w) = 0$ and for all $v\in V_w\sm Y$, the natural map $\pi_1 (V_w\setminus Y, v) \to \pi_1 (V_w,v)$ is injective.  For each $w\in W$, choose a Euclidean neighborhood $U_w \subset S$ containing $w$.  Then there exists a disk $D_w \subset U_w \cap  g^{-1} (V_w)$, centered at $w$, such that   $D_w$ is disjoint from $S'$ and does not contain any vertex of $N_1$ other than $w$.  Note that by shrinking these disks if necessary, we may assume that they are disjoint.  Now $g(D_w \sm \{w\}) \subset V_w \sm Y$, so $\partial D_w$ maps under $g$ to a loop in $V_w\sm Y$ that is nullhomotopic in $V_w$.  For each $z\in \partial D_w$, we have $\pi_1 (V_w \sm Y, g(z)) \injects \pi_1 (V_w , g(z))$, so each of these loops is in fact nullhomotopic in $V_w \sm Y$, and this nullhomotopy defines a map $h_w \co D_w \to V_w \sm Y$ such that $h_w|_{\partial D_w} = g|_{\partial D_w}$.  Since $\pi_2 (V_w) = 0$, Lemma~\ref{null} implies that $h_w 
\heq g|_{D_w}$ (rel $\partial D_w$).  We can now define a new map $h\co S \to X$ by replacing $g$ with $h_w$ on each $D_w$, and we have $h\heq g$ (rel $S'$), since $S'\cap \left(\bigcup_{w\in W} D_w\right) = \emptyset$.
Since $h(S) \subset X\sm Y$, this completes the proof of Proposition~\ref{trans-prop}.
$\hfill \Box$

\def\cdprime{$''$} \def\cprime{$'$} \def\cprime{$'$} \def\cprime{$'$}
\providecommand{\bysame}{\leavevmode\hbox to3em{\hrulefill}\thinspace}
\providecommand{\MR}{\relax\ifhmode\unskip\space\fi MR }
\providecommand{\MRhref}[2]{%
  \href{http://www.ams.org/mathscinet-getitem?mr=#1}{#2}
}
\providecommand{\href}[2]{#2}

\end{document}